\newtheorem{lemmaAM}{Lemma}[section]
\newtheorem{corollaryAM}[lemmaAM]{Corollary}
\newtheorem{theoremAM}[lemmaAM]{Theorem}
\newtheorem{propositionAM}[lemmaAM]{Proposition}
\theoremstyle{definition}
\newtheorem{definitionAM}[lemmaAM]{Definition}
\newtheorem{consAM}[lemmaAM]{Construction}
\newtheorem{exampleAM}[lemmaAM]{Example}
\newtheorem{questionAM}{Question}
\theoremstyle{plain} 
\newtheorem{theorem}[subsection]{Theorem}
\newtheorem{lemma}[subsection]{Lemma}
\newtheorem{prop}[subsection]{Proposition}
\theoremstyle{definition}
\newtheorem*{remark*}{Remark}
\newtheorem*{theorem*}{Theorem}
\newtheorem*{defn*}{Definition}
\newtheorem*{lemma*}{Lemma}
\newtheorem*{corollary*}{Corollary}
\newtheorem*{conjecture*}{Conjecture}
 \newtheorem*{prop*}{Proposition}
 \newtheorem*{example*}{Example}
\newcommand{\cA}{\mathcal{A}}
\newcommand{\cC}{\mathcal{C}}
\newcommand{\cE}{\mathcal{E}}
\newcommand{\cF}{\mathcal{F}}
\newcommand{\cH}{\mathcal{H}}
\newcommand{\cK}{\mathcal{K}}
\newcommand{\cL}{\mathcal{L}}
\newcommand{\cO}{\mathcal{O}}
\newcommand{\cP}{\mathcal{P}}
\newcommand{\cS}{\mathcal{S}}
\newcommand{\bA}{\mathbf{A}}
\newcommand{\bC}{\mathbf{C}}
\newcommand{\bE}{\mathbf{E}}
\newcommand{\bF}{\mathbf{F}}
\newcommand{\bQ}{\mathbf{Q}}
\newcommand{\bZ}{\mathbf{Z}}
\newcommand{\Mod}{\mathrm{Mod}}
\newcommand{\Fun}{\mathrm{Fun}}
\newcommand{\Hom}{\mathrm{Hom}}
\newcommand{\Maps}{\mathrm{Maps}}
\newcommand{\LMod}{\mathrm{LMod}}
\newcommand{\Proj}{\mathrm{Proj}}
\newcommand{\Free}{\mathrm{Free}}
\newcommand{\Res}{\mathrm{Res}}
\newcommand{\Ind}{\mathrm{Ind}}
\newcommand{\KU}{\mathbf{K}}
\newcommand{\GL}{\mathrm{GL}}
\newcommand{\Ext}{\mathrm{Ext}}
\newcommand{\Cent}{\mathit{Z}}
\newcommand{\op}{\mathrm{op}}
\newcommand{\trace}{\mathrm{trace}}
\newcommand{\Vect}{\mathrm{Vect}}
\newcommand{\conj}{\mathrm{conj}}
\numberwithin{equation}{subsection}
\newcommand{\gl}{\mathrm{gl}}
\newcommand{\Gal}{\mathrm{Gal}} 
\newcommand{\KF}{K\!\cF}
\newcommand{\KP}{K\!\cP} 
\title{Representations of finite groups on modules over $K$-theory}
\author[David Treumann]{David Treumann, with an appendix by Akhil Mathew}
\begin{document}
 
 \maketitle
 
 \begin{abstract} 
Let $G$ be a finite group, and let $\KU_p$ denote the completion at $p$ of the complex $K$-theory spectrum.  $\KU_p$ is a commutative ring spectrum that in some ways is very similar to the usual ring $\bZ_p$ of $p$-adic integers.  We discuss $G$-actions on $\KU_p$-modules, and propose to study them by analogy with the classical theory of modular representations of $G$.  
\end{abstract}

\tableofcontents

\section{Introduction}

Let $\KU$ denote the complex $K$-theory spectrum.  It is a commutative ring spectrum and we may consider modules over it.  In this paper, we tweak some definitions in representation theory, replacing abelian groups by $\KU$-modules, to see what happens.  This is a reasonable experiment:

\begin{prop*}
The category of $\KU$-modules and homotopy classes of maps between them is equivalent to the $2$-periodic derived category of abelian groups.  The full subcategory of ``even'' $\KU$-modules --- those whose odd homotopy groups vanish --- is equivalent to the category of abelian groups and group homomorphisms between them.
\end{prop*}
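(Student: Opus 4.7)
The plan is to exploit the fact that $\pi_*\KU = \bZ[\beta^{\pm 1}]$, with $|\beta|=2$, is a graded ring of graded global dimension at most one --- it is the Bott-localization of the polynomial ring $\bZ[\beta]$, whose coefficient ring $\bZ$ is a PID. Under this hypothesis, the homotopy category of $\KU$-modules should be as close as possible to the derived category of graded $\pi_*\KU$-modules, and the latter is easy to describe: since $\beta$ acts invertibly, a graded $\pi_*\KU$-module is the same datum as a $\bZ/2$-graded abelian group, and its derived category is by definition the $2$-periodic derived category of abelian groups.

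The main technical tool is a universal coefficient spectral sequence
$$E_2^{s,t} = \Ext^{s,t}_{\pi_*\KU}\bigl(\pi_* M,\, \pi_* N\bigr)\ \Longrightarrow\ [M,\, N]_{\KU},$$
constructed in the standard way by resolving $M$ through free $\KU$-modules. Because the graded global dimension is at most one, the $E_2$ page lives only in the columns $s=0,1$, and the spectral sequence degenerates to a short exact sequence
$$0 \to \Ext^1_{\pi_*\KU}\bigl(\pi_* M,\, \pi_* N[1]\bigr) \to [M, N]_{\KU} \to \Hom_{\pi_*\KU}\bigl(\pi_* M,\, \pi_* N\bigr) \to 0,$$
where $[1]$ denotes a shift in internal grading. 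Full faithfulness of a realization functor from the $2$-periodic derived category of abelian groups into the homotopy category of $\KU$-modules follows from this sequence applied compatibly across all shifts. Essential surjectivity is obtained by taking any graded $\pi_*\KU$-module $A$, choosing a length-one free resolution $0\to F_1\to F_0\to A\to 0$, realizing $F_0,F_1$ as free $\KU$-modules, and forming the cofiber of a $\KU$-module map $F_1\to F_0$ that lifts the resolution; the SES identifies this cofiber up to equivalence.

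The second statement is then an easy consequence: if $\pi_1 M = \pi_1 N = 0$, then by $2$-periodicity $\pi_*M$ and $\pi_*N$ are concentrated in even degrees, so $\pi_*N[1]$ is odd-concentrated and graded $\Hom$ and $\Ext^1$ from $\pi_*M$ into $\pi_*N[1]$ both vanish. The short exact sequence thus collapses to $[M,N]_{\KU}\cong \Hom_\bZ(\pi_0 M, \pi_0 N)$, and essential surjectivity follows from the cofiber construction applied to a length-one resolution of any abelian group $A$ over $\bZ$. The main obstacle is the careful setup and convergence analysis of the universal coefficient spectral sequence, but this is a standard argument given the graded global dimension one hypothesis, directly analogous to the classical universal coefficient theorem.
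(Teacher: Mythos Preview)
Your proposal is correct and essentially the same as the paper's argument (Proposition~\ref{thm:coefficients}): both use that $\pi_*\KU \cong \bZ[\beta^{\pm 1}]$ has graded global dimension one (since $\bZ$ is a PID) to obtain two-term free resolutions, and then read off the $\Hom$/$\Ext^1$ description of morphism groups from the resulting long exact sequence. The paper carries this out by hand --- first proving explicitly that every module splits as $M_0 \oplus \Sigma M_1$ with $M_i$ even, then computing $[M,N]$ and $[M,\Sigma N]$ directly from the cofiber sequence $F_1 \to F_0 \to M$ --- whereas you package the identical computation as the degeneration of the universal coefficient spectral sequence.
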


A richer structure appears if we do not pass to homotopy classes: $\KU$-modules form an $\infty$-category.  That is, given $\KU$-modules $M$ and $N$ there is a space $\Maps(M,N)$ of homomorphisms between them.  The proposition describes the structure of the set of connected components of $\Maps(M,N)$, denoted $[M,N]$.  Each of these components is an infinite-dimensional space --- a well-studied one in algebraic topology, closely related to the stable unitary group $\mathrm{U} := \bigcup_n \mathrm{U}(n)$ and its classifying space $B \mathrm{U}$.  

We will study representations of a finite group $G$ on $\KU$-modules --- let us call them $\KU[G]$-modules.  The $\infty$-categorical structure plays a big role: by definition, a $G$-action on a $\KU$-module $M$ is a basepoint-preserving map $BG \to B\mathrm{Aut}(M)$.  The category of such things, and homotopy classes of maps between them, forms a triangulated category very different from the derived category of $\bZ[G]$-modules.  Our goal is to compare $\bZ[G]$-modules to $\KU[G]$-modules, and perhaps to take advantage of the differences.

\subsection{Motivation and Atiyah's theorem}
\label{intro:Atiyah}
I am looking for a representation-theoretic format to take advantage of the fact that, for the classifying space of a finite group, it is easier to compute $K$-theory than cohomology.  The determination of $H^*(BG)$ can be cast as a representation-theoretic computation --- it is the ring of (homologically shifted) endomorphisms of the trivial $G$-module.  Indeed this ring plays an important role in representation theory, giving a notion of support of $G$-modules, e.g. \cite{AlperinEvens,AvruninScott}.  But it is difficult to compute.

Meanwhile Atiyah has given \cite{Atiyah} a beautiful description of the $K$-theory of $BG$:
\begin{equation}
\label{eq:atiyah}
\KU^0(BG) = R(G)_{I} \qquad \KU^1(BG) = 0
\end{equation}
where $R(G)$ denotes the Grothendieck ring of complex representations of $G$, and $R(G)_{I}$ denotes its completion at the augmentation ideal.  In particular \cite[Lem. 8.3]{Atiyah},  $\KU^*(BG)$ is torsion-free and contains no nilpotent elements.

Thus, the endomorphism ring of the trivial $\KU[G]$-module is easier to compute than the endomorphism ring of the trivial $\bZ[G]$-module.  Possibly other computations for $\KU[G]$-modules are also easier.  But let us show that the theory is neither trivial, nor unrelated to classical modular representation theory.

\subsection{Blocks}
\label{intro:blocks}
An abelian group can be reduced mod a prime $p$, or completed at a prime $p$, yielding $\bF_p$- and $\bZ_p$-modules.  These operations break the category of $G$-modules into ``blocks.''  Let us quickly give a definition of block here: consider the graph whose vertices are the indecomposable representations of $G$ on $\bZ_p$-modules, and whose edges are nonzero maps.  A block is a connected component of this graph.  One gets the same set of blocks (at least, the components of the graph are canonically identified) if one uses $\bF_p$-modules in place of $\bZ_p$-modules, or if one considers complexes of modules up to shift.   

There is a good $K$-theoretic analog of $\bZ_p$, called $p$-complete $K$-theory, which we denote by $\KU_p$.  There is also an analog of $\bF_p$, called mod $p$ $K$-theory, but the analogy is more problematic: for instance it is not commutative when regarded as a ring spectrum.  So let us focus  on $\KU_p$.  Considering $\KU_p[G]$-modules, one gets a similar notion of blocks.  It is easy to deduce the following:

\begin{theorem*}
The blocks of the categories of $\KU_p[G]$-modules and of $\bZ_p[G]$-modules are in natural one-to-one correspondence.  
\end{theorem*}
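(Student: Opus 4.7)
The plan is to show that both collections of blocks are in canonical bijection with the primitive central idempotents of $\bZ_p[G]$, by producing a block decomposition on the $\KU_p[G]$-side that mirrors the classical one.

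First, since $G$ is discrete and $\pi_0 \KU_p = \bZ_p$, one has $\pi_0 \KU_p[G] = \bZ_p[G]$. In any ring spectrum $R$, an idempotent $f \in \pi_0 R$ splits $R$ as an $R$-module, because multiplication by $f$ is an idempotent endomorphism of $R$ in the homotopy category of $R$-modules, which is triangulated with countable coproducts and hence idempotent-complete. Applying this to the primitive central idempotents $e_1, \dots, e_n$ of $\bZ_p[G]$ (which are automatically central in $\KU_p[G]$) produces a splitting $\KU_p[G] \simeq \prod_i e_i \KU_p[G]$, and hence a decomposition of $\KU_p[G]$-modules into pieces $\cM_1, \dots, \cM_n$ paralleling the classical block decomposition of $\bZ_p[G]$-modules. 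Nonzero maps occur only within a single piece, so on each side the blocks refine this decomposition; it suffices to check that each $\cM_i$ is a single block.

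Fix $i$, write $e = e_i$, and let $f_1, \dots, f_k$ be a complete set of orthogonal primitive idempotents of $\bZ_p[G]$ summing to $e$. Classically, the $P_j := \bZ_p[G] f_j$ are the indecomposable projective left $\bZ_p[G]$-modules in block $e$, and they are pairwise connected by nonzero maps by definition of blockness. Lifting the $f_j$ by the same idempotent-splitting mechanism gives left $\KU_p[G]$-module summands $\tilde P_j := \KU_p[G] f_j$ of $e \KU_p[G]$; each is indecomposable because $\pi_0 \End(\tilde P_j) = f_j \bZ_p[G] f_j$ is local. By Yoneda, $\pi_0 \Maps_{\KU_p[G]}(\tilde P_j, M) = f_j \pi_0 M = \Hom_{\bZ_p[G]}(P_j, \pi_0 M)$ for any $\KU_p[G]$-module $M$, with a parallel statement for $\pi_1$ via $\Sigma M$. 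Taking $M = \tilde P_{j'}$ lifts the classical connecting maps to nonzero $\tilde P_j \to \tilde P_{j'}$. If $M$ is any nonzero indecomposable in $\cM_e$, then $2$-periodicity forces $\pi_0 M$ or $\pi_1 M$ to be a nonzero module over $e \bZ_p[G]$, which therefore receives a nonzero classical map from some $P_j$; this lifts via the formula above to a nonzero map $\tilde P_j \to M$ or $\tilde P_j \to \Sigma M$, placing $M$ in the same block as the $\tilde P_j$'s. Hence $\cM_e$ is a single block.

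The main hurdle is genuinely just bookkeeping: the lifting of idempotents in a ring spectrum, the Yoneda identification of $\pi_0 \Maps$ out of projective summands, and the appeal to $2$-periodicity to guarantee that every nonzero $\KU_p[G]$-module is detected in $\pi_0$ or $\pi_1$. This matches the paper's claim that the theorem is easy to deduce, as the argument uses no structural input beyond the classical block decomposition of $\bZ_p[G]$.
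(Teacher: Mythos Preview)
Your argument is correct and follows essentially the same strategy as the paper's proof in \S\ref{subsec:blocks}: lift the primitive central idempotents of $\bZ_p[G]=\pi_0\KU_p[G]$ to split $\LMod(\KU_p[G])$, then show each summand is indecomposable by observing that every nonzero object receives a nonzero map from a lifted indecomposable projective (your Yoneda formula $[\tilde P_j,M]\cong f_j\,\pi_0 M$ is exactly the content of the paper's \S\ref{subsec:cov}). The one imprecision---inferring from a nonzero map $\tilde P_j\to\Sigma M$ that $M$ lies in the same block as $\tilde P_j$---is harmless once you note, as the paper implicitly does, that the summands in a decomposition of a \emph{stable} $\infty$-category are automatically closed under $\Sigma$.
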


We give a proof in \S\ref{subsec:blocks} based on the  observation that the category of $\KU_p[G]$-modules has a big chunk familiar from ordinary representation theory --- the homotopy category of \emph{projective} $\KU_p[G]$-modules is equivalent to the category of projective $\bZ_p[G]$-modules.  The proof does not appeal to any special feature of $K$-theory, except that its coefficient groups are a complete local domain that vanishes in odd degrees.

\subsection{Kuhn's theorem}
\label{intro:Kuhn}
Kuhn gives a variant of Atiyah's computation \eqref{eq:atiyah}.  If we replace $\KU$ with $\KU_p$, then $\KU_p^*(BG)$ is naturally isomorphic to the quotient of $\bZ_p \otimes R(G)$ by the ideal of virtual representations that vanish when restricted to a $p$-Sylow subgroup of $G$.  (Let us call this the ``Kuhn ideal'' of $\bZ_p \otimes R(G)$.)  This is a free $\bZ_p$-module of rank equal to the number of conjugacy classes of $p$-power order in $G$.  Kuhn proves this in \cite{Kuhn} for mod $p$ $K$-theory, the $p$-adic version important for us is proved in \cite[Prop. 9.7]{Strickland}.

It will be useful to have a name for the following variation on the notion of the Kuhn ideal: if $\cE$ is a virtual $G$-equivariant vector bundle on a $G$-space $X$, we will say that $\cE$ is ``Kuhn-trivial'' if each stalk $\cE_x$, regarded as a virtual representation of the isotropy group $G_x$, belongs to the Kuhn ideal of $R(G_x) \otimes \bZ_p$.

\subsection{Roots of unity}
In representation theory in any characteristic it is usually desirable to work with coefficients that contain sufficiently many roots of unity.  If $\bF_q$ is an extension of $\bF_p$ and $\bZ_q \supset \bZ_p$ are the Witt vectors of $\bF_q$, then $\bF_q[G]$ and $\bZ_q[G]$ have the same blocks.

The ring $\bZ_q$ is obtained by adding $(q-1)$th roots of unity to $\bZ_p$.  An analogous construction yields an extension $\KU_q$ over $\KU_p$.  (It is the ``first Morava $E$-theory'' attached to $\bF_q$.)  One can replace $\KU_p$ with $\KU_q$, and analogs of the results above continue to hold: the homotopy category of $\KU_q$-modules is the $2$-periodic derived category of $\bZ_q$-modules, and the blocks of $\KU_q[G]$-modules are in bijection with the blocks of $\bZ_q[G]$-modules (and with the blocks of $\bF_q[G]$-modules). 

It is important that $\bZ_q$ contains no $p$th roots of unity: a standard argument using the power operations of \cite{Hopkins} shows that it is impossible to adjoin them to $\KU_p$.  In fact there is a sense in which $\KU_p$ is already the maximal $p$-cyclotomic extension of the $K(1)$-local sphere spectrum --- this is one formulation of the Adams conjecture.

\subsection{New symmetries of modular representation theory}
\label{intro:newsymmetries}
We will write $\LMod(\KU_q[G])$ for the stable $\infty$-category of $\KU_q[G]$-modules, and $\LMod(\KU_q[G])^{\mathit{ft}}$ for its full subcategory of modules of ``finite type'', i.e. $\KU_q[G]$-modules whose homotopy groups are finitely generated over $\bZ_q$.

We prove the following in \S\ref{subsec:Koszul}:

 \begin{theorem*}
 Let $G$ be a finite commutative $p$-group, and let $G^{\sharp} = \Hom(G,\bC^*)$ denote its Pontrjagin dual.  Then there is a canonical equivalence
 \begin{equation}
 \label{eq:K-Koszul}
\LMod(\KU_q[G])^{\mathit{ft}} \cong \LMod(\KU_q[G^\sharp])^{\mathit{ft}}
 \end{equation}
 \end{theorem*}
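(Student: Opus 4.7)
My plan is spectral Morita theory centered on the trivial $\KU_q[G]$-module $\KU_q$: compute its endomorphism $E_\infty$-ring, identify it with $\KU_q[G^\sharp]$, and deduce the equivalence from the Schwede--Shipley--Lurie Morita theorem. The functor realizing the equivalence is
$$\Phi := \Maps_{\KU_q[G]}(\KU_q, -) = (-)^{hG}\colon \LMod(\KU_q[G]) \longrightarrow \LMod\bigl(\End_{\KU_q[G]}(\KU_q)\bigr),$$
whose target will be identified with $\LMod(\KU_q[G^\sharp])$ once the ring identification is in place (using commutativity of $G^\sharp$ to ignore the left/right distinction).

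The first task is to identify $R := \End_{\KU_q[G]}(\KU_q) \simeq \KU_q^{BG}$ with $\KU_q[G^\sharp]$ as $E_\infty$-$\KU_q$-algebras. I would construct a natural comparison map $\alpha\colon \KU_q[G^\sharp] \to \KU_q^{BG}$ by using that each character $\chi \in G^\sharp = \Hom(G, \bC^*)$ classifies a complex line bundle $L_\chi$ on $BG$, hence a unit in $\pi_0 \KU_q^{BG}$; multiplicativity $L_{\chi \cdot \chi'} \simeq L_\chi \otimes L_{\chi'}$ assembles these units into an $E_\infty$-$\KU_q$-algebra map $\alpha$. By Atiyah's completion theorem (in its $\bZ_q$-version), $\pi_0 \KU_q^{BG} = (R(G) \otimes \bZ_q)^{\wedge}_I$; for a finite commutative $p$-group one has $R(G) = \bZ[G^\sharp]$, and a direct computation (using that $y f(y) = 0$ expresses $p$ in terms of higher powers of $y = \chi - 1$) shows $\bZ[G^\sharp]^{\wedge}_I = \bZ_p[G^\sharp]$, so $\pi_0 R = \bZ_q[G^\sharp]$ with $\alpha$ inducing the identity. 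Since both source and target are $2$-periodic as $\KU_q$-modules, $\alpha$ is an equivalence.

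With the ring identification in hand, Morita theory yields that $\Phi$ is fully faithful on the thick subcategory of $\LMod(\KU_q[G])$ generated by $\KU_q$, and the theorem reduces to showing that this thick subcategory is exactly $\LMod(\KU_q[G])^{\mathit{ft}}$ (together with the analogous statement on the dual side). This is the main obstacle. Compactness of $\KU_q$ as a $\KU_q[G]$-module is underwritten by the vanishing of $K(1)$-local Tate cohomology for finite $p$-groups (Greenlees--Sadofsky). For thick generation of the finite-type subcategory, I would argue by induction on $|G|$ via a central cyclic subgroup $Z \cong \bZ/p$: Postnikov-decompose $M \in \LMod(\KU_q[G])^{\mathit{ft}}$ to reduce to homotopy concentrated in a single degree, and then use the classical filtration of finitely generated $\bZ_q[Z]$-modules by trivial subquotients to express $M$ as successive cones on finite-type $\KU_q[G/Z]$-modules pulled back along $G \to G/Z$, to which the inductive hypothesis applies. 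Preservation of the finite-type condition by $\Phi$ and its quasi-inverse is then verified using the completion theorem to control the homotopy fixed-point spectral sequence on both sides.
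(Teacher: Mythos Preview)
Your strategy via Morita theory is a genuinely different route from the paper's, and the first half is solid: the identification $\End_{\KU_q[G]}(\KU_q) \simeq \KU_q^{BG} \simeq \KU_q[G^\sharp]$ as $E_\infty$-$\KU_q$-algebras is correct, and your construction of the comparison map $\alpha$ via line bundles is clean. The paper instead builds an explicit invertible $(G,G^\sharp)$-bimodule $M_E$ from the cocycle $E(g,\phi,h,\psi)=\phi(h)$ in $H^2(B(G\times G^\sharp);\bC^*)$ and checks directly that the $p$-completed tensor products $M_{E_{G^\sharp}}\otimes_{\KU_q[G]} M_{E_G}$ and $M_{E_G}\otimes_{\KU_q[G^\sharp]} M_{E_{G^\sharp}}$ are the diagonal bimodules, using Greenlees--Sadofsky to convert $(-)_{hG}$ into $(-)^{hG}$. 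The two approaches are related: under the paper's equivalence the trivial module $\KU_q$ corresponds to the regular module $\KU_q[G^\sharp]$, so your ``tilting object'' picture is exactly what the bimodule encodes.

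The gap is in your generation argument. You propose to filter $\pi_0 M$ by $Z$-trivial subquotients and lift this to a filtration of $M$ by objects pulled back from $G/Z$. Both halves of this fail in general. First, realizing a subquotient $(\pi_0 M)^Z$ as $\pi_0$ of an even object of $\LMod(\KU_q[G/Z])$ is precisely the even realization problem, which the paper shows (\S3.8, adapting Carlsson) can fail already for $(\bZ/2)^3$. Second, even granting a realization $N$, lifting the inclusion $\pi_0 N \hookrightarrow \pi_0 M$ to a map $N\to M$ in $\LMod(\KU_q[G])$ requires the edge map $\pi_0(M^{hZ})\to (\pi_0 M)^Z$ to be surjective, which is obstructed by differentials in the homotopy fixed point spectral sequence. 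Mathew's appendix handles exactly this difficulty for $G=\bZ/p$ with a careful HFPSS analysis (Proposition~A.14, Lemma~A.18), and even then proves generation only by the pair $\{\KU_p,\KU_p[\bZ/p]\}$, not by $\KU_p$ alone.

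In fact the statement you need --- that $\KU_q$ thickly generates $\LMod(\KU_q[G])^{\mathit{ft}}$ --- is a \emph{corollary} of the theorem rather than an available input: once the equivalence is established, $\KU_q$ corresponds to the regular module $\KU_q[G^\sharp]$, which visibly generates. The paper's bimodule argument sidesteps generation entirely by verifying invertibility directly. If you want to salvage the Morita approach, you would need an independent proof that $(-)^{hG}$ is conservative on $L_{\hat p}\LMod(\KU_q[G])$; an induction through a central $\bZ/p$ reduces this to the base case $M^{h\bZ/p}=0\Rightarrow M=0$ for $p$-complete $M$, but that base case still requires work beyond what you have sketched.
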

 Of course, $G$ and $G^\sharp$ are noncanonically isomorphic, but the equivalence we have in mind is not induced by any such isomorphism.  Another way to put this is that (once an isomorphism $G \cong G^{\sharp}$ is fixed), the category of $\KU_q[G]$-modules has an exotic automorphism that the category of $\bZ_q[G]$-modules does not have.

A sample computation in \S\ref{subsec:GKM}, which is formally similar to one of the results of \cite{GKM}, illustrates the equivalence.  If $X$ is a space with a $G$-action, then $\KU_q[X]:= \KU_q \wedge (\Sigma^\infty X_+)$ is a $\KU_q[G]$-module in a natural way.  We call it a ``transformation module,'' or a ``permutation module'' if $X$ is a finite set.  If $\KU_q[X]$ has finite type, then the image of $\KU_q[X]$ under \eqref{eq:K-Koszul} is $L_{\hat{p}}\KU_q[X_{hG}]$, the $p$-completion of the $K$-theory of the Borel construction.  The Pontrjagin dual acts by twisting $G$-equivariant vector bundles on $X$ by homomorphisms $G \to \bC^*$.

\subsection{Borel versus Bredon}
\label{intro:borelbredon}

A $G$-action on a spectrum should represent a $G$-equivariant cohomology theory.  There are two points of view on equivariant cohomology, and two corresponding points of view of $G$-spectra.  One point of view is Borel's: if $X$ is a $G$-space and $\bA$ is an extraordinary cohomology theory, then the $G$-equivariant $\bA$-cohomology of $X$ is the $\bA$-cohomology of the Borel construction $X_{hG}$.  From this point of view, a $G$-action on a spectrum (or a $\KU$-module spectrum, or another kind of object from algebraic topology) is a functor from $BG$ to the $\infty$-category of spectra.

The second point of view is much richer, and culminates in a category of spectra that represent $R\mathrm{O}(G)$-graded cohomology theories.  These are often called ``genuine $G$-spectra.''  An early contribution along these lines is Bredon's \cite{Bredon}.  The $K$-theory spectrum (but not every spectrum) has an avatar in the Bredon world, let us denote it by $\KU_G$, its $p$-completion by $\KU_{G,p}$, and its unramified extensions by $\KU_{G,q}$.

We have chosen to work in the simpler Borel world, and not only because it requires less background.  For instance, in the Bredon world the analog of the theorem of \S\ref{intro:blocks} is false --- according to the theory of Segal-tom Dieck splitting and the results of \cite{Dress1969}, each $p$-perfect subgroup of $G$ contributes its own blocks to the Bredon-style category.  I suspect that these other blocks are not really new and interesting.  For example, I make the following conjecture, which is closely related to a question raised by Mathew in the appendix:

\begin{conjecture*}
There is an equivalence of stable $\infty$-categories
\[
\LMod(\KU_{G,q})^\omega \cong \bigoplus_c \LMod(\KU_q[\Cent_G(c)])^{\mathit{ft}}
\]
where 
the left-hand side denotes the compact objects in the category of $\KU_{G,q}$-modules in genuine equivariant spectra, 
the sum on the right-hand side is over representatives for conjugacy classes of elements $c \in G$ of order prime to $p$, and the symbols $\Cent_G(c)$ denote centralizers.
\end{conjecture*}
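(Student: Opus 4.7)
The plan is to prove the conjecture by establishing a character-theoretic block decomposition of $\LMod(\KU_{G,q})$ and then identifying each block with a Borel category, via a module-level enhancement of the Atiyah--Segal completion theorem. Throughout I would assume $q-1$ is divisible by the prime-to-$p$ part of $\exp(G)$, so that $\bZ_q$ contains all relevant Brauer character values; smaller $q$ can then be recovered by Galois descent.

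The first step decomposes the coefficient ring $\pi_0 \KU_{G,q} = R(G) \otimes \bZ_q$ as a product $\prod_{[c]} A_{[c]}$ of local $\bZ_q$-algebras, with $[c]$ running over conjugacy classes of elements of $G$ of order prime to $p$. This is classical: over a sufficiently large unramified extension of $\bZ_p$, the primitive idempotents of $R(G) \otimes \bZ_q$ correspond bijectively to such conjugacy classes, via evaluation of characters. These central idempotents lift uniquely to endomorphisms of the ring spectrum $\KU_{G,q}$ (using $p$-completeness together with the vanishing of odd homotopy), yielding a product decomposition
\[
\LMod(\KU_{G,q}) \;\cong\; \prod_{[c]} \LMod\bigl(e_{[c]} \KU_{G,q}\bigr),
\]
which on compact objects becomes a direct sum.

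The essential second step identifies $\LMod(e_{[c]} \KU_{G,q})^{\omega}$ with $\LMod(\KU_q[\Cent_G(c)])^{\mathit{ft}}$. I would proceed in two substeps. First, use the induction-restriction adjunction to $H := \Cent_G(c)$: a double-coset argument should show that $\Res^G_H$ sends $e_{[c]}$ to the primitive idempotent in $R(H) \otimes \bZ_q$ attached to $c$, while $G$-conjugates of $c$ outside $H$ land in orthogonal summands, so that restriction becomes an equivalence on $[c]$-local pieces. Second, for the group $H$, in which $c$ is now central, invoke an $\infty$-categorical version of the $p$-complete Atiyah--Segal theorem (in the spirit of Mathew--Naumann--Noel) to match the $[c]$-local piece of $\KU_{H,q}$ with the Borel ring $F(EH_+, \KU_q)$. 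Geometrically, multiplication by $e_{[c]}$ translates completion at $c$ into completion at $1$, converting the $[c]$-stalk of the genuine theory into the Borel theory for $H$.

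The main obstacle will be this second substep: upgrading Atiyah--Segal from an identification of coefficient rings to an equivalence of $\infty$-categories of modules, and correctly matching the compactness condition on the genuine side to the finite-type condition on the Borel side. These two finiteness conditions genuinely differ --- the trivial Borel module $\KU_q$ is finite type but not compact, while the trivial genuine module $\KU_{G,q}$ is compact --- but they should be reconciled by the observation that $R(G) \otimes \bZ_q$ is a finitely generated $\bZ_q$-module, so compactness on the genuine side forces finite generation of $\pi_*$ over $\bZ_q$ after stalkwise localization, yielding finite type on the Borel side. A secondary subtlety is ensuring that the full centralizer $\Cent_G(c)$ (rather than the Weyl group $N_G(\langle c \rangle)/\langle c \rangle$) acts on the Borel side; this is automatic because we localize at an element rather than a subgroup, so the $[c]$-stalk remembers a chosen generator and its residual symmetry is precisely $\Cent_G(c)$.
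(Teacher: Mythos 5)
The statement you are proving is stated in the paper only as a \emph{conjecture}; the paper offers no proof (it says only that the conjecture is ``closely related to a question raised by Mathew in the appendix'' and is ``partly motivated'' by Theorem \ref{thm:pperm}). So there is nothing to compare your argument against, and I have to assess it on its own terms. Your first step (splitting $\LMod(\KU_{G,q})$ along the primitive idempotents of $R(G)\otimes\bZ_q$, indexed by $p$-regular classes) is sound and matches the algebraic decomposition the paper uses in \S\ref{subsec:comp}; lifting central idempotents of $\pi_0$ to a splitting of the module category is standard. Your reduction to the centralizer needs more care than ``restriction becomes an equivalence'' --- Bonnaf\'e's isomorphism is the $c$-twisted restriction $\beta_0$ landing in the \emph{principal} block of $R(\Cent_G(c))$, and promoting a $\pi_0$-level ring isomorphism to an equivalence of genuine module categories requires a Morita/bimodule argument --- but that part is plausibly fixable.

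The genuine gap is essential surjectivity in your second substep. The compact objects of $\LMod(\KU_{G,q})$ are generated as a thick subcategory by the objects $\KU_{G,q}\wedge (G/H)_+$, whose mapping spectra have $\pi_0 = K_G(G/H\times G/K)\otimes\bZ_q$; after localizing at the principal idempotent and passing to the Borel side, their images are exactly the $p$-permutation modules $\KU_q[G/H]$. Your observation that compactness forces finitely generated homotopy shows only that the comparison functor \emph{lands} in $\LMod(\KU_q[\Cent_G(c)])^{\mathit{ft}}$; it does not show the functor \emph{hits} every finite-type module. For that you need to know that the permutation modules $\KU_q[S]$ generate $\LMod(\KU_q[\Cent_G(c)])^{\mathit{ft}}$ as a thick (or stable) subcategory --- and this is precisely Question \ref{generateperm} of Mathew's appendix, which is open in general and resolved there only for $G=\bZ/p$ (Theorem \ref{KUZ/pgenerators}). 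No amount of Atiyah--Segal completion or finiteness bookkeeping substitutes for this generation statement: the paper's own Example in \S\ref{intro:support} exhibits minimal primes of the character ring that support no permutation module, so the question of whether some exotic finite-type module lives only over those primes is exactly what your argument silently assumes away. Until that generation question is settled, your proposal reproves (a version of) what the paper already establishes in \S\ref{sec:four} for the permutation-generated part, but does not prove the conjecture.
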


On the other hand, $\Fun(BG,\Mod(\KU))$ is not as good an approximation for the global (i.e. not completed at a prime) category of $\KU_G$-modules in genuine spectra.  The latter category is very interesting but not as closely analogous to structures of classical modular representation theory.  I hope to return to this elsewhere.

\subsection{$\KU_q$-representation theory as a smooth deformation of $\bZ_q$-representation theory}
There are two tools from algebraic topology which give a striking interpretation: the ``noncommutative smoothness'' of modular representation theory over $\KU$.  The two tools are the Adams spectral sequence, and the vanishing of the Tate construction \cite{GreenleesSadofsky}.  The relevance of the Greenlees-Sadofsky result to the noncommutative smoothness of group algebras and generalizations of them is the subject of a recent paper by Hopkins and Lurie \cite{HopkinsLurie}.

If $M$ is a $\KU_q[G]$-module, let $\pi_0 M,\pi_1M$ denote the homotopy groups of $M$ with their actions of $G$.  There are many nonzero morphisms on $\KU_q[G]$-modules which induce zero on homotopy groups --- analyzing this phenomenon carefully leads to the Adams filtration
\[
\cdots \subset [M,N]^{\geq 2} \subset [M,N]^{\geq 1} \subset [M,N]^{\geq 0} = [M,N]
\]
where $f \in [M,N]^{\geq k}$ if it can be factored as a composite of $k$ maps, each inducing zero on homotopy groups.  In good cases the associated graded of this filtration is the target of a spectral sequence, whose $E_2$ page is the $\Ext$-group between $\pi_* M$ and $\pi_* N$ taken in the category of $\bZ/2$-graded $\bZ_q[G]$-modules.  So we informally regard $\KU_q[G]$-module theory as a ``deformation'' of $\bZ_q[G]$-module theory.  We now explain a sense in which this deformation is \emph{smooth}.

Following \cite{KoSo}, let us call an associative algebra spectrum smooth if its diagonal bimodule is compact in the $\infty$-category of bimodules --- this means that the covariant functor represented by the diagonal bimodule preserves direct sums (infinite ones).  For the group algebra of a finite group $G$, Frobenius reciprocity shows this notion to be equivalent to the assertion that the trivial module is compact.  Over $\bZ_q$, this fails in every interesting case, whenever the residue characteristic divides the order of $G$.

The $K$-theoretic group ring does better, but to obtain a correct statement we have to replace $\LMod(\KU_q[G])$ with another category called $L_{\hat{p}}\LMod(\KU_q[G])$ --- the $p$-completion of the category of $\KU_q[G]$-modules.  This difference between these categories is somewhat faint --- they are related by adjoint functors that become equivalences when restricted to finite type modules.  Still, infinite direct sums are different in the two categories.\footnote{We are not holding $\KU_q$-coefficients to a lower standard of smoothness than $\bZ_q$-coefficients: the trivial $\bZ_q[G]$-module is not compact in either $\LMod(\bZ_q[G])$ or $L_{\hat{p}}\LMod(\bZ_q[G])$}

The space of maps out of the trivial module to another $\KU_q[G]$-module $M$ is closely related to the homotopy fixed point spectrum $M^{hG}$, and the assertion that the trivial $\KU_q[G]$-module is compact would be implied by the assertion that $M \mapsto M^{hG}$ preserves direct sums.  This is not true in $\LMod(\KU_q[G])$, but it is true in $L_{\hat{p}}\LMod(\KU_q[G])$ --- this is a special case of ``the vanishing of the Tate construction,'' a miracle of the $K(1)$-local world that identifies $M^{hG}$ with a functor $L_{K(1)}M_{hG}$ that more obviously preserves direct sums.

\subsection{$p$-permutation modules}
If $X$ is a $G$-set and $\bZ_q[X]$ is the corresponding permutation $G$-module, the indecomposable direct summands of $\bZ_q[X]$ are known as \emph{$p$-permutation modules}, \cite{Broue}.  The same definition makes sense with $\bZ_q[X]$ replaced by $\KU_q[X]$.  In \S\ref{sec:four}, we will prove

\begin{theorem*}
The functor $M \mapsto \pi_0 M$ induces a bijection between isomorphism classes of indecomposable $p$-permutation $\KU_q[G]$-modules and indecomposable $p$-permutation $\bZ_q[G]$-modules.
\end{theorem*}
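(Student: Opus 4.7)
My plan is to reduce the theorem to a statement about endomorphism rings of permutation modules: for every finite $G$-set $X$ with $p$-subgroup stabilizers, I will show that the natural ring homomorphism
\[
\pi_0 \End_{\KU_q[G]}(\KU_q[X]) \longrightarrow \End_{\bZ_q[G]}(\bZ_q[X])
\]
is surjective with kernel contained in the Jacobson radical. Both rings are finitely generated as $\bZ_q$-modules and hence $p$-adically complete, so the classical idempotent-lifting lemma will then produce a conjugation-preserving bijection between primitive idempotents on the two sides. Every indecomposable $p$-permutation module is a summand of some $\KU_q[X]$ (resp.\ $\bZ_q[X]$) with $p$-subgroup stabilizers by the vertex theorem on the $\bZ_q[G]$ side and its lift to the $\KU_q[G]$ side; combined with the bijection on primitive idempotents this gives the theorem.

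The endomorphism rings are computed by adjunction: for a single orbit $X = G/K$, $\Maps_{\KU_q[G]}(\KU_q[G/K], N) \simeq N^{hK}$, which yields the double-coset decomposition
\[
\pi_0 \End_{\KU_q[G]}(\KU_q[G/K]) \;\cong\; \bigoplus_{y \in K \backslash G / K} \KU_q^0(B K_y), \qquad K_y := K \cap yKy^{-1},
\]
while $\End_{\bZ_q[G]}(\bZ_q[G/K])$ is the free $\bZ_q$-module on the same set of double cosets, with the comparison map acting as the $K$-theoretic augmentation on each summand. When $K$ is a $p$-subgroup every $K_y$ is also a $p$-group, so Kuhn's theorem (\S\ref{intro:Kuhn}) identifies $\KU_q^0(B K_y)$ with the $I$-adic completion of $R(K_y) \otimes \bZ_q$, a complete local $\bZ_q$-algebra whose augmentation ideal is the unique maximal ideal. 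Thus the kernel of the comparison map lies componentwise in the maximal ideal of each summand.

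The step I expect to be the main obstacle is promoting this componentwise statement to a genuine radical condition on the full Hecke algebra, since the ring structure on $\pi_0 \End_{\KU_q[G]}(\KU_q[X])$ mixes the double-coset components and the kernel is not visibly a direct sum of ideals. My first attempt would be to reduce modulo $p$: the mod $p$ reduction is a finite-dimensional $\bF_q$-algebra, and a Nakayama-type argument combined with the componentwise nilpotence of the augmentation ideal in each $\KU_q^0(B K_y) \otimes \bF_q$ should yield a uniform nilpotence bound on the mod-$p$ kernel, which is then enough. An alternative, perhaps cleaner, route is to set up an Adams-style spectral sequence with $E_2 = \Ext^*_{\bZ_q[G]}(\bZ_q[X], \bZ_q[X])$ converging to $\pi_{-*} \End_{\KU_q[G]}(\KU_q[X])$, identify the kernel of the comparison map as Adams filtration $\geq 1$, and invoke the $K(1)$-local nilpotence and vanishing-of-Tate results indicated in \S\ref{intro:borelbredon} to bound its nilpotence degree.
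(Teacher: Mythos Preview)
Your strategy matches the paper's closely. The paper also fixes a single $G$-set $Y$ large enough that every orbit type appears, sets $\cK = \pi_0 \End_{\KU_q[G]}(\KU_q[Y])$ and $\cH = \End_{\bZ_q[G]}(\bZ_q[Y])$, and analyzes the augmentation map $\cK \to \cH$. Both rings are finite free over $\bZ_q$, hence semiperfect, so Krull--Schmidt holds and the problem reduces, exactly as you say, to lifting primitive idempotents along $\cK \to \cH$.

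The difference is in how the ``main obstacle'' is resolved. Rather than aiming for the Jacobson-radical statement, the paper proves the slightly weaker fact that the kernel $J$ is \emph{Hausdorff} for a multiplicative filtration: there is $\cK = \cK^{\geq 0} \supset \cK^{\geq 1} = J \supset \cdots$ with $\cK^{\geq s}\cdot\cK^{\geq t} \subset \cK^{\geq s+t}$ and $\bigcap_n \cK^{\geq n} = 0$. This already forbids idempotents in $J$ (if $e = e^2 \in J$ then $e \in J^n$ for all $n$), and idempotent lifting then follows from semiperfectness. The filtration used is the Atiyah--Hirzebruch filtration on $\KU_q^0((Y \times Y)_{hG})$; the required multiplicativity under \emph{convolution} is checked stalkwise on $Y \times Y$ and reduces to the standard multiplicativity of the AH filtration under tensor product of virtual bundles. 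This is precisely your second proposed route --- the Adams filtration on $[\KU_q[Y],\KU_q[Y]]$ coincides with the AH filtration --- and it avoids having to extract any uniform mod-$p$ nilpotence bound. Your first route (``Nakayama-type argument'') is vaguer and would, I think, end up reproving the same multiplicativity statement.

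One small point: your restriction to $G$-sets with $p$-group stabilizers is unnecessary and mildly circular, since the vertex theorem is available a priori only on the $\bZ_q[G]$ side. The paper simply takes $Y$ to contain $G/H$ for \emph{all} subgroups $H$. Your componentwise observation would still go through in that generality, since $\KU_q^0(BK_y)$ is local for any finite $K_y$ (the Kuhn quotient kills every idempotent of $R(K_y)\otimes\bZ_q$ except the principal one); but in any case the paper's AH-filtration argument does not rely on that componentwise picture.
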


Projective modules are examples of $p$-permutation modules, and as we have mentioned already in \S\ref{intro:blocks} for projective modules the functor of the theorem is an equivalence of homotopy categories.

The endomorphism ring of $\KU_q[X]$ can be computed using \S\ref{intro:Kuhn} as a quotient (and $p$-completion) of the ring $K_G(X \times X)$ of virtual $G$-equivariant vector bundles on $X$, with its convolution structure.  This ring has been studied by Lusztig \cite{Lusztig1}. Thus the theorem is reduced to a statement of pure algebra fairly easily.  We have therefore tried to keep \S\ref{sec:four} self-contained.  

We actually prove a more general statement, avoiding the Kuhn quotient.  We study the Karoubi completion of the category whose objects are finite $G$-sets and whose morphisms from $X$ to $Y$ are given by $K_G(X \times Y) \otimes \bZ_q$.   Using some results of Bonnaf\'e about the blocks of $R(G)\otimes \bZ_q$, we show that the isomorphism classes of indecomposable objects of this category are in bijection with conjugacy classes of pairs $(g,M)$ where $g \in G$ has order prime to $p$ and $M$ is a usual $p$-permutation module for $\Cent_G(g)$.  The conjecture of \S\ref{intro:borelbredon} is partly motivated by this decomposition, as well as by the result of Mathew in the appendix.

\subsection{The character ring}
\label{intro:19}

Let us denote by $\Cent(\KU_q[G])$ the ring of natural endomorphisms of the identity functor on the homotopy category of $\LMod(\KU_q[G])$.  If we let $G_{\conj, hG}$ denote the Borel construction of $G$ with its conjugation action, then a very general result of \cite{BZFN} identifies $\Cent(\KU_q[G])$ with $\KU_q^0(G_{\conj, hG})$ --- by \S\ref{intro:Atiyah} and \S\ref{intro:Kuhn}, this is the group of conjugation-equivariant virtual vector bundles on $G$, divided by the Kuhn-trivial vector bundles.  It is a free $\bZ_q$-module whose rank is the cardinality of the set of conjugacy classes of commuting pairs $(u,g)$, where $u$ has $p$-power order.

The precise structure of $\Cent(\KU_q[G])$ --- additively and multiplicatively --- is more subtle.  For example if $G$ is a commutative $p$-group, there is a canonical isomorphism of rings $\Cent(\KU_q[G]) \cong \bZ_q[G \times G^{\sharp}]$, reflecting the symmetry of \S\ref{intro:newsymmetries}.  In general the structure of $\Cent(\KU_q[G])$ can be deduced from results of \cite{Lusztig1} and \cite{Bonnafe2}.  Before describing it, we introduce some notation.  

Let us say that an ``enriched conjugacy class''  of order $n$ in $G$ is a conjugacy class $C \subset G$ of order $n$, together with an irreducible conjugation-equivariant vector bundle $\cL$ over $C$.  Put another way, an enriched conjugacy class is a conjugacy class of pairs $(g,L)$, where $g \in G$ and $L$ is an irreducible complex representation of $\Cent_G(g)$.  There is an action of $\hat{\bZ}^* \times \hat{\bZ}^*$ on enriched conjugacy classes --- $(\gamma_1,\gamma_2) \in \hat{\bZ}^* \times \hat{\bZ}^*$ acts by raising $g$ to the $\gamma_1$th power and by sending $L$ to its Galois conjugate by $\gamma_2$ (thinking of $\gamma_2$ as belonging to $\Gal(\bQ(\mu_{\infty})/\bQ)$).

\begin{theorem*}
The commutative ring $\Cent(\KU_q[G])$ is reduced, free of finite rank over $\bZ_q$, and of Krull dimension one.  The maximal prime ideals of $\Cent(\KU_q[G])$ are in one-to-one correspondence with blocks of $\bZ_q[G]$.  If $q$ is sufficiently large, the minimal prime ideals of $\Cent(\KU_q[G])$ are naturally in bijection with $\bZ_p^*$-orbits of enriched conjugacy classes of $p$-power order, where $\bZ_p^* \subset \hat{\bZ}^* \times \hat{\bZ}^*$ is embedded diagonally. 
\end{theorem*}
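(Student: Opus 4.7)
The plan is to unpack $\Cent(\KU_q[G])$ via the identification $\Cent(\KU_q[G]) \cong \KU_q^0(G_{\conj,hG})$ (the BZFN result recalled in \S\ref{intro:19}), together with the groupoid decomposition $G_{\conj,hG} \simeq \bigsqcup_{[g]} B\Cent_G(g)$ and Kuhn's theorem, yielding
\[
\Cent(\KU_q[G]) \;=\; \bigoplus_{[g]\in G/\conj}\, (R(\Cent_G(g))\otimes\bZ_q)\big/\KuhnI
\]
as a $\bZ_q$-module. Crucially, the ring structure from BZFN is the loop-composition / pair-of-pants product on $K_G^0(G_\conj)\otimes\bZ_q$ studied by Lusztig, not the naive pointwise product on the direct sum. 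For $G$ abelian, this is precisely the difference between the group algebra $\bZ_q[G\times G^{\sharp}]$ of \S\ref{intro:19} and the distinct ring $\bZ_q[G^{\sharp}]^{|G|}$ that cup product would yield.

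From this presentation the three ``easy'' assertions follow quickly. Finite freeness over $\bZ_q$ is immediate from Kuhn/Strickland. Reducedness comes from Atiyah's no-nilpotents statement in \S\ref{intro:Atiyah} applied to each $\KU_q^*(B\Cent_G(g))$, combined with the embedding of $\Cent(\KU_q[G])$ into its rationalization (a product of fields once we invoke Lusztig's description below). Krull dimension one is then forced: a finite $\bZ_q$-algebra has dimension at most one, and the reduction mod $p$ is nonzero (the trivial representation of each centralizer survives the Kuhn quotient), so dim $=1$.

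For the maximal primes $\leftrightarrow$ blocks identification, a finite commutative $\bZ_q$-algebra is semilocal and decomposes canonically as a product of local rings indexed by primitive idempotents. These idempotents of $\Cent(\KU_q[G])$ are exactly the block idempotents of $h\LMod(\KU_q[G])$, which by the theorem of \S\ref{intro:blocks} are in bijection with blocks of $\bZ_q[G]$-modules. Each local factor has a unique maximal ideal, so maximal ideals biject with blocks.

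The minimal primes are the main content. Lusztig's results \cite{Lusztig1} identify the spectrum of $K_G^0(G_\conj)\otimes\bC$ (with the convolution product) with the set of enriched conjugacy classes of $G$: the $\bC$-points of this ring are precisely pairs $(g,L)$, and this is the key Fourier/character-theoretic transform on the ``second coordinate'' that distinguishes the BZFN ring from cup product. The Kuhn quotient cuts out exactly those $(g,L)$ with $g$ of $p$-power order, since by Brauer's vanishing criterion Kuhn-triviality at the stalk over $[g]$ forces vanishing on the $p$-Sylow of $\Cent_G(g)$, and this is nontrivial on $L$ precisely when $g$ itself has $p$-power order. For $q$ sufficiently large so that $\bZ_q$ contains the prime-to-$p$ roots of unity relevant to $|G|$, the only remaining Galois is $\bZ_p^* = \Gal(\bQ_q(\zeta_{p^\infty})/\bQ_q)$; HKR character theory for $\KU_q$ shows this Galois acts \emph{diagonally} on $(g,L)$, identifying the power map $g\mapsto g^\gamma$ on the conjugacy class (Adams operations) with Galois conjugation on character values of $L$, since both arise from the same action of $\bZ_p^*$ on $p^\infty$-th roots of unity. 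Taking $\bZ_p^*$-orbits then yields the bijection. Bonnafé's analysis \cite{Bonnafe2} of $R(G)\otimes\bZ_q$ is used to confirm the compatibility between the block decomposition and the minimal-prime count as we descend from $\bC$ to $\bZ_q$. The main obstacle is the Lusztig step itself --- identifying BZFN's loop-composition product with Lusztig's convolution, and its $\bC$-spectrum with enriched conjugacy classes rather than the ``wrong'' parametrization by pairs $(g, [h])$ one would obtain from cup product --- together with the careful tracking of the diagonal $\bZ_p^*$-equivariance through the HKR correspondence.
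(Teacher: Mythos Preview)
Your strategy matches the paper's: identify $\Cent(\KU_q[G])$ with $K_G(G_{\conj})\otimes\bZ_q$ modulo the Kuhn ideal, exhibit its $\bC$-points as enriched conjugacy classes of $p$-power order, and compute the Galois action. The difference is one of execution. Where you appeal to Lusztig's structure theorem and HKR character theory as black boxes, the paper writes the characters down explicitly --- for each $(u,L)$ with $u$ of $p$-power order and $L\in\mathrm{Irr}(\Cent_G(u))$,
\[
\chi_{(u,L)}(\cE) \;=\; \frac{1}{\dim L}\sum_{g\in\Cent_G(u)} \trace(u\mid\cE_g)\,\trace(g\mid L),
\]
cites Lusztig only to confirm this is a ring homomorphism, and then verifies $\gamma\cdot\chi_{(u,L)} = \chi_{(u^{\gamma},\gamma(L))}$ by a two-line computation with character sums. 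This makes the diagonal $\bZ_p^*$-action transparent without HKR. Reducedness is also handled via this formula: there are exactly $r$ such pairs $(u,L)$, where $r$ is the $\bZ_q$-rank, so linear independence of characters embeds $\Cent(\KU_q[G])$ in $\bC^r$.

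Two points in your write-up need repair. First, your reducedness step invokes ``Atiyah's no-nilpotents,'' but that statement concerns the \emph{cup product} on each $\KU_q^*(B\Cent_G(g))$ and says nothing about nilpotents in the convolution ring; what you actually need from Atiyah is \emph{torsion-freeness} of the underlying $\bZ_q$-module, so that the map into the rationalization is injective, after which Lusztig's semisimplicity over $\bC$ finishes the argument. Second, your claim that the Kuhn quotient selects exactly those $(u,L)$ with $u$ of $p$-power order conflates the stalk index $g\in G_{\conj}$ in the additive decomposition with the character index $u$, and the appeal to Brauer's vanishing criterion does not clearly deliver both directions. The paper avoids this entirely by a count: the $\chi_{(u,L)}$ with $u$ a $p$-element visibly annihilate the Kuhn ideal (since $\trace(u\mid\cE_g)=0$ whenever $\cE_g$ is Kuhn-trivial), there are $r$ of them, and $r$ equals the rank of the quotient --- so they are all of the $\bC$-points, with no need to argue separately that characters indexed by $p$-singular $u$ fail to descend.
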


If $u \in G$ has $p$-power order, there is a surjective map from the blocks of $\Cent_G(u)$ to the blocks of $G$ whose defect group meets the conjugacy class of $u$.  This map is a special case of the Brauer correspondence, the case relevant in his ``second main theorem'' \cite{Brauer,Dade}.  This map matches the ``specialization'' map from minimal to maximal primes in $\Cent(\KU_q[G])$.  More precisely, each pair $(u,L)$ determines a block $b$ of $\Cent_G(u)$ --- the block containing all the $\bZ_q[G]$-lattices in $L$ --- and the block $B$ corresponding to the specialization of $(u,L)$ is the Brauer correspondent of $b$.
 
\subsection{Support}
\label{intro:support}
We have mentioned in \S\ref{intro:Atiyah} the notion of support for $G$-modules in classical representation theory.  Let us now discuss an analog in $K$-theoretic representation theory --- it is rather an analog of a refined version of support considered by Linckelmann \cite{Linckelmann}.

As $\Cent(\KU_q[G])$ is the endomorphism ring of the identity functor, for any $M \in \LMod(\KU_q[G])$ there is a ring map
$
\Cent(\KU_q[G]) \to [M,M]
$.
We define the support of $M$ to be the set of associated prime ideals of the kernel of this map.

\begin{example*}
Let $X$ be a finite $G$-set and $(u,L)$ an enriched conjugacy class of $p$-power order.  The minimal prime ideal corresponding to $(u,L)$ is in the support of the permutation module $\KU_q[X]$ if and only if $L$ appears in the $\bC[\Cent_G(u)]$-permutation module $\bC[X^u]$.  
\end{example*}

The example shows that, in particular, whenever $u$ acts nontrivially on $L$, the miminal prime ideal corresponding to $(u,L)$ supports no permutation $\KU_q[G]$-modules.  It would follow from the Conjecture of \S\ref{intro:borelbredon} that there is no finite-type $\KU_q[G]$-module at all whose support contains such a $(u,L)$ --- if $G$ is cyclic of order $p$ this follows unconditionally from Mathew's result in the appendix.  This is a curious phenomenon, uninhabited regions in the spectrum of the Hochschild cohomology ring.

\subsection{Questions}

\subsubsection{Realization of modules}
\label{intro:q1}
If $\overline{M}$ is a $\bZ_q[G]$-module, when can we find a $\KU_q[G]$-module $M$ with $\pi_0 M = \overline{M}$ and $\pi_1 M = 0$?  We call such an $M$ an even realization of $\overline{M}$.  In \S\ref{sec:carlsson}, we adapt Carlsson's counterexample to the equivariant Moore space problem \cite{Carlsson} to show that this is not always possible.  

But we have already discussed a positive solution for projective modules and more general $p$-permutation modules, and in \S\ref{subsec:one-dimensional} we consider the case where $\overline{M}$ free of rank one over $\bZ_q$.  For other natural classes of 
modules, I am not sure what to expect:
\begin{enumerate}
\item Can every irreducible $\bF_q[G]$-modules be evenly realized over $\KU_q[G]$?  
\item Does every $\bQ_q[G]$-module have a $\bZ_q[G]$-lattice that can be evenly realized over $\KU_q[G]$?
\end{enumerate}

\subsubsection{Equivalences between blocks}
\label{intro:q2}
Given finite groups $G,G'$, a block $b$ of $G$ and a block $b'$ of $G'$, does the existence of a derived equivalence $D^b(\bF_q[G]b) \cong D^b(\bF_q[G']b')$ imply that of an equivalence $\LMod(\KU_q[G]b)^{\mathit{ft}} \cong \LMod(\KU_q[G']b')^{\mathit{ft}}$, or conversely?  The result discussed in \S\ref{intro:newsymmetries} shows that the groups of self-equivalences  of $\bF_q[G]b$ and of $\KU_q[G]b$ can be different.

In $\bZ_q$- and $\bF_q$-representation theory, an interesting class of derived equivalences between blocks are those that can be given by two-term complexes of $(G,G')$-bimodules of the form $P \to M$, where $P$ is a projective bimodule and $M$ is a $p$-permutation bimodule.  For instance, such complexes arise in Rouquier's solution to the Brou\'e conjecture for blocks of cyclic defect \cite[\S 10.3]{Rouquier}, or in Rickard's solution to the Brou\'e conjecture for the principal block at $p = 2$ of the alternating group $\mathrm{A}_5$ \cite[\S 3]{Rickard}.  These bimodules have very natural realizations (not even ones) in the $K$-theoretic world --- do they still give equivalences?

\subsubsection{Bredon version of \S\ref{intro:newsymmetries}} In the notation of \S\ref{intro:borelbredon}, it is likely that $\LMod(\KU_G) \cong \LMod(\KU_{G^\sharp})$ for any finite commutative group $G$.  Can the construction in the Example of \S\ref{subsec:one-dimensional}, which gives the bimodule realizing the equivalence of \S\ref{intro:newsymmetries}, be upgraded to a ``genuine'' $G$-spectrum?

\subsubsection{Brauer functor}  What should be the theory of the Brauer functor, in the $\KU_q$-story?  

In classical modular representation theory, given a $p$-subgroup $Q \subset G$, the Brauer functor is a functor from $p$-permutation modules for $\bF_q[G]$ to those for $\bF_q[N_G(Q)]$, which is used constantly.  It is a Smith-theoretic construction, sending a summand of the permutation representation for the $G$-set $X$ to a summand of the permutation representation of the $N_G(Q)$-set $X^Q$.  

For $\KU_q[G]$-modules, it seems that something like this is available only when $Q$ is cyclic, in which case $X \mapsto X^Q$ extends to a functor from $p$-permutation modules for $\KU_q[G]$ to modules for $\bQ_q(\zeta_{p^e})[\Cent_G(Q)]$ --- here $p^e$ is the order of $Q$ and $\zeta_{p^e}$ is a $p^e$th root of unity.  The mismatch of coefficients here (is it related to the ``blueshift'' of \cite{HoveySadofsky}?) is partly disappointing --- it makes it difficult to iterate the construction.  But the $\bF_q$-Brauer functor does not lift to characteristic zero, can one make use of this?

\subsection{Acknowledgments}
I learned a lot of $K$-theory from Dustin Clausen and Tyler Lawson, and I benefited from correspondence with Raphael Rouquier about some of these ideas.  I am also grateful to Akhil Mathew for the appendix, and for many comments and corrections to the paper. I was supported by NSF-DMS-1206520 and a Sloan Foundation Fellowship, and some of this paper was written at the MSRI in Berkeley.

\subsection{Notation}
Write $\Cent_G(g)$ for the centralizer of an element $g$ in a group $G$.  We let $\mathrm{ad}_g$ denote the conjugation action of $g$: $\mathrm{ad}_g(x) := gxg^{-1}$.

A spectrum $\mathbf{E}$ determines an extraordinary cohomology theory.  The $i$th $\mathbf{E}$-cohomology of $X$ will be denoted in two ways (depending on what is less of an eyesore) --- either as $\mathbf{E}^i(X)$ or as $H^i(X;\mathbf{E})$.

If $X$ and $Y$ are objects of an $\infty$-category write $\Maps(X,Y)$ for the space of maps between them, and $[X,Y]$ for the set of connected components of $\Maps(X,Y)$.

``Ring spectrum'' or ``associative ring spectrum'' means $\mathrm{E}_1$-algebra object in spectra.  Write $\LMod(\bA)$ for the category of left modules over an associative ring spectrum.  ``Commutative ring spectrum'' means $\mathrm{E}_\infty$-algebra object in spectra.  Write $\mathrm{Mod}(\bA) = \LMod(\bA)$ for the symmetric monoidal category of (left) modules over a commutative ring spectrum.

We write $\Sigma$ for the suspension functor in a stable $\infty$-category.  Thus, $\Sigma M = M[1]$ in the usual cohomological triangulated category conventions.

If $X$ is a space $X_+$ denotes the disjoint union of that space and a point.  $\Sigma^{\infty} X_+$ denotes the suspension spectrum of $X$.  

If $X$ is a space with a $G$-action, let $X_{hG}$ denote the Borel construction, i.e. $X_{hG} = (X \times EG)/G$ where $EG$ is contractible with free $G$-action.  We write $X^{hG}$ for the space of homotopy fixed points.  We also apply the sub- and super-script $hG$ to spectra with $G$-actions, i.e. to functors from $BG$ to the $\infty$-category of spectra, in which case
\[
\bE_{hG} := \varinjlim_{BG} \bE, \qquad \qquad \bE^{hG} := \varprojlim_{BG} \bE
\]
the direct and inverse limits taken in the $\infty$-categorical sense.

\section{Representations over a good coefficient algebra}

We will call a commutative ring spectrum $\bA$ a \emph{good coefficient algebra} if $\pi_0\bA$ is a principal ideal domain, and there is a class $\beta \in \pi_2\bA$ such that $\pi_*\bA \cong (\pi_0\bA)[\beta,\beta^{-1}]$.  In particular, the homotopy groups of a good coefficient algebra vanish in odd degrees.  The $K$-theory spectrum and its completions are the examples we are interested in, but in this section we study those features of $\KU[G]$-modules that can be deduced from this ``good coefficients'' property.

If $\bA$ is a good coefficient algebra the $1$-categorical structure of $\Mod(\bA)$ is very simple:

\begin{prop}
\label{thm:coefficients}
Let $\bA$ be a good coefficient algebra.  Every object $M \in \Mod(\bA)$ is isomorphic to a direct sum $M_0 \oplus \Sigma M_1$, where $M_0$ and $M_1$ are ``even'', i.e. have vanishing odd homotopy groups.  If $M$ and $N$ are even, there are natural isomorphisms
\begin{enumerate}
\item $[M,N] \stackrel{\sim}{\to} \Hom(\pi_0(M),\pi_0(N))$
\item $[M,\Sigma N] \stackrel{\sim}{\to} \Ext^1(\pi_0(M),\pi_0(N))$
\end{enumerate}
The first isomorphism sends the homotopy class of a map $f:M \to N$ the induced map on $\pi_0$, and the second isomorphism sends the homotopy class of a map $f:M \to \Sigma N$ to the class in $\Ext^1$ of the extension
\[
0 \to \pi_0(N) \to \pi_0(\Sigma^{-1} \mathrm{Cone}(f)) \to \pi_0(M) \to 0
\]
\end{prop}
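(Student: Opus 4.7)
The plan is to construct an even realization $M(A) \in \Mod(\bA)$ of each $\pi_0\bA$-module $A$, and then deduce the Hom/Ext computations and the splitting from the long exact sequences attached to the cofiber sequence defining $M(A)$. The single input from the ``good coefficients'' hypothesis that I will use throughout is that maps out of a free $\bA$-module $\bA \otimes F := \bigoplus_F \bA$ are detected on $\pi_0$: indeed $\Maps(\bA \otimes F, N) = N^F$, so $\pi_n \Maps(\bA \otimes F, N) = \Hom_{\pi_0\bA}(F, \pi_n N)$ whenever $F$ is a free $\pi_0\bA$-module. In particular this lets me realize any $\pi_0\bA$-linear map between free modules by an essentially unique map of free $\bA$-modules.

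First, given $A$, I would choose a short free resolution $0 \to F_1 \xrightarrow{d} F_0 \to A \to 0$ (possible since $\pi_0\bA$ is a PID), lift $d$ to $\tilde d: \bA \otimes F_1 \to \bA \otimes F_0$, and set $M(A) := \mathrm{cofib}(\tilde d)$. The long exact sequence on homotopy, combined with the vanishing of odd homotopy of $\bA$, collapses to $0 \to \pi_1 M(A) \to F_1 \xrightarrow{d} F_0 \to \pi_0 M(A) \to 0$, giving $\pi_0 M(A) = A$ and $\pi_1 M(A) = 0$; 2-periodicity by $\beta$ then forces $M(A)$ to be even.

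For the mapping-space formulas I would apply $\Maps(-,N)$ to $\bA \otimes F_1 \to \bA \otimes F_0 \to M(A)$ to produce a fiber sequence and read off its long exact sequence,
\[
\cdots \to \pi_n\Maps(M(A),N) \to \Hom(F_0,\pi_n N) \xrightarrow{d^*} \Hom(F_1,\pi_n N) \to \pi_{n-1}\Maps(M(A),N) \to \cdots.
\]
If $N$ is even, the $n = \pm 1$ terms vanish and the $n=0$ piece reads off $[M(A),N] = \ker(d^*) = \Hom_{\pi_0\bA}(A,\pi_0 N)$ and $[M(A),\Sigma N] = \pi_{-1}\Maps(M(A),N) = \mathrm{coker}(d^*) = \Ext^1_{\pi_0\bA}(A,\pi_0 N)$. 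Naturality of the first isomorphism is automatic from how the boundary map is defined; for the second, one must check that the boundary sends a class $f \in [M(A),\Sigma N]$ to the Yoneda class of $0 \to \pi_0 N \to \pi_0\,\mathrm{fib}(f) \to \pi_0 M(A) \to 0$, which follows by splicing the cofiber sequences of $\tilde d$ and of $f$.

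Finally, for an arbitrary $M$, write $A = \pi_0 M$ and $B = \pi_1 M$. A set-theoretic lift of the surjection $F_0 \twoheadrightarrow A$ yields a map $\bA \otimes F_0 \to M$ whose composite with $\tilde d$ vanishes on $\pi_0$ and is therefore null-homotopic by the detection principle above; the cofiber property promotes this to a map $M(A) \to M$ realizing the identity on $\pi_0$. The same construction applied to $\Omega M$ produces $\Sigma M(B) \to M$ realizing the identity on $\pi_1$, and the combined map $M(A) \oplus \Sigma M(B) \to M$ is then a $\pi_*$-isomorphism by 2-periodicity, hence an equivalence. I expect the only genuinely delicate point to be the $\Ext^1$ naturality — matching the connecting map of the mapping-space LES with Yoneda's extension class is a standard but finicky diagram chase with cones, and is the one step that is not pure formal LES bookkeeping.
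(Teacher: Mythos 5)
Your proposal is correct and follows essentially the same route as the paper: present an even module as the cofiber of a map of free $\bA$-modules lifting a length-one free resolution of $\pi_0$ (using that submodules of free modules over a PID are free), and read off $[M,N]$ and $[M,\Sigma N]$ from the resulting long exact sequence of mapping spaces. The only cosmetic difference is in the splitting step --- the paper splits a free presentation of $M$ into its even and odd parts, while you map $M(\pi_0 M)\oplus\Sigma M(\pi_1 M)$ into $M$ and check it is a $\pi_*$-isomorphism --- and both the paper and you leave the identification of the connecting map with the Yoneda class as a routine verification.
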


Put another way, the homotopy category of $\Mod(\bA)$ is equivalent to a category whose objects are $\bZ/2$-graded $\pi_0 \bA$-modules, and whose morphisms from $(M_0,M_1)$ to $(N_0,N_1)$ are given by $2 \times 2$ matrices whose diagonal entries are $\Hom(M_0,N_0)$ and $\Hom(M_1,N_1)$, and whose off-diagonal entries are taken from $\Ext^1(M_0,N_1)$ and $\Ext^1(M_1,N_0)$.

\begin{proof}
We use the fact that $\pi_i M \cong [\Sigma^i \bA,M]$, and that over a principal ideal domain a submodule of a free module is free.

Let us call an $\bA$-module free if $\pi_* M$ is free as a $\pi_* \bA$-module.  If $M$ is free, a $\pi_*\bA$-basis for $\pi_* M$ determines a map $\bA^{\oplus i} \oplus \Sigma \bA^{\oplus j} \to M$.  From the identification $\pi_i M \cong [\Sigma^i \bA,M]$ we deduce this map is a isomorphism --- the map from the $\bA^{\oplus i}$ summand induces an isomorphism on $\pi_0$ and the zero map on $\pi_1$, while the map from $\Sigma \bA^{\oplus j} \to M$ induces an isomorphism on $\pi_1$ and the zero map on $\pi_0$.  It follows that if $M$ and $N$ are free, then $\pi_*$ gives an isomorphism from $[M,N]$ to $\Hom(\pi_0 M,\pi_0 N) \oplus \Hom(\pi_1 M,\pi_1 N)$.  

Suppose now that $M \in \LMod(\bA)$ is any object and let us show that $M \cong M_0 \oplus \Sigma M_1$, where $M_0$ and $M_1$ are even.  A set of generators of $\pi_* M$ determines a map $f:A^{\oplus i} \oplus \Sigma \bA^{\oplus j} \to M$ that induces a surjection on $\pi_0 M$ and $\pi_1 M$. From the long exact sequence of homotopy groups and the fact that submodules of free modules are free, it follows that the fiber of $f$ is also free, say $\bA^{\oplus i'} \oplus \Sigma \bA^{\oplus j'}$.  For $M_0$ we take the cone on the map $\bA^{\oplus i'} \to \bA^{\oplus i}$ and for $M_1$ the cone on the map $\bA^{\oplus j'} \to \bA^{\oplus j}$.

It remains to show that (1) and (2) hold.  Suppose $M,N \in \LMod(\bA)$ are even.  We may write $M$ as the cone on a map of even, free $\bA$-modules $F_1 \to F_0$ that induces an injection on homotopy groups.  From the discussion so far it is clear that $\pi_0 F_1 \to \pi_0 F_0$ is a free resolution of $\pi_0 M$, that $[F_1,\Sigma^{-1} N]$ and $[F_0,\Sigma N]$ vanish, and that $[F_i,N] \cong [\pi_0 F_i,\pi_0 N]$.  The isomorphisms (1) and (2) can now be deduced from the long exact sequence
\[
[F_1,\Sigma^{-1} N] \to [M,N] \to [F_0,N] \to [F_1,N] \to [M,\Sigma N] \to [F_0,\Sigma N]
\]
induced by the exact triangle $F_1 \to F_0 \to M \to$.
\end{proof}

\subsection{Roots of unity}
Let us say a good coefficient algebra $\bA$ is ``$p$-adic'' if $\pi_0 \bA$ is a complete discrete valuation ring of residue characteristic $p$.  (We have in mind the mixed characteristic case, but will not make use of this property.)  If $\bA$ is a $p$-adic good coefficient algebra, then for any prime-to-$p$ cyclotomic extension $\cO$ of $\pi_0 \bA$, there is a unique commutative $\bA$-algebra spectrum $\cA$ such that $\cA$ is a good coefficient algebra and $\pi_0(\cA) \cong \cO$.

\begin{proof}
This is a special case of a standard construction, see for instance \cite[\S 2]{BakerRichter} or \cite[Th. 8.5.4.2]{HA}.  The method of constructing $\cA$, by splitting homotopy idempotents in a group ring, occurs again in our discussion of projective $\bA[G]$-modules, so let us indicate it here.  (The uniqueness of $\cA$ requires a more complicated argument).

Suppose $\cO = (\pi_0\bA)(\zeta_m)$, where $\zeta_m$ is a primitive $m$th root of unity and $m$ is prime to $p$.  To construct an extension $\cA$ of $\bA$ with the described properties, observe that if $C$ is any finite commutative group, the group structure on $C$ induces an $\mathrm{E}_{\infty}$-ring spectrum structure on $\bA[C] = \bigoplus_{c \in C} \bA$, whose induced structure on $\pi_0(\bA[C])$ is the usual ring structure on the group algebra $(\pi_0\bA)[C]$.  Since $m$ is prime to $p$, this group ring has an idempotent $e$ with $\pi_0 \bA[C] e = \cO$.  The idempotent provides a map of $\bA$-module spectra $\bA[C] \to \bA[C]$, and we let $\cA$ denote the homotopy limit of the sequence
\[
\bA[C] \stackrel{e}{\to} \bA[C] \stackrel{e}{\to} \cdots
\]
To see that $\cA$ is endowed with an $\mathrm{E}_{\infty}$-ring structure, note that we have canonically $\cA \cong L_{\cA} \bA[C]$ where $L_{\cA}$ denotes Bousfield localization.
\end{proof}

\subsection{$p$-completion}
Suppose $\bA$ is $p$-adic, and that $\varpi \in \pi_0 \bA$ is a uniformizer.  Let us say that $M \in \Mod(\bA)$ is 
\begin{itemize}
\item \emph{$L_{\hat{p}}$-acyclic} if $\varpi$ induces an isomorphism on $\pi_* M$
\item \emph{$p$-complete} (also called \emph{$L_{\hat{p}}$-local}) if $[N,M] = 0$ whenever $N$ is $L_{\hat{p}}$-acyclic.
\end{itemize}
For instance, $M$ is $p$-complete whenever $\pi_* M$ is finitely generated over $\pi_0 \bA$.  The $L_{\hat{p}}$-acyclic $\bA$-modules form a localizing subcategory and the theory of Bousfield localization produces an idempotent functor $L_{\hat{p}}:\Mod(\bA) \to \Mod(\bA)$ whose image is the full subcategory of $p$-complete modules.  ``Idempotent'' here means that $L_{\hat{p}}$ is equipped with a natural map $M \to L_{\hat{p}}M$ which is an isomorphism when $M$ is already $p$-complete.  The full subcategory of $p$-complete modules is denoted $L_{\hat{p}} \Mod(\bA)$.

\subsection{$\bA[G]$-modules}  Let $\bA$ be a good coefficient algebra.  By an $\bA[G]$-module we mean a functor from $BG$ to $\Mod(\bA)$ --- these form an $\infty$-category $\Fun(BG,\Mod(\bA))$.  We will also denote this category by $\LMod(\bA[G])$.  The Schwede-Shipley theorem \cite{SS}, \cite[\S7.1.2]{HA} shows that the notation is justified: there is an $\bA$-algebra spectrum $\bA[G]$ --- which we take to be $\bA[G] := \bA \wedge \Sigma^{\infty} G_+$ --- such that $\LMod(\bA[G])$ is the category of left modules over it.  Let us highlight some obvious constructions one can make with $\bA[G]$-modules, together with some explanation of how these constructions are carried out formally using the technology of \cite{HTT,HA}:

\subsubsection{Forgetful functor}  There is a forgetful functor $\LMod(\bA[G]) \to \Mod(\bA)$.  Formally, it is given by precomposing with the basepoint map $\mathit{pt} \to BG$.  As the basepoint map is essentially surjective, the forgetful functor is conservative.  When $\bA$ is $p$-adic we define the Bousfield localization $L_{\hat{p}}$ on $\LMod(\bA[G])$ by setting the $L_{\hat{p}}$-acyclic, resp. $p$-complete  modules to be those whose image in $\Mod(\bA)$ is $L_{\hat{p}}$-acyclic, resp. $p$-complete.

\subsubsection{The trivial module}  There is a trivial $\bA[G]$-module structure on $\bA$ itself, which we also denote by $\bA$.  Formally, it is the composition of the terminal map $BG \to \mathit{pt}$ with the map $\mathit{pt} \to \Mod(\bA)$ that picks out the object $\bA$.  Any object of $\Mod(\bA)$ can be given a trivial $G$-action in this way.

\subsubsection{$\delta$-functor}  There is a $2$-periodic $\delta$-functor on the homotopy category of $\LMod(\bA[G])$, taking values in the abelian category of $\pi_0\bA[G]$-modules, that we denote by $\pi_*$.

We denote the full subcategory of $\LMod(\bA[G])$ spanned by finite type modules --- modules $M$ whose homotopy groups $\pi_0 M,\pi_1 M$ are finitely generated over $\pi_0 \bA$  --- by $\LMod(\bA[G])^{\mathit{ft}}$.  In the present setting $\LMod(\bA[G])^{\mathit{ft}}$ is equivalent to the category of functors from $BG$ to $\Mod(\bA)^{\omega}$, what Mathew denotes by $\mathrm{Rep}(G,\bA)$ in the appendix.

\subsubsection{Tensor structure} The category $\LMod(\bA[G])$ inherits a symmetric monoidal structure from the symmetric monoidal structure on $\Mod(\bA)$.  Formally \cite[Def. 2.0.0.7]{HA}, the symmetric monoidal structure on $\Mod(\bA)$ is an identification of $\Mod(\bA)$ with the fiber of a suitable coCartesian fibration $\Mod(\bA)^{\otimes} \to \mathcal{F}\mathrm{in}_*$ above $\{0\}_*$, and we endow $\LMod(\bA[G])$ with a similar structure by taking $\LMod(\bA[G])^{\otimes}$ to be $\Fun(BG,\Mod(\bA)^{\otimes})$.

\subsubsection{Transformation and permutation modules}
\label{subsubsec:tandp}
If $X$ is a space with a left $G$-action, then $\bA[X]$ (the smash product of $\bA$ with the suspension spectrum of $X$) is naturally an object of $\LMod(\bA[G])$.  
There is a natural identification of $[\bA[X],\Sigma^i \bA]$ with the $G$-equivariant (Borel-style) $\bA$-cohomology of $X$, i.e. $\mathbf{A}^i(X_{hG})$.

We say that $\bA[X]$ is a \emph{transformation module.}  If $X$ is a finite $G$-set we say that $\bA[X]$ is a \emph{permutation module.}  When $Y$ is a $G$-stable subspace of $X$, we write $\bA[X,Y]$ for the cone on the induced map $\bA[Y] \to \bA[X]$.  Thus $[\bA[X,Y],\Sigma^i\bA]$ is identified with the $\bA$-cohomology of the pair $(X_{hG},Y_{hG})$.

\subsubsection{Induction and restriction}  If $H$ is a subgroup of $G$, precomposition with $BH \to BG$ gives a symmetric monoidal functor $\Fun(BG,\Mod(\bA)) \to \Fun(BH,\Mod(\bA))$.  We denote this functor by $\Res^G_H$.  It has a two-sided adjoint, that we denote by $\Ind_H^G$.  We have in particular $\Ind_H^G \bA[X] = \bA[G \times_H X]$.  

There is also a Mackey or base-change formula.  If $H$ and $K$ are two subgroups of $G$, then there is a natural isomorphism
\begin{equation}
\label{eq:Mackey}
\Res_K^G \Ind_H^G \stackrel{\sim}{\to} \bigoplus_x \Ind_{K \cap xH x^{-1}}^K \circ \mathrm{ad}_x \circ \Res_{x^{-1} K x \cap H}^H
\end{equation}
where the sum runs over double coset representatives for $K \backslash G /H$.  Formally, one can deduce \eqref{eq:Mackey} by appealing to the bimodule formalism we discuss next.

\subsubsection{Functors from bimodules}
\label{subsubsec:bimod}
  If $G_1$ and $G_2$ are finite groups, let us say that a $(G_1,G_2)$-bimodule is a left $\bA[G_1 \times G_2^{\op}]$-module.  There is a full embedding of this bimodule category into the functor category $\Fun(\LMod(\bA[G_2]),\LMod(\bA[G_1]))$, given by tensoring on the right \cite[Prop. 4.1]{BZFN}, \cite[\S4.8.4]{HA}.  It identifies the bimodule category with the full subcategory of continuous (i.e. colimit-preserving) functors.  
  
If $B$ is a $(G_1,G_2)$-bimodule and $C$ is a $(G_2,G_3)$-bimodule, their tensor product over $\KU_q[G_2]$ can be computed as the homotopy $G_2$-coinvariants of their tensor product over $\KU_q$, i.e.
\[
B \otimes_{\KU_q[G_2]} C \cong (B \otimes_{\KU_q} C)_{hG_2}
\]
where on the right $B \otimes_{\KU_q} C$ is regarded as a functor from $BG_2$ to the $\infty$-category of $(G_1,G_3)$-bimodules, using the diagonal inclusion $G_2 \to G_1 \times G_2^{\op} \times G_2 \times G_3^{\op}$.

If a $(G_1,G_2)$-bimodule is compact as a right $\bA[G_2]$-module, the corresponding functor carries $\LMod(\bA[G_2])^{\mathit{ft}}$ to $\LMod(\bA[G_1])^{\mathit{ft}}$.  (This is not true for a general bimodule --- even one of finite type.  When $\bA$ is the $p$-complete complex $K$-theory spectrum, it is closer to true.  We discuss this in  \S\ref{sec:Tate-van}.)

For example, given a finite group and subgroup $G \supset H$, the restriction and induction functors are represented by the permutation bimodule $\bA[G]$, where $G$ is endowed with a left action of $G$ and right action of $H$ or vice versa.  One can use this to deduce \eqref{eq:Mackey}: as a $(K,H)$-bimodule, the composition on the left is the tensor product $\bA[G] \otimes_{\bA[G]} \bA[G] \cong \bA[G]$, which is the permutation module given by the $K \times H^{\op}$-set $G$.

\subsection{Projective $\bA[G]$-modules}
\label{subsec:proj}

The homotopy category of left $\bA[G]$-modules has a big chunk familiar from the algebra of ordinary $\pi_0 \bA[G]$-modules: each projective $\pi_0\bA[G]$-module can be lifted in a unique way to an $\bA[G]$-module spectrum, and the homomorphisms between them are the same at least at the level of homotopy classes.

\begin{theorem*}
For $M \in \LMod(\bA[G])$, the following are equivalent:
\begin{enumerate}
\item $\pi_* M$ is a free $\pi_*\bA[G]$-module (resp. a projective $\pi_* \bA[G]$-module).
\item $M$ is isomorphic to a module of the form $\bA[G]^{\oplus i} \oplus \Sigma \bA[G]^{\oplus j}$ (resp. isomorphic to a summand of such a module).
\end{enumerate}
Write $\Free(\bA[G]) \subset \Proj(\bA[G]) \subset \LMod(\bA[G])$ for the full subcategories spanned by the free and projective objects.  Then $\pi_*$ induces an equivalence from the homotopy category of $\Free(\bA[G])$ onto the category of $\bZ/2$-graded free $\pi_0\bA[G]$-modules, and from the homotopy category of $\Proj(\bA[G])$ onto the category of $\bZ/2$-graded projective $\pi_0\bA[G]$-modules.
\end{theorem*}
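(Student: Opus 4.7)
The plan is to first pin down the mapping theory out of free $\bA[G]$-modules, from which the free case of the theorem follows directly, and then to extend everything to the projective case by splitting idempotents.

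The starting point is the universal property of $\bA[G]$ as the free $\bA[G]$-module on one generator, which gives $[\bA[G], M] \cong \pi_0 M$ and $[\Sigma\bA[G], M] \cong \pi_1 M$ naturally in $M \in \LMod(\bA[G])$. For any free module $F = \bA[G]^{\oplus i} \oplus \Sigma\bA[G]^{\oplus j}$ these combine, using that $\pi_*\bA[G]$ is concentrated in even degrees, into a natural bijection
\[
[F,M] \stackrel{\sim}{\to} \Hom^{\bZ/2}_{\pi_*\bA[G]}(\pi_*F, \pi_*M).
\]
Essential surjectivity of $\pi_*$ on the homotopy category of $\Free(\bA[G])$ is evident by choosing ranks, so the equivalence for free modules is established as soon as (1)$\Rightarrow$(2) is proved. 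For that direction, a $\pi_*\bA[G]$-basis for $\pi_*M$ assembles (via the displayed bijection) into a map $F \to M$ inducing an isomorphism on $\pi_*$; since $\bA[G]$ is a compact generator of $\LMod(\bA[G])$, the functor $\pi_*$ is conservative (a cofiber with vanishing homotopy groups must be zero), so $F \xrightarrow{\sim} M$. The converse (2)$\Rightarrow$(1) is immediate.

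For the projective case, given a projective $\pi_*\bA[G]$-module $P$, pick $Q$ with $P \oplus Q$ free and lift to a free $\bA[G]$-module $\tilde F$. The idempotent $e$ on $\pi_*\tilde F$ projecting onto $P$ lifts via the free-case bijection to $\tilde e \colon \tilde F \to \tilde F$; since that bijection is faithful on endomorphisms, $\pi_*(\tilde e^2) = \pi_*(\tilde e)$ forces $\tilde e^2 = \tilde e$ in the homotopy category. Because $\LMod(\bA[G])$ admits sequential colimits, \cite[Prop. 4.4.5.20]{HTT} upgrades the homotopy idempotent $\tilde e$ to a coherent one, which then splits as $\tilde F = \tilde P \oplus \tilde Q$ with $\pi_*\tilde P = P$; this handles essential surjectivity on projectives. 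For (1)$\Rightarrow$(2), given $M$ with $\pi_*M \cong P$, use the free-module mapping property to choose $\tilde F \to M$ realizing the composite $\pi_*\tilde F \twoheadrightarrow P \xrightarrow{\sim} \pi_*M$, restrict to $\tilde P$, and observe that this restriction induces the identity on $\pi_*$ and is therefore an equivalence. The direction (2)$\Rightarrow$(1) is just that $\pi_*$ preserves summands. Finally, fully faithfulness on projectives drops out of the identification $[\tilde P, \tilde P'] \cong \tilde e' \circ [\tilde F, \tilde F'] \circ \tilde e$ combined with the free-case bijection, which identifies the right-hand side with $\Hom^{\bZ/2}_{\pi_0\bA[G]}(\pi_*\tilde P, \pi_*\tilde P')$.

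The main obstacle is the idempotent-splitting step: a homotopy idempotent in an $\infty$-category is weaker than a coherent one, so splitting does not come for free. The sequential-colimit hypothesis and the corresponding result from \cite{HTT} is the standard remedy; an alternative would be to construct $\tilde P$ directly as the telescope $\mathrm{colim}(\tilde F \xrightarrow{\tilde e} \tilde F \xrightarrow{\tilde e} \cdots)$, but this too needs the coherence lift to specify the diagram. Once this point is granted, everything else amounts to bookkeeping with the $2$-periodicity of $\pi_*\bA[G]$.
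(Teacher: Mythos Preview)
Your proof is correct and follows essentially the same approach as the paper: use $\pi_i M \cong [\Sigma^i\bA[G], M]$ to settle the free case, then lift idempotents along the resulting bijection on endomorphisms and split them in $\LMod(\bA[G])$ for the projective case. The paper is terser and simply asserts that idempotents split in $\LMod(\bA[G])$; the coherence issue you raise is handled by stability (cf.\ \cite[Lem.~1.2.4.6]{HA}) rather than the result from \cite{HTT} you cite, but the conclusion stands.
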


\begin{proof}
For free modules, the theorem follows (as in the proof of Proposition \ref{thm:coefficients})
from the observation that $\pi_i M \cong [\Sigma^i \bA[G],M]$.  Since $\pi_*:[M,N] \to \Hom(\pi_*M,\pi_*N)$ is an isomorphism whenever $M$ is free, it follows that idempotents in $\Hom(\pi_*M,\pi_*M)$ and in $[M,M]$ are in one-to-one correspondence.  But idempotents split in $\LMod(\bA[G])$.  
\end{proof}

\subsection{Projective covers}
\label{subsec:cov}

If $P \in \LMod(\bA[G])$ is projective, it is easy to compute $[P,M]$ for any $\bA[G]$-module $M$:

\begin{prop*}
Let $M,P \in \LMod(\bA[G])$, and suppose $P$ is projective.  Then $[P,M]$ is isomorphic to $\Hom(\pi_*(P),\pi_*(M))$, where $\Hom$ is taken in the category of $\bZ/2$-graded $\pi_0\bA[G]$-modules.
\end{prop*}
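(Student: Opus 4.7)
The plan is to reduce to the free case, where the statement is essentially a rephrasing of the definition $\pi_i(-) = [\Sigma^i \bA[G], -]$. First I would observe that both sides of the proposed isomorphism are additive functors of $P$ in the homotopy category and carry summands to summands, so by the preceding Theorem (characterizing projectives as summands of $\bA[G]^{\oplus i} \oplus \Sigma \bA[G]^{\oplus j}$), it suffices to prove the statement when $P$ is a free $\bA[G]$-module of that form.

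Next I would treat the generators $P = \bA[G]$ and $P = \Sigma \bA[G]$ separately. Using the adjunction between $\bA[G]$ and $M$ (equivalently, the defining identification of homotopy groups), we have $[\bA[G], M] \cong \pi_0 M$ and $[\Sigma \bA[G], M] \cong \pi_1 M$ as $\pi_0 \bA[G]$-modules. On the other side, if $P = \bA[G]$ then $\pi_*(P)$ is the free $\bZ/2$-graded $\pi_0 \bA[G]$-module on a generator in degree $0$, so $\Hom_{\pi_0\bA[G]}(\pi_*(P), \pi_*(M)) = \pi_0 M$; similarly for $\Sigma \bA[G]$ we get $\pi_1 M$. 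The canonical map from $[P,M]$ sending a morphism to its action on $\pi_*$ is visibly the identification in each case.

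Extending by finite direct sums to all $P$ of the form $\bA[G]^{\oplus i} \oplus \Sigma \bA[G]^{\oplus j}$ is then immediate, since $\Hom$ out of a direct sum splits as a product on both sides, and the natural comparison map is compatible with this splitting. Finally, for a general projective $P$, writing $P$ as a retract of such a free module $F$, naturality of the comparison map with respect to the idempotent $F \to F$ (and its splitting) identifies $[P,M]$ with the corresponding summand of $[F,M]$, which matches the corresponding summand of $\Hom_{\pi_0\bA[G]}(\pi_*(F), \pi_*(M))$, namely $\Hom_{\pi_0\bA[G]}(\pi_*(P), \pi_*(M))$.

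I do not anticipate a serious obstacle here; the only subtle point is verifying that the natural map from $[P,M]$ to $\Hom_{\pi_0\bA[G]}(\pi_*P, \pi_*M)$ is compatible with the idempotent splittings, but this is automatic from functoriality of $\pi_*$. The result is thus a direct consequence of the preceding theorem together with the universal property of $\bA[G]$ as a representing object for $\pi_0$.
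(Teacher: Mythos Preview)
Your proposal is correct and follows essentially the same route as the paper: reduce to the free case via the retract characterization of projectives, then invoke the identification $[\Sigma^i \bA[G], M] \cong \pi_i M$. The paper's proof is terser (it phrases the reduction as ``extend a map $\pi_* P \to \pi_* M$ to $\pi_* P \oplus \pi_* Q \to \pi_* M$'' rather than tracking an idempotent), but the content is the same; one minor slip is your phrase ``finite direct sums'', since the free modules in question may be infinite, though your own justification (``$\Hom$ out of a direct sum splits as a product on both sides'') already handles the general case.
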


\begin{proof}
For some $\bA[G]$-module $Q$, $P\oplus Q$ is free, and we can always extend a map $\pi_* P \to \pi_* M$ to a map $\pi_* P \oplus \pi_* Q \to \pi_* M$.  It therefore suffices to prove the proposition in the case that $P$ is free.  But for a free module we have $[P,M] \cong \Hom(\pi_*(P),\pi_*(M))$.
\end{proof}

Recall that a \emph{projective cover} of a $\pi_0 \bA[G]$-module $\overline{M}$ is a surjective map $\overline{f}:\overline{P} \to \overline{M}$ such that $\overline{f}$ and such that the restriction of $\overline{\pi}$ to any proper submodule fails to be surjective.  If they exist, projective covers are unique up to isomorphism.  (For example, if $\pi_0 \bA$ is a complete discrete valuation ring then, being free of finite rank over $\pi_0 \bA$, the ring $\pi_0\bA[G]$ is semiperfect in the sense of \cite[\S 2.1]{Bass}.  One characterization of semiperfect rings from loc. cit. is that every finitely generated left module has a projective cover.)

The discussion in this and the previous section shows that, when $\overline{M} = \pi_* M$ for $M \in \LMod(\bA[G])$, any $\bZ/2$-graded projective cover $\overline{f}:\overline{P} \to \overline{M}$ is the image under $\pi_*$ of a unique homotopy class of maps $f:P \to M$ in $\LMod(\bA[G])$.

\subsection{Blocks}
\label{subsec:blocks}
Let $\bA$ be a good algebra, and suppose that $\pi_0 \bA$ is a complete discrete valuation ring of mixed characteristic $p$.  For $\pi_0 \bA[G]$, as for any semiperfect ring, the following are in one-to-one correspondence:
\begin{enumerate}
\item Decompositions of $1 \in \pi_0\bA[G]$ as $1 = b_1 + \cdots + b_k$, where the $b_i$ are central idempotents.
\item Decompositions of the additive category of $\pi_0 \bA[G]$-modules as a direct sum $\cC_1 \oplus \cdots \oplus \cC_k$
\item Decompositions of the additive category of projective $\pi_0 \bA[G]$-modules as a direct sum $\cP_1 \oplus \cdots \oplus \cP_k$.
\end{enumerate}
The correspondence is obtained by taking for $\cC_i$ (resp. $\cP_i$) the full subcategory of modules (resp. projective modules) on which $b_i$ acts as the identity.  If the $b_i$ are primitive, they are called ``block idempotents'' and the categories $\cC_i \supset \cP_i$ are called ``blocks.''  The block categories $\cC_i$ and $\cP_i$ are indecomposable: they cannot be written as a direct sum of two nonzero additive categories.

We can transport these notions to $\bA[G]$, by the following device.  If $b$ is any central idempotent in $\pi_* \bA[G]$, let us say that $M$ is \emph{$b$-acyclic} if the induced map $\pi_* M \to \pi_* M$ is zero.

\begin{prop*}
The $b$-acyclic objects of $\LMod(\bA[G])$ are the acyclic objects for a Bousfield localization $L_b$ of $\LMod(\bA[G])$.  The essential image of $L_b$ is the full subcategory of $(1-b)$-acyclic objects, and the pair $L_b,L_{1-b}$ induce an equivalence
\[
\LMod(\bA[G]) \cong L_b \LMod(\bA[G]) \oplus L_{1-b}\LMod(\bA[G])
\]
\end{prop*}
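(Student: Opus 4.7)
The plan is to lift the central idempotent $b$ to a splitting of $\bA[G]$ at the level of associative $\bA$-algebras, which then induces the desired decomposition of module categories.

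First, since $\pi_*\bA[G] = \pi_0\bA[G][\beta^{\pm 1}]$ is even-graded, $b$ necessarily lives in $\pi_0\bA[G]$. Applying the theorem of \S\ref{subsec:proj} to the free left module $\bA[G]$, self-maps in the homotopy category are identified with $\pi_0\bA[G]^{\op}$ acting by right multiplication, so $b$ lifts to an idempotent self-map $\phi_b$, which splits to give a left $\bA[G]$-module decomposition $\bA[G] \simeq \bA[G]b \oplus \bA[G](1-b)$ realizing $b\cdot\pi_*\bA[G]$ and $(1-b)\cdot\pi_*\bA[G]$ on homotopy.

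Second, I would upgrade this to a decomposition of associative ring spectra. Centrality of $b$ says that $\phi_b$ commutes on $\pi_0$ with every right multiplication $\phi_x$, and hence commutes up to homotopy by \S\ref{subsec:proj}. To promote this to compatibility with the full $\mathrm{E}_1$-multiplication on $\bA[G]$, one bounds the relevant obstructions: a would-be nontrivial map $\bA[G]b \otimes_\bA \bA[G] \to \bA[G](1-b)$ is a map between projective left $\bA[G]$-modules (both are summands of the free left module $\bA[G] \otimes_\bA \bA[G]$), hence detected on $\pi_*$ by \S\ref{subsec:cov}, and on $\pi_*$ it is forced to vanish by $b(1-b) = 0$. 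The outcome is an equivalence $\bA[G] \simeq \bA[G]b \times \bA[G](1-b)$ of associative $\bA$-algebras.

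The product decomposition of rings induces a product of module categories, $\LMod(\bA[G]) \simeq \LMod(\bA[G]b) \times \LMod(\bA[G](1-b))$. Inspecting homotopy groups shows that an $\bA[G]$-module lies in the first factor exactly when $(1-b)$ annihilates its homotopy (i.e., it is $(1-b)$-acyclic) and in the second factor exactly when it is $b$-acyclic. Taking $L_b$ to be projection onto the first factor followed by fully faithful inclusion back into $\LMod(\bA[G])$ gives the claimed Bousfield localization, with essential image the $(1-b)$-acyclics. The main obstacle is the second step, coherently upgrading the left-module splitting to an $\mathrm{E}_1$-splitting; this is exactly where the good-coefficients hypothesis earns its keep, since \S\ref{subsec:proj}--\S\ref{subsec:cov} allow one to compute the obstruction groups at the level of $\pi_0\bA[G]$-modules, where the idempotent relation $b(1-b) = 0$ makes them vanish on the nose.
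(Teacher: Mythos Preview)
Your approach is genuinely different from the paper's, and the second step contains a real gap.

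The paper never attempts to lift $b$ to an $\mathrm{E}_1$-splitting of $\bA[G]$. Instead it argues directly at the level of the module category: $b$-acyclic objects are closed under arbitrary direct sums (since $\pi_*$ commutes with sums), and closed under cofibers (if $b$ kills $\pi_*M'$ and $\pi_*M$, the long exact sequence forces $b$ to act nilpotently on $\pi_*M''$; but an idempotent nilpotent is zero). That is the entire argument for the existence of $L_b$; the rest is formal once both $L_b$ and $L_{1-b}$ exist. No good-coefficients hypothesis is invoked beyond the fact that $\pi_*$ is a conservative $\delta$-functor.

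Your step~2 is where things go wrong. Showing that the single mapping group $[\bA[G]b \otimes_\bA \bA[G],\,\bA[G](1-b)]$ vanishes tells you only that the multiplication map, restricted to that summand, is \emph{null-homotopic}; it does not by itself produce a coherent $\mathrm{E}_1$-algebra structure on $\bA[G]b$ together with an $\mathrm{E}_1$-map $\bA[G]\to\bA[G]b$. An $\mathrm{E}_1$-structure is an infinite tower of compatibilities, and you have checked the first rung. There \emph{is} a correct general statement --- a central idempotent in $\pi_0$ of any $\mathrm{E}_1$-ring yields a product decomposition, via the smashing localization inverting $b$ --- but this holds with no hypotheses on the coefficients whatsoever, so your claim that ``this is exactly where the good-coefficients hypothesis earns its keep'' is mistaken. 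If you want to go this route, invoke the general result rather than an ad hoc obstruction calculation; but the paper's three-line argument via localizing subcategories is both shorter and more transparent.
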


\begin{proof}
The class of $b$-acyclic objects is closed under infinite direct sums, since $\pi_*(\bigoplus M_i) \cong \bigoplus \pi_*(M_i)$.  If $M''$ is the cone on a map $M' \to M$, and both $M'$ and $M$ are $b$-acyclic, then by the long exact sequence in homotopy $b$ must act nilpotently on $\pi_* M''$.  If $b$ acts nilpotentely and idempotently, then $b$ is zero.  This shows that the $b$-acyclic objects form a localizing subcategory of $\LMod(\bA[G])$.  
\end{proof}

If $b_1,\ldots,b_n$ are the block idempotents of $\pi_0 \bA[G]$, the proposition gives us a decomposition of stable $\infty$-categories
\[
\LMod(\bA[G]) \cong L_{b_1} \LMod(\bA[G]) \oplus \cdots \oplus L_{b_n} \LMod(\bA[G])
\]
\begin{prop*}
Let $b_i$ be a block idempotent of $\pi_0 \bA[G]$.  Then $L_{b_i} \LMod(\bA[G])$ is indecomposable, in the sense that it cannot be written as a direct sum of two nonzero stable $\infty$-categories.
\end{prop*}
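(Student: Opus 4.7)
The plan is to derive a contradiction from an assumed nontrivial decomposition $L_{b_i}\LMod(\bA[G]) \simeq \cD_1 \oplus \cD_2$ (with both $\cD_j$ nonzero) by analyzing the endomorphism ring of the projective generator $P := b_i \bA[G]$.

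A direct sum decomposition of a stable $\infty$-category assigns to each object $X$ a splitting $X \cong X_1 \oplus X_2$ with $X_j \in \cD_j$, and the projection $e_X \colon X \to X_1 \hookrightarrow X$ is idempotent and natural in $X$. Specializing to $P$, naturality forces $e_P$ to lie in the center of $[P,P]$. By the theorem of \thref{subsec:proj}, $\pi_*$ identifies $[P,P]$ with $\End_{\pi_0 \bA[G]}(b_i \pi_0 \bA[G])$, and since $b_i$ is central this is the opposite ring of $b_i \pi_0 \bA[G]$. A short computation shows that the center of $b_i \pi_0 \bA[G]$ is $b_i \cdot Z(\pi_0 \bA[G])$, so central idempotents of $[P,P]$ correspond bijectively to central idempotents of $\pi_0 \bA[G]$ that lie below $b_i$. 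Primitivity of $b_i$ leaves only $e_P = 0$ or $e_P = \mathrm{id}_P$.

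If $e_P = \mathrm{id}_P$, then $P \in \cD_1$. For any $M \in \cD_2$, the orthogonality of the summands gives $[\Sigma^k P, M] = 0$ for all $k \in \bZ$; the adjunction between $L_{b_i}$ and its inclusion (together with $L_{b_i}$-locality of $M$) identifies this with $[\Sigma^k \bA[G], M] \cong \pi_{-k} M$, so $\pi_* M = 0$ and hence $M = 0$, forcing $\cD_2 = 0$. The case $e_P = 0$ symmetrically yields $\cD_1 = 0$, contradicting the assumption in either case.

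The main substance of the argument is the identification of central idempotents in $[P,P]$ with central idempotents of $\pi_0 \bA[G]$ dominated by $b_i$; the rest is routine bookkeeping with the Bousfield localization $L_{b_i}$ and the fact that $\pi_*$ is conservative on $\LMod(\bA[G])$.
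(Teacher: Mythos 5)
Your proof is correct and follows essentially the same route as the paper's: both transfer the question to projective $\pi_0\bA[G]b_i$-modules via the equivalence of \S\ref{subsec:proj}, both use the primitivity of $b_i$ to see that the generator $b_i\bA[G]$ cannot split between the two summands, and both then kill the complementary summand using $[P,M]\cong\Hom(\pi_*P,\pi_*M)$ together with conservativity of $\pi_*$. The only cosmetic difference is that you re-derive the needed indecomposability from the center of $[P,P]\cong (b_i\pi_0\bA[G])^{\op}$, whereas the paper cites the indecomposability of the additive category of projective modules over the block algebra.
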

\begin{proof}
We will show that if $L_{b_i} \LMod(\bA[G]) \cong \cC_1 \oplus \cC_2$, then either $\cC_1$ or $\cC_2$ is the zero category.  Under the equivalence of \S\ref{subsec:proj} between the homotopy category of $\Proj(\bA[G])$, and the category of $\bZ/2$-graded projective modules over $\pi_*\bA[G]$, the $L_{b_i}$-local projective $\bA[G]$-modules map to $\bZ/2$-graded projective modules over $\pi_*\bA[G]b_i$.  

Now suppose $L_{b_i} \LMod(\bA[G]) \cong \cC_1 \oplus \cC_2$.  Then, since the category of projective $\pi_*\bA[G]b_i$-modules is indecomposable, projective objects in $L_{b_i} \LMod(\bA[G])$ are a subset of the objects of either $\cC_1$ or $\cC_2$.  Suppose it is $\cC_1$, and let us show that $\cC_2$ is zero.  Indeed any $M \in \cC_2$ must have $[P,M] = 0$ for every projective $P$, by \S\ref{subsec:cov}.
\end{proof}

The Theorem of \S\ref{intro:blocks} is an immediate consequence of the previous two propositions, i.e.:

\begin{theorem}
Suppose $\bA$ is a good algebra and $\pi_0\bA$ is a complete discrete valuation ring of mixed characteristic $p$.  Then $M$ belongs to a block of $\LMod(\bA[G])$ if and only if $\pi_0(M)$ and $\pi_1(M)$ belong to a single block of $\pi_0\bA[G]$-modules.  The induced map on blocks is a bijection.
\end{theorem}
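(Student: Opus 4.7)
The plan is to string together the two preceding propositions. Let $b_1, \ldots, b_n$ be the primitive central idempotents of $\pi_0\bA[G]$, giving an orthogonal decomposition $1 = b_1 + \cdots + b_n$. I would apply the first proposition iteratively: first split off $L_{b_1}\LMod(\bA[G])$ from $L_{1-b_1}\LMod(\bA[G])$; inside the latter summand, $b_2$ remains a central idempotent of $\pi_0\bA[G]$ (acting nontrivially on the relevant piece), so split again; continue. The output is a decomposition of stable $\infty$-categories
\[
\LMod(\bA[G]) \cong \bigoplus_{i=1}^n L_{b_i}\LMod(\bA[G]).
\]

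Next, by the second proposition each summand $L_{b_i}\LMod(\bA[G])$ is indecomposable as a stable $\infty$-category, so any further direct-sum decomposition of $\LMod(\bA[G])$ into nonzero summands must refine the one above. Consequently, the $L_{b_i}\LMod(\bA[G])$ are exactly the blocks of $\LMod(\bA[G])$. For the homotopy-group characterization, I would observe that an object $M$ lies in $L_{b_i}\LMod(\bA[G])$ precisely when $b_i$ acts as the identity on $\pi_* M$ and each other $b_j$ acts as zero; this is the same as the condition that both $\pi_0 M$ and $\pi_1 M$ belong to the block of $\pi_0\bA[G]$-modules corresponding to $b_i$. The induced map on blocks is then a bijection, since both sides are indexed by $\{b_1, \ldots, b_n\}$.

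The only mildly subtle point is to confirm that the direct-summand notion of block used in this section agrees with the connected-components-of-indecomposables description from the introduction. This is what the second proposition's indecomposability statement is for: stable $\infty$-categorical indecomposability subsumes graph-connectedness of the indecomposable objects, so the two notions of block coincide here. Once this is settled, the theorem is genuinely a formal consequence of the two propositions, as the author claims.
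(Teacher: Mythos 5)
Your argument is correct and is essentially the paper's own proof: the paper simply declares the theorem ``an immediate consequence of the previous two propositions,'' after recording (as you do) the $n$-fold decomposition $\LMod(\bA[G]) \cong L_{b_1}\LMod(\bA[G]) \oplus \cdots \oplus L_{b_n}\LMod(\bA[G])$ obtained by applying the first proposition to each block idempotent, and using the second proposition to see each summand is indecomposable, with membership in $L_{b_i}\LMod(\bA[G])$ detected by $b_i$ acting as the identity on $\pi_*M$. Your closing remark reconciling the direct-summand and graph-of-indecomposables notions of block is a reasonable addition that the paper leaves implicit.
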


\subsection{Adams filtration}
\label{subsec:adamsfilt}
Given $M,N \in \LMod(\bA[G])$, let $[M,N]^{\geq k}$ denote the group of homotopy classes of maps that can be factored as 
\[
M \to U_1 \to U_2 \to \cdots \to U_k \to N
\]
with the property that $\pi_*(M) \to \pi_*(U_1)$ and $\pi_*(U_i) \to \pi_*(U_{i+1})$ for $i \leq k - 1$ vanish.  We may describe this filtration alternatively using a projective resolution of $M$.

Let us call a sequence of homotopy classes of maps of $\bA[G]$-modules
\[
M \leftarrow P_0 \leftarrow P_1 \leftarrow P_2 \leftarrow \cdots
\]
a projective resolution of $M$ if each $P_i$ is projective and the sequence induces a projective resolution on $\pi_*$.  By the discussion of \S\ref{subsec:proj}--\S\ref{subsec:cov}, isomorphism classes of projective resolutions of $M$ are in one-to-one correspondence with isomorphism classes of $\bZ/2$-graded projective resolutions of the $\pi_0 \bA[G]$-module $\pi_* M$.  We define  modules $Y_i$ and maps $P_i \to Y_i \to P_{i-1}$ inductively, as indicated in the following diagram:
\[ 
\xymatrix{
& & Y_1 \ar[dl]  & & & & Y_3 \ar[dl] \\
& P_0 \ar[dl] & &  P_1 \ar[ul] \ar[ll] & & P_2 \ar[dl] \ar[ll] & & \ar[ll] \ar[ul]  \cdots \\
M & & & & Y_2 \ar[ul]
}
\] 
where each diagonal line is a fiber sequence.  As $\bA$ is a good coefficient algebra, the $Y_i$ are all $\bA[G]$-modules whose underlying $\bA$-module is free.  Each map $P_i \to Y_i$ induces a surjection on homotopy groups.  The connecting maps $Y_i \to \Sigma Y_{i+1}$ assemble to a sequence
\[
M \to \Sigma Y_1 \to \Sigma^2 Y_2 \to \cdots
\]
that is called the ``Adams tower'' of the projective resolution.

\begin{prop*}
Let $M \leftarrow P_{\bullet}$ be a projective resolution of a $\bA[G]$-module $M$, and let 
\[
M \to \Sigma Y_1 \to \Sigma^2 Y_2 \to \cdots
\]
be the induced Adams tower.  Let $N$ be any other $\bA[G]$-module.  Then $[M,N]^{\geq k}$ is the image of $[\Sigma^k Y_k,N]$ in $[M,N]$.
\end{prop*}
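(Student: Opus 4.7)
The plan is to prove the two inclusions between $[M,N]^{\geq k}$ and the image of $[\Sigma^k Y_k, N]$ in $[M,N]$, using the long exact sequences that come from the fiber sequences $Y_{i+1} \to P_i \to Y_i$ together with the key fact (from \S\ref{subsec:cov}) that a homotopy class of maps out of a projective $\bA[G]$-module is determined by its effect on $\pi_*$, so in particular vanishes as soon as it vanishes on $\pi_*$.

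First I would check the easy inclusion: the image of $[\Sigma^k Y_k, N]$ lies in $[M, N]^{\geq k}$. For this it suffices to observe that every link $\Sigma^i Y_i \to \Sigma^{i+1} Y_{i+1}$ in the Adams tower (and the first map $M \to \Sigma Y_1$) induces zero on $\pi_*$. Indeed, these are the connecting maps of the fiber sequences $Y_{i+1} \to P_i \to Y_i$ (resp. $Y_1 \to P_0 \to M$), and the middle-to-outer map $\pi_*(P_i) \to \pi_*(Y_i)$ (resp. $\pi_*(P_0) \to \pi_*(M)$) is surjective by construction of the projective resolution, so by the long exact sequence in $\pi_*$ the connecting map vanishes. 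Thus any composition $M \to \Sigma Y_1 \to \cdots \to \Sigma^k Y_k \to N$ is already a presentation as a composite of $k$ maps each zero on $\pi_*$.

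The harder inclusion is to show that if $f \in [M,N]^{\geq k}$, then $f$ factors through the Adams tower map $M \to \Sigma^k Y_k$. Write $f = f_k \circ f_{k-1} \circ \cdots \circ f_0$ with $f_0 \colon M \to U_1$, $f_i \colon U_i \to U_{i+1}$ for $1 \leq i \leq k-1$, and $f_k \colon U_k \to N$, where $f_0, \ldots, f_{k-1}$ each induce zero on $\pi_*$. I would prove by induction on $j = 0, 1, \ldots, k$ that the partial composite $M \to U_1 \to \cdots \to U_j$ factors through the Adams tower as $M \to \Sigma^j Y_j \xrightarrow{h_j} U_j$. The base case $j=0$ is trivial. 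For the inductive step, consider the map $f_j \circ h_j \colon \Sigma^j Y_j \to U_{j+1}$; since $f_j$ induces zero on $\pi_*$, so does $f_j \circ h_j$. The rotated fiber sequence $\Sigma^j P_j \to \Sigma^j Y_j \to \Sigma^{j+1} Y_{j+1}$ yields the exact sequence
\[
[\Sigma^{j+1} Y_{j+1}, U_{j+1}] \to [\Sigma^j Y_j, U_{j+1}] \to [\Sigma^j P_j, U_{j+1}],
\]
so $f_j \circ h_j$ lifts to $h_{j+1} \colon \Sigma^{j+1} Y_{j+1} \to U_{j+1}$ precisely when its pullback to $\Sigma^j P_j$ vanishes as a homotopy class. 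That pullback induces zero on $\pi_*$ (since $f_j \circ h_j$ does), and because $\Sigma^j P_j$ is projective, the Proposition of \S\ref{subsec:cov} forces the pullback to be zero on the nose. This yields the required $h_{j+1}$, completing the induction. Applying this for $j=k$ and then post-composing with $f_k$ exhibits $f$ as the image of $f_k \circ h_k \in [\Sigma^k Y_k, N]$.

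The main obstacle is conceptual rather than computational: recognizing that the combination of the triangulated exact sequences coming from the Adams tower with the rigidity property ``zero on $\pi_*$ implies zero'' for projective modules is exactly what converts vanishing-on-homotopy into an actual null-homotopy, and hence into a lift one step up the tower. Once that observation is in place, the induction is routine.
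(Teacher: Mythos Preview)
Your proof is correct and follows essentially the same approach as the paper's. Both proofs handle the easy inclusion by noting the Adams tower maps vanish on $\pi_*$, and both handle the hard inclusion by an induction that uses projectivity of $P_j$ (via the Proposition of \S\ref{subsec:cov}) to convert ``zero on $\pi_*$'' into an actual null-homotopy; the paper phrases the inductive step by lifting $P_j \to Y_j$ through the fiber $Q_j$ of $Y_j \to U_{j+1}$, while you phrase it dually using the long exact sequence for $[-,U_{j+1}]$ applied to the cofiber sequence $\Sigma^j P_j \to \Sigma^j Y_j \to \Sigma^{j+1} Y_{j+1}$, but these are the same argument.
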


\begin{proof}
By definition, $f \in [M,N]$ is in the image of $[\Sigma^k Y_k,N]$ if and only if there is a map $f_k:\Sigma^k Y_k \to N$ such that $f$ is homotopic to the composite
\[
M \to \Sigma^1 Y_1 \to \cdots \to \Sigma^k Y_k \to N
\]
This establishes that $[\Sigma^k Y_k,N]$ maps to $[M,N]^{\geq k}$.  To show the reverse inclusion, suppose we have a factorization $M \to U_1 \to \cdots \to U_k \to N$ and let us show that we can fill in the dotted arrows in the diagram
\[
\xymatrix{
M \ar[r] \ar[dr] & \Sigma^{1} Y_1 \ar[r] \ar@{-->}[d] & \cdots \ar[r] & \Sigma^{k} Y_k  \ar@{-->}[d] \\
& U_1 \ar[r] & \cdots \ar[r] & U_k  \ar[r] & N
}
\]
Let $Q_0$ be the homotopy fiber of the map $M \to U_1$.  Since $M \to U_1$ is zero on homotopy groups, $Q_0 \to M$ is surjective on homotopy groups.  It follows from the projectivity of $P_0$ that we may find a map $P_0 \to M$ such that the diagram
\[
\xymatrix{
P_0 \ar[d] \ar[r] & M \ar@{=}[d] \ar[r] & \Sigma^{1} Y_1  \\
Q_0 \ar[r] & M  \ar[r] & U_1  
}
\]
commutes and has exact rows.  The existence of $\Sigma^1 Y_1 \to U_1$ follows.  We repeat this construction $k$ times until we reach a map $\Sigma^k Y_k \to U_k$.
\end{proof}

\begin{example*}
The trivial module $\bA$ can be identified with the transformation module $\bA[EG]$, where $G$ is the tautological $G$-bundle over the classifying space $BG$.  Let $BG_{\leq n}$ be the $n$-skeleton of the standard CW structure on $BG$, and $EG_{\leq n}$ its preimage.  Then $\Sigma^{-n} \bA[EG_{\leq n},EG_{\leq n-1}]$ is a free $\bA[G]$-module with generators in bijection with the $n$-cells of $BG$.  The Adams tower is
\begin{equation}
 \bA[EG] \to \bA[EG,EG_{\leq 0}] \to \bA[EG,EG_{\leq 1}] \to \cdots
\end{equation}
Thus, the Adams filtration of $[\bA,\bA] \cong \bA^i(BG)$ is the usual Atiyah-Hirzebruch filtration --- the $n$th filtered piece is the kernel of the restriction map $\bA^*(BG) \to \bA^*(BG_{\leq n-1})$.  More generally, this shows that the Adams filtration on $[\bA,M] = \pi_0(M^{hG})$ is Hausdorff.
\end{example*}

\subsection{Adams spectral sequence}
Let $M \leftarrow P_{\bullet}$ be a projective resolution of an $\bA[G]$-module $M$, as in \S\ref{subsec:adamsfilt}.    Let $M \to \Sigma^1 Y_1 \to \Sigma^2 Y_2 \to \cdots$ be the associated Adams tower.  Define bigraded groups
\[
A^{s,t} = [\Sigma^{-t} Y_s,N] \qquad E_1^{s,t} = [\Sigma^{-t} P_s,N]
\]
where $Y_0 = M$ and $Y_s = P_s = 0$ for $s < 0$.  We evidently have an exact couple
\[
\xymatrix{
A \ar[rr]^i & & A \ar[dl]^j \\
& E_1 \ar[ul]^k
}
\qquad
\begin{array}{c}
\\ \\ \\ \\
\text{i.e.}
\end{array}
\qquad
\xymatrix{
A^{s+1,t} & A^{s+1,t-1} \ar[rr] & & A^{s,t} \ar[dl] \\
& & E_1^{s,t} \ar[ull]
}
\]
This leads to a spectral sequence $(E_r,d_r)$ where $d_r$ has degree $(r,1-r)$.  On the first page the differential  $E_1^{s,t} \to  E_1^{s+1,t}$ is induced by the map $P_{s+1} \to P_s$.  Since $\pi_* P_{\bullet} \to \pi_* M$ is a projective resolution, we have
\begin{equation}
\label{eq:E2st}
E_2^{s,t} = \Ext^s(\pi_* M,\pi_{* - t} N) 
\end{equation}

The $\Ext$ is taken in the abelian category of $\bZ/2$-graded $\pi_0\bA[G]$-modules.  
When $M = \bA$, the spectral sequence converges --- in fact the $E_{\infty}$ page coincides with the associated graded of the Adams filtration on $[\bA,\Sigma^{s+t} N]$, and \eqref{eq:E2st} is known as the homotopy fixed point spectral sequence.
It would be desirable to know more general conditions under which this is true, it appears this cannot be taken for granted \cite[\S 8.2.1.24]{HA}. 

\section{Representations on $\KU$-modules}
In this section, we specialize to the case where the good coefficient algebra is the $p$-completed $K$-theory spectrum, or a cyclotomic extension of it.  We denote the complex $K$-theory spectrum by $\KU$, its $p$-completion by $\KU_p$, and the cyclotomic extensions of $\KU_p$ by $\KU_q$, where $q$ is a power of $p$.

\subsection{The theorems of Atiyah and Kuhn}
\label{sec:AtiyahKuhn}

If $X$ is a $G$-space, let $K_G(X)$ denote the Grothendieck ring of $G$-equivariant complex vector bundles on $X$.  Let $I_G^{\mathrm{Aug}}(X) \subset K_G(X)$ denote the augmentation ideal --- the virtual vector bundles of dimension zero.  The Atiyah-Segal completion theorem \cite{AtiyahSegal} states that the natural map $K_G(X) \to \KU^0(X_{hG})$ induces an isomorphism
\begin{equation}
\label{eq:AtiyahSegal}
\left(\text{$I_G^{\mathrm{Aug}}$-adic completion of }K_G(X)\right) \stackrel{\sim}{\to} \KU^0(X_{hG})
\end{equation}
For simplicity let us suppose that $X$ is a finite set, so that $X_{hG}$ is a disjoint union of classifying spaces of subgroups of $G$.  Then after completing at $p$, the identification 
\eqref{eq:AtiyahSegal} becomes even more concrete: we have 
\begin{equation}
\label{eq:Kuhn}
\KU_q^0(X_{hG}) \cong K_G(X) \otimes \bZ_q/I_G^{\mathrm{Kuhn}}(X) \cong K_G(X)/I_G^{\mathrm{Kuhn}}(X) \otimes \bZ_q
\end{equation}
and $\KU_q^1(X_{hG})$ vanishes.  Here $I_G^{\mathrm{Kuhn}}$ denotes the ideal of virtual $G$-equvariant vector bundles that vanish under the restriction map $K_G(X) \to K_P(X)$ for any $p$-subgroup of $G$.

\begin{lemma}
\label{lem:thomas-hart-benton}
For any $M \in \LMod(\KU_q[G])^{\mathit{ft}}$, the group $[\bA,M]$ is finitely generated over $\bZ_q$.
\end{lemma}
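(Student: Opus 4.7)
The plan is to compute $[\bA, M] = \pi_0 M^{hG}$ via the Adams (homotopy fixed-point) spectral sequence of \S\ref{subsec:adamsfilt}, after first reducing to the case where $G$ is a $p$-group. For the reduction, let $H$ be a $p$-Sylow subgroup of $G$; since $[G : H]$ is a unit in $\bZ_q$, the standard averaging idempotent exhibits the trivial module $\bA$ as a direct summand of $\Ind_H^G \Res_H^G \bA$ in $\LMod(\bA[G])$. Applying $[-, M]$ and using Frobenius reciprocity then presents $[\bA, M]_G$ as a direct summand of $[\bA, \Res_H^G M]_H$; since $\Res_H^G M$ is again of finite type, it suffices to handle the case where $G$ itself is a $p$-group.

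Assuming $G$ is a $p$-group, choose a projective resolution $\bZ_q \leftarrow P_\bullet$ of the trivial $\bZ_q[G]$-module with each $P_n$ finitely generated free, and lift it to $\LMod(\bA[G])$ via the equivalence of \S\ref{subsec:proj}. The Adams spectral sequence then has
\[
E_2^{s,t} = \Ext^s_{\bZ_q[G]}(\bZ_q, \pi_{-t}M) = H^s(G; \pi_{-t}M),
\]
and each $E_2$-term is finitely generated over $\bZ_q$ since $\pi_* M$ is. The associated graded of $[\bA, M]$ under the Adams filtration is read off along the diagonal $s + t = 0$.

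The main obstacle is that this diagonal extends over infinitely many $s$, so one needs a horizontal vanishing line on the $E_\infty$-page to conclude finite generation. For $M = \bA$ such a vanishing line is precisely Kuhn's computation (eq.~\ref{eq:Kuhn}): $\KU_q^*(BG)$ is a finitely generated free $\bZ_q$-module with $\KU_q^1(BG) = 0$, so the Adams filtration on $[\bA, \bA]$ terminates. The same holds for any permutation module $\bA[G/K]$, since $[\bA, \bA[G/K]]_G \cong \KU_q^*(BK)$. To extend to arbitrary finite-type $M$, I would filter $\pi_* M$ by $\bZ_q[G]$-submodules whose successive subquotients are either finite $p$-torsion groups or $\bZ_q$-free lattices in simple $\bQ_q[G]$-representations, lift this to a cofiber filtration of $M$, and propagate finite generation along the long exact sequence in $[\bA, -]$ using that $\bZ_q$ is Noetherian. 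The subtle point is the lift: not every $\bZ_q[G]$-module is evenly realizable (\S\ref{intro:q1}), so a cellular/Postnikov-based construction is needed; alternatively one can invoke the Greenlees--Sadofsky vanishing of the Tate construction \cite{GreenleesSadofsky} to supply a uniform horizontal vanishing line on $E_\infty$ directly, bypassing the lifting problem.
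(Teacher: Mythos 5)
Your reduction to a $p$-group and your identification of the central difficulty --- that the diagonal $s+t=0$ of the homotopy fixed point spectral sequence meets infinitely many filtration degrees --- both match the paper. But each of the three ways you propose to get past that difficulty has a problem. First, the claim that Kuhn's computation \eqref{eq:Kuhn} makes ``the Adams filtration on $[\bA,\bA]$ terminate'' is false: by the Example of \S\ref{subsec:adamsfilt}, that filtration is the Atiyah--Hirzebruch filtration on $\KU_q^0(BG)$, essentially the $I$-adic filtration on the completed representation ring, and it is strictly decreasing forever for any nontrivial $p$-group --- already for $G=\bZ/p$ the (degenerate) $E_\infty$-page has a $\bZ/p$ in every positive even filtration degree. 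Finite generation of the abutment does not yield a horizontal vanishing line. Second, your filtration-and-realization argument founders exactly where you say it does: the lifting problem is left unresolved. Third, extracting a uniform horizontal vanishing line on $E_\infty$ from the Greenlees--Sadofsky theorem is itself a nontrivial descent/nilpotence statement, not an immediate consequence of $L_{K(1)}M_{hG}\simeq M^{hG}$; as written it is an assertion rather than a proof.

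The missing idea is that no vanishing line is needed: it suffices to show the $E_\infty$-page is finitely generated \emph{as a module over a Noetherian graded ring} arising from a complete filtered ring. The paper reduces further than you do, to $G$ cyclic of order $p$ (via $M^{hP}=(M^{hP'})^{h(P/P')}$ for $P'\trianglelefteq P$ and a composition series of the $p$-group), where Atiyah's theorem says the Atiyah--Hirzebruch spectral sequence for $BG$ degenerates at $E_2$. The HFPSS for $M$ is a module over this degenerate spectral sequence; its $E_2$-page $H^*(G;\pi_*M)$ is finitely generated over $H^*(G;\bZ_q)=\mathrm{gr}(R(G)\otimes\bZ_q)$ by Evens' theorem, hence so is the $E_\infty$-page, i.e.\ the associated graded of the Adams filtration on $[\bA,M]$. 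The Bourbaki criterion for filtered modules over a complete filtered ring then gives finite generation of $[\bA,M]$ over $R(G)\otimes\bZ_q$, which is finite over $\bZ_q$. This module-finiteness argument is the piece you would need to install in place of the nonexistent vanishing line.
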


\begin{proof}
We have $[\bA,M] \cong \pi_0 M^{hG}$.  If $P \subset G$ is a Sylow $p$-subgroup, then $M^{hG}$ is a summand of $M^{hP}$, and if $P' \subset P$ is a normal subgroup then $M^{hP} = (M^{hP'})^{h(P/P')}$.  These remarks reduce us to the case when $G$ is cyclic of order $p$.

Let us prove the lemma by showing that $[\bA,M]$ is finitely generated as a module over the finite $\bZ_q$-algebra $[\bA,\bA] \cong R(G) \otimes \bZ_q$.  Since $\pi_* M$ is finitely generated over $\bZ_q$, $H^*(G,\pi_* M)$ is finitely generated over $H^*(G,\bZ_q)$ \cite[Prop. 4.1]{Evens}.  This is the $E_2$ page of the homotopy fixed points spectral sequence for $M$, which is a module over the Atiyah-Hirzebruch spectral sequence for $G$.  For a cyclic group, the latter spectral sequence degenerates on the second page \cite[Prop 8.1]{Atiyah}, so the $E_{\infty}$ page of the homotopy fixed points spectral sequence for $M$, i.e. the associated graded of the Adams filtration on $[\bA,M]$, is also a module over $H^*(G,\bZ_q) = \mathrm{gr}(R(G) \otimes \bZ_q)$.  The finite generation now follows from \cite[III.2.9 Cor. 1]{Bourbaki}.
\end{proof}

\subsection{Transformation and permutation modules}
With $X$ a $G$-space, we construct a transformation module $\KU_q[X] \in \LMod(\KU_q[G])$ as in \S\ref{subsubsec:tandp}.  The Borel-equivariant $\KU_q$-theory of $X$ computed by \eqref{eq:AtiyahSegal} is naturally encoded as a hom set in the homotopy category of $\LMod(\KU_q[G])$
\begin{equation}
\label{eq:gencoh}
[ \KU_q[X],\Sigma^i \KU_q] \cong \KU_q^i(X_{hG})
\end{equation}
Direct sums and tensor products of transformation modules are also transformation modules.
\begin{equation}
\label{eq:ringhom}
\KU_q[X \times Y] \cong \KU_q[X] \otimes \KU_q[Y] \qquad \KU_q[X \amalg Y] \cong \KU_q[X] \oplus \KU_q[Y]
\end{equation}

Now suppose $X$ is a finite set, i.e. $\KU_q[X]$ is a permutation module.  Then we have a Kronecker pairing $\delta:\KU_q[X] \otimes \KU_q[X] \to \KU_q$.  As $X \times X$ is the disjoint union of the diagonal copy of $X$ and its complement, \eqref{eq:ringhom} gives $\KU_q[X]$ as a canonical summand of $\KU_q[X] \otimes \KU_q[X]$.  The map $\delta$ is the composite with the map to $\KU_q$ induced by $X \to \mathit{pt}$.

For any $M,N \in \LMod(\KU_q[G])$, the Kronecker $\delta$ induces a homotopy equivalence
\begin{equation}
\label{eq:zorro}
\Maps(M,N \otimes \KU_q[X]) \stackrel{\sim}{\to} \Maps(M \otimes \KU_q[X],N)
\end{equation}
given by first applying $\otimes \KU_q[X]$, and then composing with $N \otimes \KU_q[X] \otimes \KU_q[X] \stackrel{1 \otimes \delta}{\longrightarrow} N$.  In other words, it exhibits $\KU_q[X]$ as \emph{self-dual} in the sense of monoidal categories.
Applying \eqref{eq:zorro} with $M = \KU_q[Y]$ and $N = \KU_q$ gives an identification 
\begin{equation}
\label{eq:for19}
[\KU_q[X],\KU_q[Y]] \cong \KU_q^0((Y \times X)_{hG}).
\end{equation}
In particular (after \eqref{eq:Kuhn}) there is a surjective map $K_G(Y \times X) \otimes \bZ_q \to [\KU_q[X],\KU_q[Y]]$.  The kernel consists of those virtual vector bundles that are Kuhn trivial in the sense of \S\ref{intro:Kuhn}.  The composition law in the category of $\KU_q[G]$-modules is compatible with the convolution of virtual vector bundles, (which we review in \S\ref{subsec:42}, see \eqref{eq:conv}).

\begin{prop*}
If $Z$ is a third finite $G$-set, the composition and convolution maps
commute with the identifications of \eqref{eq:zorro}.  That is, the diagram
\[
\xymatrix{
(K_G(X \times Y) \otimes \bZ_q) \times (K_G(Y \times Z) \otimes \bZ_q) \ar[r] \ar[d] & K_G(X \times Z) \otimes \bZ_q \ar[d] \\
[\KU_q[Y],\KU_q[X]] \times [\KU_q[Z],\KU_q[Y]] \ar[r] & [\KU_q[Z],\KU_q[X]]
}
\]
commutes.
\end{prop*}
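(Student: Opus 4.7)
The plan is to reduce the proposition to the formal yoga of self-dual objects in a symmetric monoidal $\infty$-category, together with the functoriality of $\KU_q[-]$ and of \eqref{eq:Kuhn}. By \eqref{eq:zorro} the functor $(-)\otimes\KU_q[X]$ on $\LMod(\KU_q[G])$ is self-adjoint, so $\KU_q[X]$ is self-dual with counit the Kronecker pairing $\delta_X$, and \eqref{eq:for19} is the adjunction isomorphism
\[
[\KU_q[Y],\KU_q[X]] \;\cong\; [\KU_q,\,\KU_q[X]\otimes \KU_q[Y]] \;=\; [\KU_q,\,\KU_q[X\times Y]].
\]
Writing $\tilde f\in[\KU_q,\KU_q[X\times Y]]$ and $\tilde g\in[\KU_q,\KU_q[Y\times Z]]$ for the classes corresponding to $f:\KU_q[Y]\to\KU_q[X]$ and $g:\KU_q[Z]\to\KU_q[Y]$, the zig-zag identity for the self-duality of $\KU_q[Y]$ yields
\[
\widetilde{f\circ g} \;=\; (\mathrm{id}\otimes\delta_Y\otimes\mathrm{id})\circ(\tilde f\otimes\tilde g)\;\in\;[\KU_q,\KU_q[X\times Z]].
\]

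Next I would unpack the arrow $\mathrm{id}\otimes\delta_Y\otimes\mathrm{id}$ geometrically. Under \eqref{eq:ringhom}, its source is $\KU_q[X\times Y\times Y\times Z]$, and by the definition of $\delta_Y$ recalled just after \eqref{eq:ringhom}, the arrow factors as the composite of (i) the projection onto the direct summand $\KU_q[X\times \Delta_Y\times Z]\cong \KU_q[X\times Y\times Z]$ corresponding to the diagonal of the middle pair of $Y$'s, followed by (ii) the map $\KU_q[X\times Y\times Z]\to \KU_q[X\times Z]$ induced by the $G$-equivariant projection of finite $G$-sets $\pi_{XZ}:X\times Y\times Z\to X\times Z$.

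The last step is to translate (i) and (ii) under the identification $[\KU_q,\KU_q[W]]\cong K_G(W)\otimes\bZ_q/I_G^{\mathrm{Kuhn}}$ of \eqref{eq:Kuhn}, observing that under \eqref{eq:ringhom} the class of $\tilde f\otimes\tilde g$ corresponds to the external product $\alpha\boxtimes\beta\in K_G(X\times Y\times Y\times Z)\otimes\bZ_q$. Operation (i) becomes restriction along the open-and-closed $G$-embedding $X\times\Delta_Y\times Z\hookrightarrow X\times Y\times Y\times Z$, which sends $\alpha\boxtimes\beta$ to $\pi_{XY}^*\alpha\cdot\pi_{YZ}^*\beta\in K_G(X\times Y\times Z)\otimes\bZ_q$; operation (ii) becomes pushforward (transfer) along $\pi_{XZ}$, which on finite $G$-sets is fiberwise summation over $Y$. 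The composite of the two operations is the convolution product $\alpha\star\beta$ of \eqref{eq:conv}, proving the proposition. The main obstacle is the routine verification that $K_G$-theoretic restriction and transfer along maps of finite $G$-sets correspond, under \eqref{eq:Kuhn}, to the operations on $\KU_q^0((-)_{hG})$ induced by the functoriality of $\KU_q[-]=\KU_q\wedge\Sigma^\infty(-)_+$; this is a naturality property of the Atiyah--Segal isomorphism \eqref{eq:AtiyahSegal} applied to the relevant $G$-maps, together with the fact that both restriction and transfer preserve the Kuhn ideal.
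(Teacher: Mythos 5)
Your proposal is correct, and it is exactly the ``standard verification'' that the paper declines to write out (the paper's proof consists of the single sentence citing \cite[Rem.~4.11]{BZFN} for a more sophisticated version). The kernel-composition formula $\widetilde{f\circ g}=(\mathrm{id}\otimes\delta_Y\otimes\mathrm{id})\circ(\tilde f\otimes\tilde g)$ via the zig-zag identities, the geometric unpacking of $\delta_Y$ as ``restrict to the diagonal summand, then push to a point,'' and the matching of these two steps with the restriction-then-fiberwise-sum description of the convolution \eqref{eq:conv} are precisely the intended argument. The only bookkeeping difference is that you route the identification through $[\KU_q,\KU_q[X\times Y]]$ (homological) rather than the paper's $[\KU_q[Y\times X],\KU_q]=\KU_q^0((Y\times X)_{hG})$ (cohomological); reconciling the two uses the self-duality of $\KU_q[X\times Y]$ once more, which is of the same ``routine'' nature as the restriction/transfer compatibility you already flag at the end, so nothing is missing.
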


The verification is standard, e.g. \cite[Rem. 4.11]{BZFN} for a much more sophisticated version.

\subsection{One-dimensional modules}
\label{subsec:one-dimensional}
If $\bA$ is a good coefficient algebra, let us say an object $M \in \LMod(\bA[G])$ is one-dimensional, or invertible, if $\pi_0(M) = \pi_0(\bA)$ and $\pi_1(M) = 0$, or equivalently if the underlying $\bA$-module is isomorphic to $\bA$ itself.  The units in $\bA$ are a loop space (in fact an infinite loop space) called $\GL_1(\bA)$, and a $1$-dimensional module is given by a map $\rho:BG \to B\GL_1(\bA)$.

\begin{example*}
When $\bA = \KU$, one may construct $1$-dimensional modules from actions of $G$ on the additive category of finite-dimensional complex vector spaces --- let us denote this category by $\mathrm{Vect}$.  The action of $G$ is automatically continuous for the natural topology on the hom spaces in $\mathrm{Vect}$.  The construction of $\KU$ as the group completion (followed by inverting the Bott element)
\[
\Vect^{\simeq} \mapsto \bZ \times B\mathrm{U} \mapsto \KU
\]
is functorial, so the action on $\mathrm{Vect}$ induces an action on $\KU$, i.e. a one-dimensional module.

There are no covariant self-equivalences of $\mathrm{Vect}$ besides the identity functor, but this functor has a $\mathbf{C}^*$ of automorphisms --- the automorphism group (or ``2-group'') of $\mathrm{Vect}$ is a delooping $B\bC^*$ of $\bC^*$, and its classifying space is a second delooping $B\! B \bC^*$.  An action of $G$ on $\mathrm{Vect}$ is therefore given by a basepoint-preserving map $BG \to B\! B\bC^*$ --- in particular if $G$ is finite such actions are classified by $H^2(BG;\mathbf{C}^*)$. 
\end{example*}

For given $\mathbf{A}$, there is some hope of computing the complete set of $\rho$:  since $\GL_1(\bA)$ is an infinite loop space, it represents an extraordinary cohomology theory denoted $H^*(-;\mathrm{gl}_1(\bA))$.  The set of isomorphism classes of invertible modules is $H^1(BG;\mathrm{gl}_1(\bA))$.  The modules of the Example arise from a splitting of the exact triangle of spectra
\[
\tau_{\geq 4} \gl_1(\KU) \to \gl_1(\KU) \to \tau_{\leq 2} \gl_1(\KU); \qquad \gl_1(\KU) \leftarrow \tau_{\leq 2} \gl_1(\KU)
\]
When $\KU$ is replaced by $\KU_p$ the splitting on the right no longer exists, but the spectrum $\gl_1(\bA)$ can be analyzed using the logarithm of \cite{Rezk}.  In particular one may prove the following:

\begin{prop*}
Suppose $G$ is a cyclic group of $p$-power order, or else that $G$ is elementary abelian and $p \geq 5$.
Then the natural map
\begin{equation}
\label{eq:truncationmap}
H^1(BG;\gl_1(\KU_p)) \to H^1(\tau_{\leq 2} \gl_1(\KU_p))
\end{equation}
is an injection.
\end{prop*}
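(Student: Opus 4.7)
The plan is to reduce the statement to a calculation with $\KU_p^*(BG)$ via Rezk's logarithm, using Atiyah's vanishing $\KU_p^1(BG)=0$ as the crucial input. The Postnikov fiber sequence
\[
\tau_{\geq 3}\gl_1(\KU_p)\to \gl_1(\KU_p)\to \tau_{\leq 2}\gl_1(\KU_p)
\]
(using $\pi_3\gl_1(\KU_p)=0$) produces a long exact sequence whose relevant segment is
\[
H^0(BG;\tau_{\leq 2}\gl_1(\KU_p))\to H^1(BG;\tau_{\geq 3}\gl_1(\KU_p))\to H^1(BG;\gl_1(\KU_p))\to H^1(BG;\tau_{\leq 2}\gl_1(\KU_p)).
\]
Injectivity of the truncation map thus amounts to the vanishing of the image of $H^1(BG;\tau_{\geq 3}\gl_1(\KU_p))$, and I would prove the stronger assertion $H^1(BG;\tau_{\geq 3}\gl_1(\KU_p))=0$.

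The key input is Rezk's logarithm \cite{Rezk}, an infinite loop map $\ell\colon \gl_1(\KU_p)\to L_{K(1)}\KU_p=\KU_p$. Using the formula $\ell(x)=\tfrac{1}{p}\log(x^p/\psi^p(x))$ together with the fact that $\psi^p$ acts on $\pi_{2k}\KU_p\cong\bZ_p$ as multiplication by $p^k$, a direct computation with $x=1+y$ and $y\in\pi_{2k}$ shows that $\pi_{2k}(\ell)$ is multiplication by $1-p^{k-1}$ for $k\geq 1$. For $k\geq 2$ this factor is a $p$-adic unit, so $\ell$ is an isomorphism on $\pi_n$ for every $n\geq 4$; combined with $\pi_3=0$ on both sides, this yields an equivalence
\[
\tau_{\geq 3}\gl_1(\KU_p)\xrightarrow{\;\sim\;}\tau_{\geq 3}\KU_p.
\]
It is therefore enough to show $H^1(BG;\tau_{\geq 3}\KU_p)=0$.

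For this I would use the fiber sequence $\tau_{\geq 3}\KU_p\to\KU_p\to\tau_{\leq 2}\KU_p$ and Atiyah's vanishing $H^1(BG;\KU_p)=0$: the long exact sequence identifies $H^1(BG;\tau_{\geq 3}\KU_p)$ with the cokernel of $H^0(BG;\KU_p)\to H^0(BG;\tau_{\leq 2}\KU_p)$. The Postnikov $k$-invariant of $\tau_{\leq 2}\KU_p$ sits in $\pi_{-3}H\bZ_p=0$, so $\tau_{\leq 2}\KU_p\simeq H\bZ_p\oplus \Sigma^2 H\bZ_p$ and the target is $\bZ_p\oplus H^2(BG;\bZ_p)$, with the map being $(\mathrm{rank},c_1)$. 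The trivial representation hits the $\bZ_p$-factor, and for $G$ a finite abelian $p$-group the one-dimensional characters give line bundles whose first Chern classes furnish an isomorphism from $G^\sharp$ onto $H^2(BG;\bZ_p)$ (equal to $\bZ/p^n$ for $G=\bZ/p^n$ and to $(\bZ/p)^n$ for $G=(\bZ/p)^n$). Hence the map is surjective, the cokernel vanishes, and the result follows.

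The main obstacle is establishing the precise form of the Rezk logarithm and its effect on homotopy groups; the restriction $p\geq 5$ in the elementary abelian case reflects the small-prime subtleties of the $p$-adic logarithm, in particular the $2$-torsion in $\bZ_2^\times$ and the restricted domain of convergence of $\log$ on $1+p\bZ_p$ for $p\leq 3$. For cyclic $p$-power groups one can in fact bypass Rezk's theorem entirely: since $H^{\mathrm{odd}}(B\bZ/p^n;\bZ_p)=0$, the Atiyah--Hirzebruch spectral sequence for $H^1(BG;\tau_{\geq 3}\gl_1(\KU_p))$ already has vanishing $E_2$-page in the range that matters.
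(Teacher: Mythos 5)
Your first half---the fiber sequence $\tau_{\geq 3}\gl_1(\KU_p)\to\gl_1(\KU_p)\to\tau_{\leq 2}\gl_1(\KU_p)$, the reduction to the vanishing of $H^1(BG;\tau_{\geq 3}\gl_1(\KU_p))$, and the use of Rezk's logarithm (your computation $\pi_{2k}(\ell)=1-p^{k-1}$, a unit for $k\geq 2$) to replace $\tau_{\geq 3}\gl_1(\KU_p)$ by $\tau_{\geq 4}\KU_p\simeq\Sigma^4\tau_{\geq 0}\KU_p$---is exactly the paper's argument. The second half diverges: the paper identifies the group to be killed with $H^5(BG;\tau_{\geq 0}\KU_p)=ku_p^5(BG)$ and quotes Ossa's theorem that the kernel of $ku^*(B(\bZ/p)^n)\to\KU^*(B(\bZ/p)^n)$ is concentrated in degrees $\geq 2p-3$, which together with Atiyah's $\KU_p^1(BG)=0$ forces the vanishing when $2p-3>5$, i.e.\ $p\geq 5$. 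You instead work at the other end of the long exact sequence and prove surjectivity of $\KU_p^0(BG)\to(\tau_{\leq 2}\KU_p)^0(BG)$ via the rank and the first Chern classes of characters. This is a legitimate and more self-contained route (it avoids Ossa entirely), and, notably, it never uses $p\geq 5$, only that $p$ is odd and $G$ is abelian---so if it is airtight it actually strengthens the proposition. That should have made you suspicious of your own explanation of the hypothesis: $p\geq 5$ has nothing to do with convergence of the $p$-adic logarithm (Rezk's formula and your unit computation are valid at every prime, including $2$); in the paper it is purely the inequality $5<2p-3$ needed to invoke Ossa.

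Two points in your second half need repair. First, $\tau_{\leq 2}\KU_p$ is \emph{not} $H\bZ_p\oplus\Sigma^2H\bZ_p$: the truncation of the periodic spectrum has $\pi_{-2k}=\bZ_p$ for all $k\geq 0$. What saves you is that these negative homotopy groups contribute to $H^s(BG;-)$ only for $s<0$, so $H^0(BG;\tau_{\leq 2}\KU_p)\cong H^0(BG;\tau_{[0,2]}\KU_p)$; you should say this. Second, the $k$-invariant of the two-stage piece $\tau_{[0,2]}\KU_p$ lives in $[H\bZ_p,\Sigma^3H\bZ_p]=H^3(H\bZ_p;\bZ_p)$ (the group of degree-$3$ stable operations), not in ``$\pi_{-3}H\bZ_p$''; integrally this group is $\bZ/2$ (generated by the integral lift of $Sq^3$), so the splitting you use holds only for odd $p$, and genuinely fails at $p=2$, where $ku$ has $k$-invariant $\beta Sq^2$. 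Since the only $p=2$ case in the proposition is cyclic $2$-groups, your closing remark (the Atiyah--Hirzebruch spectral sequence for $H^1(BG;\tau_{\geq 4}\KU_p)=ku_2^5(BG)$ has vanishing $E_2$ in total degree $5$ because $H^{\mathrm{odd}}(B\bZ/2^n;\bZ_2)=0$) covers it; make that the argument of record for cyclic groups. With those repairs the surjectivity argument (trivial bundle hits $E_\infty^{0,0}$; $[L]-1$ has leading term $c_1(L)$ in $E_\infty^{2,-2}$, and $c_1$ identifies $G^\sharp$ with $H^2(BG;\bZ_p)$ for $G$ an abelian $p$-group) does go through.
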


\begin{proof}
Here $\tau_{\leq 2}$ denotes the cone on the map from the 2-connective cover of $\gl_1(\KU_p)$ --- as the 2-connective cover is also the 3-connective cover, we denote it by $\tau_{\geq 4} \gl_1(\KU_p)$.

The exact triangle of spectra induces a long exact sequence on extraordinary cohomologies, and to show that \eqref{eq:truncationmap} is an isomorphism it suffices to show that
$H^1(BG;\tau_{\geq 4} \gl_1(\KU_p))$ vanishes.

As $\KU_p$ is a $K(1)$-local spectrum, one has a Rezk logarithm map $\gl_1(\KU_p) \to \KU_p$.  From the formula of \cite[Theorem 1.9]{Rezk}, one sees that this map is an isomorphism on homotopy groups in degrees $4$ and greater --- in particular it induces an isomorphism $\tau_{\geq 4} \gl_1(\KU_p) \to \tau_{\geq 4} \KU_p$.  Note the Bott map further identifies this with $\Omega^{-4} \tau_{\geq 0} \KU_p$.  Thus we are reduced to showing that $H^1(BG;\tau_{\geq 4} \KU_p) = H^5(BG;\tau_{\geq 0} \KU_p)$ vanishes.  According to \cite[p. 2]{Ossa}, the kernel of the map from connective $K$-theory to periodic $K$-theory is concentrated in degrees $\geq 2p-3$.
\end{proof}

The same technique gives a long exact sequence relating the low-degree connective $K$-theory of $BG$ to the one-dimensional modules of $\KU_p[G]$ that do not arise from $G$-actions on $\Vect$.

\subsection{Functors from finite type bimodules}
\label{sec:Tate-van}

Let $B \in \LMod(\KU_q[G_1 \times G_2^{\op}])^{\mathit{ft}}$ be a $(G_1,G_2)$-bimodule of finite type.  Then the composite functor
\[
\LMod(\KU_q[G_2]) \xrightarrow{\otimes_{\KU_q[G_2]} B} \LMod(\KU_q[G_1]) \xrightarrow{L_{\hat{p}}} \LMod(\KU_q[G_1])
\]
carries finite type $\KU_q[G_2]$-modules to finite type $\KU_q[G_1]$-modules.

To see this requires the main result of \cite{GreenleesSadofsky}.  Note that we can replace $L_{\hat{p}}$ with $L_{K(1)}$ --- as $\KU_q$ is itself a $K(1)$-local spectrum, any $K(1)$-local $\KU_q$-module spectrum is $p$-complete and vice versa.
If $M$ is a $\KU_q[G_2]$-module, we may identify $B \otimes_{\KU_q[G_2]} M$ with  $(B \otimes_{\KU_q} M)_{hG_2}$.  If $B$ and $M$ are of finite type, then so are $B \otimes_{\KU_q} M$ and $(B \otimes_{\KU_q} M)^{hG_2}$ (the latter after Lemma \ref{lem:thomas-hart-benton}).  Now the fact that $L_{\hat{p}} (B \otimes_{\KU_q[G_2]}M)$ has finite type follows from the formulation of the Greenlees-Sadofsky theorem given in \cite[Th. 0.0.1]{HopkinsLurie}: there is a natural identification
\begin{equation}
\label{eq:GrSa}
L_{K(1)}N_{hG} \stackrel{\sim}{\to} N^{hG}
\end{equation}
for any $K(1)$-local spectrum $N$.

\subsection{Commutative $p$-groups and Koszul transform}
\label{subsec:Koszul}
Let $G$ be a commutative $p$-group, and let $G^{\sharp} := \Hom(G,\bC^*)$ be its Pontrjagin dual.  If $G$ has exponent $p^e$, then so does $G^\sharp$.  Let $E$ denote the $\bC^*$-valued $2$-cocycle on $G \times G^{\sharp}$ given by the formula\footnote{There are other natural cocycles of the form $\phi(h)^a \psi(g)^b$, for $a,b \in \bZ/p^e$, but those of the form $\phi(h)^a \psi(g)^a$ are coboundaries.  They make a cyclic group of order $p^e$ inside $H^2(B(G \times G^{\sharp});\bC^*)$, the class of $E$ given is a generator.}
\begin{equation}
\label{eq:Ecocycle}
E(g,\phi,h,\psi) = \phi(h)
\end{equation}
This determines a class in $H^2(B(G \times G^\sharp);\bC^*)$ of exact order $p^e$, as one can see by considering its restriction to a subgroup generated by $(g,1)$ and $(1,\phi)$ where $\phi(g)$ is a primitive $p^e$th root of unity.

Let $M_E$ denote the one-dimensional $\KU_q[G \times G^{\sharp}]$-module associated to $E$, in the manner of the Example of \S\ref{subsec:one-dimensional}.  It is a $(G,G^\sharp)$-bimodule, (as $G$ is commutative $G^\sharp = (G^\sharp)^{\op}$) and determines a functor
$
\LMod(\KU_q[G^\sharp]) \to \LMod(\KU_q[G])
$ by tensoring with $M_E$.  After \S\ref{sec:Tate-van}, the formula $N \mapsto L_{\hat{p}}(M_E \otimes_{\KU_q[G^{\sharp}]} N)$ determines a functor 
\begin{equation}
\label{eq:i-prefer-Fourier}
\LMod(\KU_q[G^\sharp])^{\mathit{ft}} \to \LMod(\KU_q[G])^{\mathit{ft}}
\end{equation}

Let us prove the Theorem of 
\ref{intro:newsymmetries}:

\begin{theorem*}
The functor \eqref{eq:i-prefer-Fourier} is an equivalence.
\end{theorem*}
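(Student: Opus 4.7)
The plan is to exhibit an explicit quasi-inverse. Let $M_E^\vee$ be the $(G^\sharp, G)$-bimodule attached to the inverse cocycle $-E \in H^2(B(G^\sharp \times G); \bC^*)$, and define
\[
\Phi: \LMod(\KU_q[G])^{\mathit{ft}} \to \LMod(\KU_q[G^\sharp])^{\mathit{ft}}, \qquad \Phi(M) = L_{\hat{p}}(M_E^\vee \otimes_{\KU_q[G]} M);
\]
by \S\ref{sec:Tate-van}, $\Phi$ preserves finite type. Both $\Psi$ and $\Phi$ are continuous $\KU_q$-linear functors represented by bimodules, so by the formalism of \S\ref{subsubsec:bimod} it suffices to prove that the composite bimodules $L_{\hat{p}}(M_E^\vee \otimes_{\KU_q[G]} M_E)$ and $L_{\hat{p}}(M_E \otimes_{\KU_q[G^\sharp]} M_E^\vee)$ are equivalent to the diagonal bimodules $\KU_q[G^\sharp]$ and $\KU_q[G]$ respectively. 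The symmetry of the construction under $G \leftrightarrow G^\sharp$ reduces this to one of the two equivalences.

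I would then compute $M_E^\vee \otimes_{\KU_q[G]} M_E \simeq (M_E^\vee \otimes_{\KU_q} M_E)_{hG}$ with $G$ acting diagonally. The diagonal $G$-action is untwisted (the two cocycles on $G$ cancel), and since $E|_{BG} = 0$ by direct inspection of $E(g,1,h,1)=1$, the underlying $\KU_q[G]$-module is trivial. Applying Greenlees--Sadofsky \eqref{eq:GrSa} converts coinvariants to invariants after $p$-completion, giving
\[
L_{\hat{p}}(M_E^\vee \otimes_{\KU_q[G]} M_E) \simeq \KU_q^{hG}
\]
as $\KU_q$-modules, carrying a residual $(G^\sharp, G^\sharp)$-bimodule structure inherited from the outer $G^\sharp$-actions on the two factors. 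By the Atiyah/Kuhn identification of \S\ref{sec:AtiyahKuhn}, combined with the vanishing of the Kuhn ideal for a $p$-group, $\pi_0 \KU_q^{hG} \cong R(G) \otimes \bZ_q \cong \bZ_q[G^\sharp]$ and $\pi_1 \KU_q^{hG} = 0$, matching $\pi_* \KU_q[G^\sharp]$.

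What remains is to identify the $(G^\sharp, G^\sharp)$-bimodule structure on $\KU_q^{hG}$ with the regular one on $\KU_q[G^\sharp]$. On $\pi_0$ this is the assertion that a character $\phi \in G^\sharp$ acts on $R(G) \otimes \bZ_q$ by multiplication by the class $[\phi] \in R(G)$ of the corresponding one-dimensional representation, which is immediate from the explicit form $E(g,\phi,h,\psi) = \phi(h)$ and the canonical ring isomorphism $R(G) \cong \bZ[G^\sharp]$. Proposition \ref{thm:coefficients} promotes the $\pi_*$-level agreement to an equivalence in the homotopy category. The main obstacle lies here: upgrading the $\pi_*$ identification to a coherent equivalence of bimodule spectra requires more than Proposition \ref{thm:coefficients}, which only controls homotopy categories. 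An alternative route that sidesteps this is to check on a compact generator: $\KU_q[G^\sharp]$ generates $\LMod(\KU_q[G^\sharp])^{\mathit{ft}}$, its image $\Psi(\KU_q[G^\sharp]) \simeq M_E \simeq \KU_q$ (trivial, since $M_E|_G$ is classified by $E|_{BG} = 0$) is a compact generator of $\LMod(\KU_q[G])^{\mathit{ft}}$ by vanishing-Tate smoothness, and the induced $E_1$-ring map $\KU_q[G^\sharp] \to \KU_q^{hG}$ realizes the Atiyah isomorphism on $\pi_0$ between two even $\KU_q$-algebras --- so that a Schwede--Shipley reconstruction forces $\Psi$ to be an equivalence.
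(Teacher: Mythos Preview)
Your setup coincides with the paper's: reduce to showing that the composite bimodule is the diagonal, pass from $(-)_{hG}$ to $(-)^{hG}$ via Greenlees--Sadofsky, and identify the underlying object with $\KU_q^{hG}$.  You also correctly compute $\pi_*\KU_q^{hG}$ and diagnose the real difficulty: promoting the $\pi_*$-level match of the $(G^\sharp,G^\sharp)$-action to an equivalence of bimodule spectra.

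That is precisely the step the paper addresses, but by a direct construction rather than by Schwede--Shipley.  Write $M_{E'}$ for the one-dimensional $(G^\sharp\times G\times G^\sharp)$-module coming from the product cocycle.  The paper observes that the restriction of $E'$ along the diagonal $G^\sharp\times G\hookrightarrow G^\sharp\times G\times G^\sharp$ is a coboundary, giving an explicit isomorphism $j:\KU_q\xrightarrow{\sim}\Res\, M_{E'}$.  Its adjoint is a map $\Ind_{G^\sharp\times G}^{G^\sharp\times G\times G^\sharp}\KU_q\to M_{E'}$ whose source is the pullback of the diagonal $(G^\sharp,G^\sharp)$-bimodule and on which $G$ acts trivially; hence it factors through $(M_{E'})^{hG}$.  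Finally, restricting this map along the diagonal $G^\sharp\hookrightarrow G^\sharp\times G^\sharp$ recovers $j^{hG}$, an isomorphism, and restriction is conservative.  This produces the desired bimodule equivalence without ever needing to lift a $\pi_*$-identification.

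Your Schwede--Shipley alternative is plausible but, as written, rests on two claims you have not proved.  First, that the trivial module $\KU_q$ is a compact \emph{generator} of $L_{\hat p}\LMod(\KU_q[G])$: compactness is indeed Tate vanishing, but ``generator'' requires that $M^{hG}=0\Rightarrow M=0$ for $p$-complete $M$, which is a separate (true, but nontrivial) descent statement for $p$-groups that you would need to supply.  Second, that the induced $E_1$-map $\KU_q[G^\sharp]\to\KU_q^{hG}$ realises the Atiyah isomorphism on $\pi_0$: this amounts to checking that the $G$-equivariant automorphism of the trivial module $\KU_q\simeq\Res^{G\times G^\sharp}_G M_E$ given by $\phi\in G^\sharp$ is the class of the line bundle $\phi$ in $\KU_q^0(BG)$, which follows from the commutator of the cocycle $E$ but needs to be said.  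Once both points are established your argument goes through; the paper's adjunction trick simply bypasses them.
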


\begin{proof}
Let us show that the $p$-completed tensor product
\begin{equation}
\label{eq:thistensorproduct}
L_{\hat{p}}(M_{E_{G^\sharp}} \otimes_{\KU_q[G]} M_{E_G})
\end{equation}
is isomorphic to the diagonal $(G^\sharp,G^{\sharp})$-bimodule.  The tensor product $M_{E_G^{\sharp}} \otimes_{\KU_q[G]} M_{E_G}$ is the homotopy quotient by the diagonal $G$-action on $M_{E_{G^{\sharp}}} \otimes_{\KU_q} M_{E_G}$, so after \eqref{eq:GrSa}, it suffices to show that $(M_{E_{G^{\sharp}}} \otimes_{\KU_q} M_{E_G})^{hG}$ is isomorphic to the diagonal bimodule.  

As a $(G^\sharp \times G \times G^{\sharp})$-module, $M_{E_{G^\sharp}} \otimes_{\KU_q} M_{E_G}$ is the rank one module $M_{E'}$ arising from the $\bC^*$-valued cocycle given by
\begin{equation}
\label{eq:EEcocycle}
E':(\phi,g,\psi), (\chi,h,\omega) \mapsto \chi(g) \psi(h)
\end{equation}
When restricted to $G^\sharp \times G$ along the map $(\phi,g) \mapsto (\phi,g,\phi)$, \eqref{eq:EEcocycle} becomes $(\phi,g),(\omega,h) \mapsto \omega(g)\phi(h)$, a coboundary.  Thus there is an isomorphism $j:\KU_q \stackrel{\sim}{\to} \Res^{G^{\sharp} \times G \times G^{\sharp}}_{G^{\sharp} \times G} M_{E'}$, which is adjoint to a morphism
\begin{equation}
\label{eq:now-this}
\Ind_{G^\sharp \times G}^{G^{\sharp} \times G \times G^{\sharp}} \KU_q \to M_{E'}
\end{equation}
Note the domain of this map is isomorphic to the diagonal bimodule for $G^\sharp \times G^\sharp$, pulled back along the projection $G^\sharp \times G \times G^\sharp \to G^\sharp \times G^\sharp$.  In particular $G$ acts trivially on the domain, so by the universal property of homotopy fixed points \eqref{eq:now-this} factors through $M_{E'}^{hG}$.
By construction, the restriction of this map $\iota:\KU_q[G^{\sharp}] \to M_{E'}^{hG}$ along the diagonal $G^\sharp \to G^\sharp \times G^\sharp$ is isomorphic to the image under $(-)^{hG}$ of the map $j$ --- as $j$ is an isomorphism and restriction is conservative functor, the map $\iota$ is an isomorphism.

An identical argument shows that $L_{\hat{p}} M_{E_G} \otimes_{\KU_q[G^\sharp]} M_{E_{G^{\sharp}}}$ is isomorphic to the diagonal bimodule, completing the proof.
\end{proof}

\subsection{$K$-theoretic analog of a theorem of Goresky-Kottwitz-MacPherson}
\label{subsec:GKM}
If $T$ is a compact torus, $C^*(BT)$ denotes the real-valued cochain algebra and $C_*(T)$ denotes the real-valued chain algebra with its Pontrjagin product, there is a Koszul equivalence between a full subcategory of suitably finite objects in the derived category of $C^*(BT)$-modules and a similar full subcategory of  the derived category of $C_*(T)$-modules.  We regard the equivalence of \S\ref{subsec:Koszul} as a $K$-theoretic analog of this equivalence.  To explain why, let us prove the following

\begin{prop*}
Let $G$ be a commutative $p$-group.  Let $X$ be a $G$-space, and suppose that $\KU_q[X]$ has finite type.  Let $\KU_q[X] \in \LMod(\KU_q[G])$ be the corresponding transformation module, and let $M \in \LMod(\KU_q[G^{\sharp}])$ be its image under the equivalence of \S\ref{subsec:Koszul}.  Then the underlying $\KU_q$-module of $M$ is naturally identified with $L_{\hat{p}} \KU_q[X_{hG}]$.
\end{prop*}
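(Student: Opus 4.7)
The plan is to compute $M$ by applying the inverse of the equivalence \eqref{eq:i-prefer-Fourier} to $\KU_q[X]$. From the proof of the theorem of \S\ref{subsec:Koszul} the inverse is $P \mapsto L_{\hat{p}}(M_{E_{G^\sharp}} \otimes_{\KU_q[G]} P)$, where $M_{E_{G^\sharp}}$ is the one-dimensional $(G^\sharp, G)$-bimodule associated to the cocycle obtained from \eqref{eq:Ecocycle} with the roles of $G$ and $G^\sharp$ interchanged, namely $E_{G^\sharp}((\phi_1, g_1), (\phi_2, g_2)) = g_1(\phi_2)$ with $\phi_i \in G^\sharp$ and $g_i \in G = (G^\sharp)^\sharp$. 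Using the base-change formula of \S\ref{subsubsec:bimod},
\[
M \;\cong\; L_{\hat{p}}\bigl( (M_{E_{G^\sharp}} \otimes_{\KU_q} \KU_q[X])_{hG} \bigr),
\]
where $G$ acts diagonally on the expression in parentheses.

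Next I would identify the underlying $\KU_q[G]$-module of $M_{E_{G^\sharp}}$ obtained by restricting along $g \mapsto (1,g) \in G^\sharp \times G$. The restricted cocycle is $E_{G^\sharp}((1, g_1), (1, g_2)) = g_1(1) = 1$, i.e.\ trivial, so $\Res^{G^\sharp \times G}_G M_{E_{G^\sharp}} \cong \KU_q$ with trivial $G$-action. Consequently the underlying $\KU_q[G]$-module of $M_{E_{G^\sharp}} \otimes_{\KU_q} \KU_q[X]$, with its diagonal $G$-action, is isomorphic to $\KU_q[X]$ with its standard $G$-action.

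Finally, since $\KU_q$ carries the trivial $G$-action,
\[
(\KU_q[X])_{hG} \;=\; (\KU_q \wedge \Sigma^\infty X_+)_{hG} \;\cong\; \KU_q \wedge \Sigma^\infty (X_{hG})_+ \;=\; \KU_q[X_{hG}],
\]
and $p$-completing gives $L_{\hat{p}}\KU_q[X_{hG}]$, as required. The main subtlety is making the triviality of $\Res^{G^\sharp \times G}_G M_{E_{G^\sharp}}$ precise at the level of spectra, rather than merely at the level of a cohomology class in $H^2(BG;\bC^*)$: one must exhibit a canonical trivialization $\KU_q \cong \Res^{G^\sharp \times G}_G M_{E_{G^\sharp}}$, in the same spirit as the map $\iota$ built in the proof of the Koszul equivalence. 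Naturality of the resulting identification in $X$ then follows from the functoriality of the bimodule tensor product and of $L_{\hat{p}}$.
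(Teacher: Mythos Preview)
Your argument is correct and follows the same line as the paper's own proof: both identify the image of $\KU_q[X]$ as $L_{\hat p}\bigl((M_{E_{G^\sharp}}\otimes_{\KU_q}\KU_q[X])_{hG}\bigr)$ and then observe that the restriction of the cocycle $E_{G^\sharp}$ to $G$ is trivial, so the underlying $G$-module of $M_{E_{G^\sharp}}$ is $\KU_q$. The only cosmetic difference is that the paper immediately rewrites $L_{\hat p}(-)_{hG}$ as $(-)^{hG}$ via Greenlees--Sadofsky and concludes with $\KU_q[X]^{hG}\cong L_{\hat p}\KU_q[X_{hG}]$, whereas you stay on the orbit side and use $(\KU_q\wedge\Sigma^\infty_+X)_{hG}\cong\KU_q\wedge\Sigma^\infty_+(X_{hG})$ directly; your route is slightly more elementary, and your remark about needing a spectrum-level trivialization (not just a cohomological one) is a point the paper leaves implicit.
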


This is analogous to \cite[Theorem 1.5.1]{GKM}.

\begin{proof}
Let $E \in H^2(G^\sharp \times G;\bC^*)$ be the cocycle given by $E(\phi,g,\psi,h) = \psi(g)$, let $M_E$ denote the corresponding $\KU_p[G^\sharp \times G]$-module.  The underlying $\KU_q$-module spectrum of the image of $\KU_q[X]$ in $\LMod(\KU_q[G^\sharp])$ is naturally identified with
$(M_E \otimes_{\KU_q} \KU_q[X])^{hG}$.  But as a $G$-module, $M_E$ is isomorphic to the trivial module, so this is naturally identified with $\KU_q[X]^{hG} \cong L_{\hat{p}}\KU_q[X_{hG}]$. 
\end{proof}

\subsection{The character ring $\Cent(\KU_q[G])$}
\label{subsec:19}
In \S\ref{intro:19}, we defined the character ring $\Cent(\KU_q[G])$ to be the endomorphism ring of the identity functor on the homotopy category of $\KU_q[G]$-modules.  A standard argument shows that this ring is commutative.  Below we will show that it is reduced and determine the structure of its prime ideal spectrum, but let us first explain the identification with $\KU_q^0(G_{\conj,hG})$, where $G_{\conj,hG}$ denotes the Borel construction of the conjugation action of $G$ on itself. 

By  \S\ref{subsubsec:bimod}, $\Cent(\KU_q[G])$ is naturally identified with the homotopy classes of self-maps of the identity bimodule $\KU_q[G] \in \LMod(\KU_q[G \times G^{\op}])$.  This is a permutation $G \times G^{\op}$-module, so by \eqref{eq:for19}, the group of self-maps is given by $\KU_q^0((G \times G)_{h(G \times G^{\op})})$, where $G \times G^{\op}$ acts on $G \times G$  by $((g,h),(x,y)) \mapsto (gxh,gy h)$.  The diagonal embedding $G_{\conj} \to G \times G$ is equivariant for the homomorphism $G \to G \times G^{\op}$ carrying $g$ to $(g,g^{-1})$, and gives an equivalence of groupoids $G_{\conj}/G \to (G \times G)/(G \times G^{\op})$, so that $\KU_q^0((G \times G)_{h(G \times G^{\op})}) \cong \KU_q^0(G_{\conj,hG})$.

By \eqref{eq:Kuhn}, we can identify $\KU_q^0(G_{\conj,hG})$ with the quotient of $K_G(G) \otimes \bZ_q$ by Kuhn-trivial virtual bundles.

\begin{proof}[Proof of the Theorem of \S\ref{intro:19}]
It follows from \S\ref{sec:AtiyahKuhn} that $\KU_q^0(G_{\conj,hG})$ is free of finite rank over $\bZ_q$.  In fact $\KU_q^0(G_{\conj,hG}) = \bigoplus_g \KU_q(B\Cent_G(g))$, where $g$ runs through conjugacy class representatives, so that the rank $r$ is the number of conjugacy classes of commuting pairs $(g,u)$ where $u$ has $p$-power order.  To prove that $\KU_q^0(G_{\conj,hG})$ is reduced, we fix an embedding $\bZ_q \hookrightarrow \bC$ and construct $r$ distinct $\bZ_q$-algebra homomorphisms $\KU_q^0(G_{\conj,hG}) \to \bC$.  It then follows by the linear independence of characters that $\KU_q^0(G_{\conj,hG})$ is a subring of $\prod_{i = 1}^r \bC$, and in particular is reduced.

Note that $r$ is also equal to the number of isomorphism classes of pairs $(u,L)$ where $u$ has $p$-power order and $L$ is an irreducible representation of $\Cent_G(u)$.  For each such $(u,L)$, if $\cE$ is a conjugation-equivariant virtual vector bundle on $G$ we define the quantity
\begin{equation}
\label{eq:chiuL}
\chi_{(u,L)}(\cE) := \frac{1}{\dim(L)} \sum_{g \in \Cent_G(u)} \trace(u\vert \cE_g) \trace(g \vert L)
\end{equation}
This is a ring homomorphism --- this can be checked directly, or see \cite[\S 2.2]{Lusztig1}. 

Any commutative ring that is free of finite rank over $\bZ_q$ has maximal ideals in bijection with blocks.  Any such commutative ring is moreover of Krull dimension one, and the association $\chi \mapsto \ker(\chi)$ is a surjection from the $\bC$-points to minimal primes, with two $\bC$-points determining the same prime if they are in the same orbit under $\mathrm{Gal}(\bC/\bQ_q)$.  To complete the proof, we observe that (assuming $q$ is chosen sufficiently large) $\gamma \in \mathrm{Gal}(\bC/\bQ_q)$ acts through $\mathrm{Gal}(\bQ_q(\mu_{p^{\infty}})/\bQ_q) \cong \bZ_p^*$, via:
\begin{eqnarray*}
\gamma(\chi_{(u,L)}) & = & \frac{1}{\dim(L)} \sum_{g \in \Cent_G(u)} \gamma(\trace(u\vert \cE_g)) \gamma (\trace(g\vert L)) \\
& = & \frac{1}{\dim(\gamma(L))} \sum_{g \in \Cent_G(u)} \trace(u^{\gamma} \vert \cE_g) \trace(g \vert \gamma(L)) \\
& = & \frac{1}{\dim(\gamma(L))}\sum_{g \in \Cent_G(u^{\gamma})} \trace(u^{\gamma} \vert \cE_g) \trace(g \vert \gamma(L)) \\
& = & \chi_{(u^{\gamma},\gamma(L))}
\end{eqnarray*}
where $u^{\gamma}$ denotes the result of raising $u$ to the power of the image of $\gamma$ in $\bZ_p^*$.
\end{proof}

\subsection{Carlsson's modules}
\label{sec:carlsson}

Give a $\bZ_q[G]$-module $\overline{M}$, we raised the question in \S\ref{intro:q1} of finding an ``even realization'' for it.  That is, the problem is to find $M \in \LMod(\KU_q[G])$ with $\pi_1 M = 0$ and $\pi_0 M = \overline{M}$.  In \S\ref{sec:four} we give an important class of positive examples, but let us here show that it is not always possible.

\begin{prop*}
If $G = (\bZ/2)^{\times 3}$, there are $\bZ_2[G]$-modules that cannot be realized in $K$-theory.
\end{prop*}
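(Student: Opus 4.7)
The plan is to adapt Carlsson's construction \cite{Carlsson}, which for $G = (\bZ/2)^{\times 3}$ produces a specific $\bZ_2[G]$-module $\overline{M}$ not realizable as the reduced homology of any equivariant Moore space. I would take the same $\overline{M}$ and suppose for contradiction that there is an even $M \in \LMod(\KU_2[G])$ with $\pi_0 M = \overline{M}$.

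The $G$-action on $M$ is classified by a pointed map $BG \to B\mathrm{Aut}_{\KU_2}(M)$ whose underlying map on $\pi_1$ recovers the given $\bZ_2[G]$-module structure $\rho \colon BG \to B\mathrm{Aut}_{\bZ_2}(\overline{M})$. Using \thref{thm:coefficients} together with the $2$-periodicity of $\KU_2$, the Postnikov layers of $B\mathrm{Aut}_{\KU_2}(M)$ above $B\mathrm{Aut}_{\bZ_2}(\overline{M})$ have homotopy groups $\pi_n B\mathrm{Aut}_{\KU_2}(M) = \Ext^1_{\bZ_2}(\overline{M},\overline{M})$ for even $n \geq 2$ and $\End_{\bZ_2}(\overline{M})$ for odd $n \geq 3$. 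Consequently, lifting $\rho$ through this tower produces a sequence of obstructions in $H^{n+1}(G;\pi_n B\mathrm{Aut}_{\KU_2}(M))$ for each $n \geq 2$, and for $G = (\bZ/2)^{\times 3}$ these obstruction groups are typically nontrivial.

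The key step is to match the first nontrivial obstruction --- a class in $H^3(G;\Ext^1_{\bZ_2}(\overline{M},\overline{M}))$ coming from the universal $k$-invariant of $B\mathrm{Aut}_{\KU_2}(M)$ at the $(\pi_1,\pi_2)$-stage, which is governed by the Bott class in $\pi_2 \KU_2$ --- with the obstruction that Carlsson extracts from his module. Carlsson's proof produces (essentially) a secondary cohomology invariant on $BG$ of this shape, and identifying it with the $k$-invariant pulled back along $\rho$ is the crux.

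The main obstacle is exactly this identification: Carlsson works with chain-level $G$-CW models and invokes Smith theory, while the $\KU_2$-module obstruction lives abstractly in the Postnikov tower of a mapping space in $\KU_2$-modules. A possible workaround is to bypass the precise matching by approximating $M$ at the spectrum level using the Adams spectral sequence of Section 2, extracting from it an honest $G$-CW complex whose reduced $\bZ_2$-homology realizes $\overline{M}$ in a single degree, and then directly applying Carlsson's theorem to that $G$-CW complex to reach a contradiction.
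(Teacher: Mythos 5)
Your proposal has two genuine gaps, and the second is fatal to the proposed workaround.

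First, you cannot take ``the same $\overline{M}$'' as Carlsson. Carlsson's counterexample to the equivariant Moore space problem at $p=2$ lives over $\bZ/2\times\bZ/2$ and is tuned to an obstruction governed by $\mathrm{Sq}^2$ (the first differential relevant to ordinary homology realizations). The obstruction to realizing a module over $\KU_2$ is governed instead by $\mathrm{Sq}^3$, because the first nonzero differential in the Atiyah--Hirzebruch spectral sequence for $K$-theory is $d_3=\mathrm{Sq}^3$. This is a strictly weaker condition, which is exactly why the statement is about $(\bZ/2)^{\times 3}$ and why one must build a \emph{new} module: the paper takes the extension $\overline{M}(f)$ of $\bZ_2$ by the fourth syzygy of the trivial module classified by $f=x^4+(x+y+z)xyz\in H^4(BG;\bZ_2)$, chosen so that $\mathrm{Sq}^1 f=0$ but $\mathrm{Sq}^3 f\notin(f)$. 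The paper explicitly records that it is unknown whether any $\bZ_2[\bZ/2\times\bZ/2]$-module fails to be realizable over $K$-theory, so importing Carlsson's module verbatim cannot be expected to work.

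Second, your workaround runs the implication backwards. From an even $\KU_2[G]$-module $M$ with $\pi_0M=\overline{M}$ there is no procedure for extracting a $G$-CW complex whose reduced homology is $\overline{M}$; the true implication goes the other way (an equivariant Moore space on a $\bZ_2$-free module would yield an even $\KU_2$-realization by smashing with $\KU_2$). So you cannot ``apply Carlsson's theorem to that $G$-CW complex.'' The step you correctly identify as the crux --- turning the existence of $M$ into a computable cohomological constraint on $\overline{M}$ --- is exactly what your argument is missing, and the Postnikov-tower obstruction in $H^{n+1}(G;\pi_nB\mathrm{Aut}_{\KU_2}(M))$ is never identified. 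The working mechanism is different: run the Adams spectral sequence of \S\ref{subsec:adamsfilt} computing $[M,\KU_2]$, whose $E_2=E_3$ page is $\Ext^*_{\bZ_2[G]}(\overline{M}(f),\bZ_2)$, a module over $\Ext^*(\bZ_2,\bZ_2)=H^*(BG;\bZ_2)$ with annihilator $(f)$ in positive degrees; since the differentials are linear over the Atiyah--Hirzebruch spectral sequence of the trivial module, whose $d_3$ is $\mathrm{Sq}^3$, realizability forces $\mathrm{Sq}^3(f)$ to be divisible by $f$, which the chosen $f$ violates.
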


\begin{proof}
The counterexamples come by following a construction of Carlsson's \cite{Carlsson}.  Fix a free resolution $\overline{F}_{\bullet} \to \bZ_2$ of the trivial $\bZ_2[G]$-module, and let $W_n$ by the $n$th syzygy module of the resolution, i.e. $W_1$ is the kernel of $\overline{F}_0 \to \bZ_2$ and $W_n$ is the kernel of $\overline{F}_{n-1} \to \overline{F}_{n-2}$.  Then $\Ext^1(W_n,\bZ_2) \cong H^n(BG,\bZ_2)$.  For each $f \in H^n(BG,\bZ_2)$, let $\overline{M}(f)$ be the corresponding extension of $\bZ_2$ by $W_n$.

For $n > 0$, since $G$ has exponent $2$, the natural map $H^n(BG,\bZ_2) \to H^n(BG;\bF_2)$ is an injection --- it identifies $H^n(BG,\bZ_2)$ with the kernel of the first Steenrod square $\mathrm{Sq}^1$.  In the rest of the proof we will abuse notation and not distinguish between $f \in H^n(BG,\bZ_2)$ and its image in $H^n(BG,\bF_2)$.  

We claim that, if $\overline{M}(f)$ is realizable over $K$-theory, then $\mathrm{Sq}^3(f)$ must be divisible by $f$.   Indeed, suppose $M \in \LMod(\KU_2[G])$ has $\pi_0 M = \overline{M}(f)$ and $\pi_1 M = 0$.  The Adams spectral sequence \eqref{eq:E2st} for $[M,\KU_2]$ has $E_2^{st} = \Ext^s(\overline{M}(f),\bZ_2)$ for $t$ even and $E_2^{st} = 0$ for $t$ odd --- in particular $E_2= E_3$.  As a module over $H^*(BG,\bZ_2) = \Ext^*(\bZ_2,\bZ_2)$, the annihilator of $E_3^{st}$ is the principal ideal generated by $f$ --- indeed except in degree $0$, $\Ext^s(\overline{M}(f),\bZ_2)$ is given by the cokernel of multiplication-by-$f$ on $H^{s-n}(BG,\bZ_2)$.  The claim now follows from the compatibility of the module structure with the differentials in the Adams spectral sequence, and the fact that in the Atiyah-Hirzebruch spectral sequence for $H^*(BG,\bZ_2)$, the third differential is the Steenrod squaring operation $\mathrm{Sq}^3$ \cite[Prop. 2.4(d)]{Atiyah}.

It remains to find $n$ and $f \in H^n(BG,\bF_2)$ such that $\mathrm{Sq}^1(f) = 0$ and $\mathrm{Sq}^3(f)$ is not divisible by $f$.  Recall $H^*(BG,\bF_2)$ is a polynomial algebra in the generators of $H^1(BG,\bF_2)$ which we denote by $x,y,z$.  A direct computation shows we can take $n = 4$ and $f = x^4 + (x+y+z)xyz$.
\end{proof}

The construction is essentially identical to Carlsson's construction of a module that cannot be realized in a $G$-equivariant Moore space, but there the relevant annihilator must be stable under $\mathrm{Sq}^2$ rather than $\mathrm{Sq}^3$.  This stronger condition allows Carlsson to build a counterexample to the equivariant Moore space problem when $G = \bZ/2 \times \bZ/2$.  I do not know whether there are any $\bZ_2[\bZ/2 \times \bZ/2]$-modules that cannot be realized over $K$-theory.

\section{Study of $p$-permutation modules}
\label{sec:four}

This section is mostly self-contained.  We give a $K$-theoretic analog of $p$-permutation modules.

\subsection{Classical $p$-permutation modules}
Let us first recall the notion of $p$-permutation module \cite{Broue} in a form that makes the analogy easy to see.

Let $A$ be a commutative ring, and consider the category $\cF(G,A)$ whose objects are finite $G$-sets, whose morphisms from $X$ to $Y$ are given by matrices whose entries are chosen in $A$ and that are fixed by the action of $G$, and whose composition law is multiplication of matrices.  Put another way, the homomorphisms from $X$ to $Y$ are given by $G$-invariant functions $X \times Y \to A$ and the multiplication is given by convolution.

$\cF(G,A)$ is an additive category, where $X \oplus Y = X \amalg Y$.  Let $\cP(G,A)$ denotes its Karoubian completion, whose objects are given by pairs $(X,e_X)$, where $X \in \cF(G,A)$ and $e_X \in \mathrm{End}(X)$ is an idempotent.  For example when $G$ is trivial $\cF(G,A)$ is equivalent to the category of finite rank free $A$-modules, and $\cP(G,A)$ is equivalent to the category of finite rank projective $A$-modules.  In general we have a full embedding of $\cF(G,A)$ and $\cP(G,A)$ into the abelian category of $A[G]$-modules, which identifies $X \in \cF(G,A)$ with the permutation module $A[X]$.  When $A$ is a finite extension of $\bF_p$ or $\bZ_p$, this embedding identifies objects of $\cP(G,A)$ with what are usually called $p$-permutation  modules.  

\subsection{$K$-theoretic version}
\label{subsec:42}
If $X$ is a finite $G$-set, let $K_G(X)$ denote the Grothendieck group of virtual $G$-equivariant complex vector bundles over $X$.  When $X$ is a point we write this as $R(G)$.  In general $K_G(X) = \bigoplus_{x} R(G_x)$ where $x$ runs through $G$-orbit representatives in $X$ and $G_x$ denotes the stabilizer.  Let $\KF(G)$ denote the category whose objects are finite $G$-sets, and whose morphisms from $X$ to $Y$ are given by elements of $K_G(X \times Y)$.  The composition structure is given by convolution of vector bundles: if $\cE$ is a vector bundle over $X \times Y$ and $\cF$ is a vector bundle over $Y \times Z$, the convolution is given by
\begin{equation}
\label{eq:conv}
\left( \cF \ast \cE\right)_{(x,z)} = \bigoplus_{y \in Y} \cF_{(y,z)} \otimes \cE_{(x,y)}
\end{equation}
The endomorphism rings in this category have been considered by Lusztig.  In \cite[\S2.2(b),\S2.2(e)]{Lusztig1} it is shown that when tensored with $\bC$, they are semisimple and split as a product of endomorphism rings of $\cF(\Cent_G(g);\bC)$ where $g \in G$ runs through conjugacy class representatives.  More precisely, Lusztig shows
\begin{equation}
\label{eq:Lusztig}
K_G(X \times X) \otimes \bC \cong \prod_g \Fun(X^g \times X^g,\bC)^{\Cent_G(g)}
\end{equation}
For a commutative ring $A$,  let $\KF(G,A)$ denote the category with the same objects, and with morphisms given by $K_G(X \times Y) \otimes_{\bZ} A$ and the same composition law \eqref{eq:conv}.  Let $\KP(G,A)$ denote its Karoubi completion.

It can be deduced from \eqref{eq:Lusztig} that the isomorphism classes of indecomposable objects of $\KP(G,A)$ are parametrized by conjugacy classes of pairs $(g,L)$ where $g \in G$ and $L$ is a complex irreducible representation of $\Cent_G(g)$, or equivalently by isomorphism classes of irreducible objects in the category of conjugation-equivariant vector bundles on $G$ itself.  We seek a similar parametrization when $A = \bZ_q$.  We will prove the following

\begin{theorem}
\label{thm:pperm}
The set of indecomposable objects of $\KP(G,\bZ_q)$ is in natural bijection with the set of conjugacy classes of pairs $(g,M)$, where $g \in G$ has order prime to $p$ and $M$ is an indecomposable $p$-permutation module (in the usual sense) of the centralizer $\Cent_G(g)$
\end{theorem}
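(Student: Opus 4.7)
The plan is to combine Bonnaf\'e's block decomposition of $R(G) \otimes \bZ_q$ with Lusztig's analysis of the convolution algebras $K_G(X \times X)$, promoting Lusztig's $\bC$-linear identification \eqref{eq:Lusztig} to an integral equivalence of Karoubian categories.

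First I would exploit the fact that $R(G) \otimes \bZ_q$ acts centrally on every morphism space $K_G(X \times Y) \otimes \bZ_q$ (via pullback along $X \times Y \to \pt$) compatibly with convolution. By Bonnaf\'e's theorem, for $q$ sufficiently large the primitive central idempotents $e_{[g]}$ of $R(G) \otimes \bZ_q$ are in natural bijection with $G$-conjugacy classes of $p$-regular elements $g \in G$. Applied to $\KF(G, \bZ_q)$ and then Karoubi-completing, these split the category as
\[
\KP(G, \bZ_q) \;\simeq\; \bigoplus_{[g] \text{ $p$-regular}} \KP(G, \bZ_q)_{[g]},
\]
and every indecomposable of $\KP(G, \bZ_q)$ lies in a unique summand.

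Next, for each $p$-regular $g$, I would construct an equivalence $\KP(G, \bZ_q)_{[g]} \stackrel{\sim}{\to} \cP(\Cent_G(g), \bZ_q)$. Over $\bC$ this follows from Lusztig's formula \eqref{eq:Lusztig}: the $[g]$-summand of $K_G(X \times Y) \otimes \bC$ is identified with $\Fun(X^g \times Y^g, \bC)^{\Cent_G(g)}$, which is the morphism space in the $\bC$-coefficient target, and the convolution-compatibility of Lusztig's isomorphism assembles this into an equivalence of $\bC$-linear Karoubian categories. The main content, and the principal technical obstacle, is the integral refinement: one must show that the idempotent $e_{[g]}$ carries the integral lattice $K_G(X \times Y) \otimes \bZ_q$ onto the integral lattice of morphisms of $\cF(\Cent_G(g), \bZ_q)$ rather than a merely commensurable sublattice. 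I would attack this by combining Bonnaf\'e's explicit integral formula for $e_{[g]}$ with the character formula \eqref{eq:chiuL}.

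Once the integral version of Lusztig's identification is in hand, the indecomposable objects of $\cP(\Cent_G(g), \bZ_q)$ are by definition the indecomposable $p$-permutation $\bZ_q[\Cent_G(g)]$-modules. Summing the equivalences over $p$-regular $[g]$ and noting that $G$-conjugation acts simultaneously on the pair $(g, M)$ delivers the stated bijection. The only substantive work is thus the integral refinement in the previous paragraph; the block decomposition and the assembly of indecomposables are formal consequences of Bonnaf\'e's theorem and the Karoubi completion.
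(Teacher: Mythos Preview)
Your block decomposition via the central action of $R(G)\otimes\bZ_q$ and Bonnaf\'e's idempotents $e_{[g]}$ is correct and matches the paper's \S\ref{subsec:comp}. The genuine gap is the next step: the equivalence $\KP(G,\bZ_q)_{[g]} \simeq \cP(\Cent_G(g),\bZ_q)$ you hope to extract from an ``integral refinement'' of Lusztig's formula is false, and not for subtle reasons --- the two sides have different $\bZ_q$-ranks. Take $g=1$ and $G$ any nontrivial $p$-group. Then $e_{\{1\}}=1$, so on the one hand the hom-space in $\KF_{\{1\}}(G,\bZ_q)$ from $\pt$ to $\pt$ is all of $R(G)\otimes\bZ_q$, of rank equal to the number of conjugacy classes of $G$; on the other hand the hom-space in $\cF(G,\bZ_q)$ from $\pt$ to $\pt$ is $\bZ_q$, of rank one. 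Lusztig's decomposition \eqref{eq:Lusztig} runs over \emph{all} conjugacy classes of $G$, whereas the $\bZ_q$-idempotents see only the $p$-regular ones; the factor $e_{[g]}K_G(X\times Y)\otimes\bC$ collects all Lusztig pieces indexed by classes with $p'$-part conjugate to $g$, not just the single piece at $g$. So the lattices are not commensurable and no amount of juggling Bonnaf\'e's formula and \eqref{eq:chiuL} will make them coincide.

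What the paper actually proves is weaker than an equivalence of categories but suffices for the theorem: it shows only that $\trace_C$ induces a bijection on isomorphism classes of indecomposables. The reduction to $C=\{1\}$ goes through a genuine equivalence $\mathrm{B}\colon \KP_C(G,\bZ_q)\to\KP_{\{1\}}(\Cent_G(c),\bZ_q)$ built from a relative version of Bonnaf\'e's theorem (\S\ref{subsec:relativeversion}) --- note the target is the principal block of the $K$-theoretic category for the centralizer, not the classical $p$-permutation category. For the principal block the augmentation $\cK\twoheadrightarrow\cH$ onto the classical endomorphism algebra has a large kernel $J$, and the substance of the argument (\S\ref{subsec:Hausdorffness}) is that $J$ is Hausdorff, i.e.\ $\bigcap_n J^n=0$, proved by identifying $J$ with the first step of the Atiyah--Hirzebruch filtration on $\KU_p^0((Y\times Y)_{hG})$ and using its multiplicativity. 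Hausdorffness forces $J$ to contain no idempotents, whence idempotents of $\cH$ lift and indecomposables correspond bijectively.
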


\subsection{Components of $\KP(G,\bZ_q)$}
\label{subsec:comp}
The representation ring $R(G)$ is reduced, torsion-free, and of Krull dimension 1 over $\bZ$.  When tensored with $\bZ_q$ it becomes disconnected.  Each connected component is a local ring, thus the primitive idempotents are in natural bijection with maximal ideals, which by \cite[Prop. 6.4]{Atiyah} are in natural bijection with conjugacy classes in $G$ of order prime-to-$p$.

If $C \subset G$ is such a conjugacy class, a formula for the corresponding idempotent $e_C$ , given in \cite{Bonnafe1} is 
\[
e_C = \frac{1}{|G|} \sum_{L \in \mathrm{Irr}_{\bC}(G)} \left( \sum_{g' \in \cS_{p'}(C)} \trace(g'^{-1} \vert L) \right) L
\]
where the inner sum is over those $g' \in G$ whose prime-to-$p$ part belongs to $C$.  Bonnaf\'e proves this formula with $\bZ_q$ replaced by a ring with sufficiently many $p$-power roots of $1$, but as $\cS_{p'}(C)$ is stable under the cyclotomic action of $\bZ_p^*$, the idempotent actually belongs to $R(G) \otimes \bZ_q$.  When $C = \{1\}$, $e_C = e_{\{1\}}$ is called the ``principal idempotent.''  Note that $(1 - e_\{1\})R(G) \otimes \bZ_q$ is exactly the Kuhn ideal of \eqref{eq:Kuhn}.

The categories $\KF(G,\bZ_q)$ and $\KP(G,\bZ_q)$ have an obvious $R(G) \otimes \bZ_q$-linear structure.  If $C \subset G$ is a conjugacy class of order prime to $p$, and $e_C$ is the corresponding primitive idempotent, then
\begin{enumerate}
\item Let $\KF_C(G,\bZ_q)$ denote the category with the same objects as $\KF(G,\bZ_q)$, whose morphisms are given by $e_C(K_G(X \times Y \otimes \bZ_q))$
\item Let $\KP_C(G,\bZ_q)$ denote the idempotent completion of $\KF_C(G,\bZ_q)$.  It is equivalent to the full subcategory of $\KP(G,\bZ_q)$ of pairs $(X,e_X)$ for which $e_X = e_X e_C$.  
\end{enumerate}

\subsection{The trace at $C$}
\label{subsec:traceC}
For a conjugacy class $C \subset G$ of exact order $m$ prime to $p$, fix a representative $c \in C$.  Let $X^c \times Y^c$ denote the $c$-fixed points of $X \times Y$.  If $\cE$ is a virtual $G$-equivariant vector bundle over $X \times Y$ and $(x,y) \in X^c \times Y^c$, the trace of $c$ on $\cE_{(x,y)}$ belongs to $\bZ(\sqrt[m]{1})\subset \bZ_q$.  This gives a map $K_G(X \times Y) \to \Fun(X^c \times Y^c,\bZ_q)^{\Cent_G(c)}$, and in particular
\[
\trace_C: (K_G(X \times Y) \otimes \bZ_q)e_C \to \Fun(X^c\times Y^c,\bZ_q)^{\Cent_G(c)}
\]
Thus $\trace_C$ induces a functor $\KP_C(G,\bZ_q) \to \cP(\Cent_G(c),\bZ_q)$, which we also denote by $\trace_C$.  Theorem \ref{thm:pperm} is a consequence of the following:

\begin{theorem*}
If $M \in \KP_C(G,\bZ_q)$ is indecomposable, so is $\trace_C(M)$, and this assignment gives a bijection on isomorphism classes of indecomposables.
\end{theorem*}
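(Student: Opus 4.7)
The plan is to exhibit, for every finite $G$-set $X$, the ring map
\[
\trace_C\colon A := e_C\bigl(K_G(X \times X) \otimes \bZ_q\bigr) \twoheadrightarrow B := \Fun(X^c \times X^c, \bZ_q)^{\Cent_G(c)}
\]
on endomorphism rings as a surjective $\bZ_q$-algebra homomorphism whose kernel lies in the Jacobson radical of $A$.  Once this is established, the bijection on indecomposables follows from the classical theory of idempotent lifting: primitive idempotents $\tilde e \in A$ map to primitive idempotents in $B$ (no nonzero idempotent lies in a Jacobson radical, and any orthogonal decomposition of $\trace_C(\tilde e)$ lifts to one of $\tilde e$ in $\tilde e A \tilde e$), every primitive idempotent in $B$ lifts uniquely up to conjugation, and isomorphism of objects in the Karoubi completions corresponds to idempotent conjugation upstairs.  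Essential surjectivity onto indecomposable $p$-permutation $\bZ_q[\Cent_G(c)]$-modules is built in: given a finite $\Cent_G(c)$-set $Y$, setting $X = G \times_{\Cent_G(c)} Y$ realizes $Y$ as a $\Cent_G(c)$-summand of $X^c$.

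Surjectivity of $\trace_C$ is handled orbit by orbit.  The target $B$ is $\bZ_q$-freely generated by characteristic functions of $\Cent_G(c)$-orbits $\omega \subset X^c \times X^c$.  For such an $\omega$, pick $(x,y) \in \omega$, set $H = G_{(x,y)}$ and $\Omega = G\cdot(x,y) \subset X \times X$.  The trivial $G$-equivariant line bundle on $\Omega$ maps under $\trace_C$ to the characteristic function of $\Omega^c$; the remaining $\Cent_G(c)$-orbits of $\Omega^c$ are indexed by $H$-conjugacy classes of elements of $H$ that are $G$-conjugate to $c$.  Applying Bonnafé's idempotent formula (\S\ref{subsec:comp}) inside $R(H) \otimes \bZ_q$ produces central elements that separate these classes, which when used as coefficients on the trivial bundle carve $\omega$ out of $\Omega^c$.

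For the Jacobson-radical containment, observe that $A$ is module-finite over the local ring $R := e_C(R(G) \otimes \bZ_q)$, hence $p$-adically complete; consequently $\mathrm{Jac}(A)$ is the preimage in $A$ of the nilradical of the finite-dimensional $\bF_q$-algebra $A/pA$, and by Nakayama $\mathfrak{m}_R \cdot A \subset \mathrm{Jac}(A)$.  Tensoring with $\bQ_q$ and applying Lusztig's decomposition \eqref{eq:Lusztig} identifies $A \otimes \bQ_q \cong \prod_{g \in \cS_{p'}(C)/\mathrm{conj}} \Fun(X^g \times X^g, \bQ_q)^{\Cent_G(g)}$, with $\trace_C$ becoming the projection onto the $g=c$ factor and $R \otimes \bQ_q = \prod_g \bQ_q$ the corresponding product.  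One then shows that, modulo $p$, the non-$c$ factors of Lusztig's decomposition collapse into the nilradical of $A/pA$: this is a Brauer-character phenomenon, with the $e_C$ block concentrated at $c$ in the sense that Brauer characters of its simple constituents vanish off the $G$-class of $c$, so that $A/pA \twoheadrightarrow B/pB$ induces an isomorphism of semisimple quotients.  Pulling back through $p$-adic completion yields $\ker(\trace_C) \subset \mathrm{Jac}(A)$.

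The main obstacle is the Brauer-character step, that is, making precise why the mod-$p$ trace at $c$ captures the full semisimple structure of $A/pA$ on the block $e_C$.  Bonnafé's description of $e_C$ as a sum over $\cS_{p'}(C)$ (\S\ref{subsec:comp}) provides the bookkeeping, but the orbit-by-orbit verification — using restriction to stabilizers $H \subset G$, Bonnafé's further decomposition of $e_C R(H) \otimes \bZ_q$ as a product of local rings indexed by $H$-classes of $G$-conjugates of $c$ in $H$, and the resulting identification of $A \otimes_R \bZ_q$ with $B$ (with $\bZ_q$ an $R$-module via $\trace_C|_R$) — is where most of the technical work lives.
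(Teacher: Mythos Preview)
Your overall target --- showing $\trace_C$ is surjective with kernel contained in $\mathrm{Jac}(A)$, then lifting idempotents --- is correct and is essentially what the paper ends up achieving.  But your route to the Jacobson-radical containment has a genuine gap: the claimed identification $A \otimes_R \bZ_q \cong B$ (with $\bZ_q$ an $R$-module via $\trace_C|_R$) is \emph{false} in general.

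Take $G = S_3$, $p = 3$, $C = \{1\}$, $X = G/A_3$.  Each $G$-orbit in $X\times X$ has stabiliser $A_3$, so the $R$-action on the corresponding summand $R(A_3)\otimes\bZ_3$ of $A$ is via restriction from $R = e_{\{1\}}(R(S_3)\otimes\bZ_3)$.  The rank-one ideal $\ker(\trace_1|_R)$ is generated by $2\cdot\mathrm{triv}-\text{(2-dim)}$, which restricts to $2 - \omega - \omega^2 = -(\omega-1)(\omega+2)$ in $\bZ_3[\omega]/(\omega^3-1) = R(A_3)\otimes\bZ_3$.  Reducing mod~$3$ gives $(\omega-1)(\omega+2)\equiv(\omega-1)^2$, so the quotient $R(A_3)\otimes\bZ_3\,/\,((\omega-1)(\omega+2))$ has residue ring $\bF_3[\omega]/(\omega-1)^2$ of dimension~$2$, not~$1$.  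Thus $A\otimes_R\bZ_3$ surjects onto $B$ with nontrivial torsion kernel.  The underlying issue is that restriction $R\to R(H)\otimes\bZ_q$ need not carry $\ker(\trace_c|_R)$ onto generators of the corresponding ideal downstairs; your orbit-by-orbit bookkeeping cannot repair this.

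Your earlier ``Brauer-character'' paragraph names the right phenomenon but is not a proof, as you yourself flag.  What is missing is precisely the input the paper supplies.  The paper first uses Bonnaf\'e's theorem to build an honest \emph{equivalence of categories} $\mathrm{B}\colon\KP_C(G,\bZ_q)\xrightarrow{\sim}\KP_{\{1\}}(\Cent_G(c),\bZ_q)$ intertwining $\trace_C$ with $\trace_{\{1\}}$ (\S\ref{subsec:mapB}--\S\ref{subsec:relativeversion}); this is stronger than merely exploiting the $R$-module structure on $A$.  For $C=\{1\}$ the paper then identifies the endomorphism ring with $K((Y\times Y)_{hG})\otimes\bZ_p$ and invokes the Atiyah--Hirzebruch filtration: the kernel $J = \ker(\trace_{\{1\}})$ is exactly $F^{\geq 1}$, one has $F^{\geq s}F^{\geq t}\subset F^{\geq s+t}$, and the filtration is Hausdorff and complete.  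It is this \emph{topological} input that forces $J\subset\mathrm{Jac}$, via convergence of $(1+j)^{-1}=\sum(-j)^n$.  A purely algebraic substitute along the lines you sketch does not seem to be available.
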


\subsection{The map $\mathrm{B}$}
\label{subsec:mapB}
To prove the Theorem of \S\ref{subsec:traceC}, we will in  \S\ref{subsec:BT} and \S\ref{subsec:relativeversion} reduce to the case $C = \{1\}$, by constructing an equivalence $\mathrm{B}$ making the triangle commute:
\[
\xymatrix{
\KP_C(G,\bZ_q) \ar[dd]_{\mathrm{B}} \ar[dr]^{\trace_C} \\
& \cP(\Cent_G(c),\bZ_q) \\
\KP_{\{1\}}(\Cent_G(c),\bZ_q) \ar[ur]_{\trace_{\{1\}}}
}
\]
The case of $C = \{1\}$ is treated in \S\ref{subsec:KandH}--\S\ref{subsec:Hausdorffness}.

\subsection{Bonnaf\'e's theorem}
\label{subsec:BT}
Let $e_C$ be as in \S\ref{subsec:comp}.  In \cite{Bonnafe1}, Bonnaf\'e shows that each of the blocks $(R(G) \otimes \bZ_q)e_C$ is isomorphic to the principal block $(R(\Cent_G(c)) \otimes \bZ_q)e_{\{1\}}$.  The isomorphism is via the ``$c$-translated restriction map''
\begin{equation}
\label{eq:bonnafe}
V \mapsto \frac{\trace(c\vert V)}{\dim(V)} \Res^G_{\Cent_G(c)} V \qquad \qquad \text{when $V$ is irreducible}
\end{equation}
Let us denote the map $(R(G) \otimes \bZ_q) e_C \to (R(\Cent_G(c)) \otimes \bZ_q) e_{\{1\}}$ obtained by first applying \eqref{eq:bonnafe} and then multiplying by the principal idempotent $e_1 \in R(\Cent_G(c))$, by $\beta_0$.  Thus, Bonnaf\'e proves $\beta_0$ is an isomorphism of rings.

\subsection{Relative version of Bonnaf\'e's theorem}
\label{subsec:relativeversion}
Let $X$ be a finite $G$-set, so that $K_G(X) \otimes \bZ_q$ is a $R(G) \otimes \bZ_q$-module.  We wish to relate $(K_G(X) \otimes \bZ_q)e_C$ to the fixed-point set $X^c$.

It is observed in \cite[\S 1.B]{Bonnafe1} that $R(\Cent_G(c))$ has a ring automorphism $t_c$ carrying each irreducible representation $L$ to $\frac{\trace(c\vert L)}{\dim(L)} L$.  This is compatible with a natural automorphism of $K_{\Cent_G(c)}(X^c)$ which we will also denote by $t_c$, sending each irreducible vector bundle $\cL$ to $\frac{\trace(c\vert \cL_y)}{\dim(\cL_y)}\cL$, where $y \in X^c$ is an arbitrary point in the support of $\cL$.

Define a map
\[
\beta_X:(K_G(X) \otimes \bZ_q)e_C \to (K_{\Cent_G(c)}(X^c) \otimes \bZ_q) e_{\{1\}} \qquad \beta_X(\cE) := e_{\{1\}} t_c(\cE \vert_{X^c})
\]

\begin{lemma*}
The map $\beta_X$ is an isomorphism.
\end{lemma*}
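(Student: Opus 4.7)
The plan is to reduce, via a block decomposition, to the absolute Bonnaf\'e isomorphism of \S\ref{subsec:BT}. The key observation is that the ring automorphism $t_c$ of $R(\Cent_G(c)) \otimes \bZ_q$ (and hence of $K_{\Cent_G(c)}(X^c) \otimes \bZ_q$) satisfies $\chi_{\{h\}} \circ t_c = \chi_{\{ch\}}$, since $c$ is central in $\Cent_G(c)$ and so acts as a scalar on each irreducible. Consequently $t_c(e_{\{c\}}) = e_{\{1\}}$, and therefore $\beta_X = t_c \circ \gamma_X$, where
\[
\gamma_X(\cE) := e_{\{c\}} \cdot \cE\vert_{X^c}, \qquad \gamma_X : (K_G(X) \otimes \bZ_q) e_C \to (K_{\Cent_G(c)}(X^c) \otimes \bZ_q) e_{\{c\}}.
\]
Since $t_c$ is an isomorphism, it suffices to show that $\gamma_X$ is one.

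Both source and target of $\gamma_X$ are additive in $X$, so we reduce to $X = G/H$ for some subgroup $H \subset G$. Using $K_G(G/H) = R(H)$, the restriction of $e_C^G$ from $R(G) \otimes \bZ_q$ to $R(H) \otimes \bZ_q$ equals $\sum_D e_D^H$, summed over $H$-conjugacy classes $D \subset C \cap H$. A standard computation gives $(G/H)^c = \coprod_D \Cent_G(c)/\Cent_{g_D H g_D^{-1}}(c)$, where $g_D$ is chosen so that $g_D^{-1} c g_D =: d \in D$; conjugation by $g_D$ identifies $\Cent_{g_D H g_D^{-1}}(c) \cong \Cent_H(d)$ and sends $c$ to $d$, so $e_{\{c\}}^{\Cent_G(c)}$ becomes $e_{\{d\}}^{\Cent_H(d)}$ on the $D$-summand. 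Both source and target of $\gamma_{G/H}$ therefore split as direct sums indexed by $D$.

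A short unpacking shows that $\gamma_{G/H}$ is block-diagonal with respect to these decompositions: the $D'$-component of $\gamma_{G/H}(V e_D^H)$ equals $V\vert_{\Cent_H(d')} \cdot (e_D^H\vert_{\Cent_H(d')}) \cdot e_{\{d'\}}^{\Cent_H(d')}$, and $e_D^H\vert_{\Cent_H(d')}$ is a sum of $e_E^{\Cent_H(d')}$ over $\Cent_H(d')$-conjugacy classes $E \subset D \cap \Cent_H(d')$, so $e_{\{d'\}}^{\Cent_H(d')}$ survives only when $d' \in D$, i.e.\ when $D' = D$. On the $D$-diagonal, $\gamma_{G/H}$ becomes the ring homomorphism $V \mapsto V\vert_{\Cent_H(d)} \cdot e_{\{d\}}^{\Cent_H(d)}$: this is a map of commutative local $\bZ_q$-algebras of the same finite rank (namely, the number of $p$-power conjugacy classes in $\Cent_H(d)$), and comparing characters $\chi_{\{du\}}$ for $u$ of $p$-power order in $\Cent_H(d)$ shows that it is an isomorphism---this is essentially the content of Bonnaf\'e's theorem of \S\ref{subsec:BT}, twisted by $t_d$. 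Hence $\gamma_{G/H}$, and therefore $\beta_X$, is an isomorphism.

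The main obstacle is the combinatorial bookkeeping required to match the two direct-sum decompositions under the identification $(G/H)^c \cong \coprod_D \Cent_G(c)/\Cent_{g_D H g_D^{-1}}(c)$; once that is done, all the ring-theoretic content is supplied by the absolute case.
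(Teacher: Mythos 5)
Your proposal is correct and follows essentially the same route as the paper: reduce to orbits $X=G/H$, use Bonnaf\'e's formula for the restriction of $e_C$ to $R(H)\otimes\bZ_q$, decompose $(G/H)^c$ into $\Cent_G(c)$-orbits indexed by the $H$-classes $D\subset C\cap H$ with stabilizers $\Cent_H(d)$, and reduce each diagonal block to the absolute isomorphism $\beta_0$ of \S\ref{subsec:BT}. Your preliminary factorization $\beta_X=t_c\circ\gamma_X$ (using $t_c(e_{\{c\}})=e_{\{1\}}$) and the explicit check of block-diagonality are just a more spelled-out version of the commutative square the paper ends with.
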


\begin{proof}
It suffices to prove this when $X = G/H$.  We will use the facts that $K_G(G/H) = R(H)$ and that \cite[\S 2.D]{Bonnafe1} the restriction map $R(G) \otimes \bZ_q \to R(H) \otimes \bZ_q$ carries $e_C$ to $\sum_D e_D$ where $D$ runs through $H$-conjugacy classes in $H \cap C$.

If $H \cap C$ is empty, then both the domain and codomain of $\beta_X$ are zero.  Indeed $(G/H)^c$ is empty when no conjugacy of $c$ belongs to $H$, and if $H \cap C$ is empty then the restriction of $e_C$ to $R(H) \otimes \bZ_q$ is zero.

So suppose $H \cap C$ is not empty, and without loss of generality that $c \in H$.  In this ace we have a map from $(G/H)^c$ to the set of $H$-conjugacy classes in $C \cap H$, by sending $gH$ to the class of $g^{-1} c g$.  Denote this conjugacy class by $\delta(gH)$, and let $\delta^{-1}(D)$ denote the fiber above the conjugacy class $D \subset H \cap C$.  Then $\Cent_G(c)$ acts transitively on $\delta^{-1}(D)$, and the stabilizer at $gH \in \delta^{-1}(D)$ is $\Cent_H(g^{-1} c g)$.  (The map $\delta$ is the connecting homomorphism for a long exact sequence in nonabelian cohomology of the group generated by $c$, see \cite[\S I.5.4-\S I.5.5]{Serre}.)  Just as we write $c$ for a representative of $C$, let us write $d$ for the representative $g^{-1} c g \in D \subset G \cap C$.  Thus we have a decomposition
\[
(K_{\Cent_G(c)}(X^c) \otimes \bZ_q)e_{\{1\}} = \bigoplus_D R(\Cent_H(d)) e_{\{1\}}
\]
Now the Lemma follows from the commutativity of the square
\[
\xymatrix{
(K_G(X) \otimes \bZ_q)e_C \ar[r]^{\beta_X} \ar[d] & K_{\Cent_G(c)}(X^c) e_{\{1\}} \ar[d] \\
\bigoplus_D (R(H) \otimes \bZ_q) e_D \ar[r]_{\bigoplus \beta_0} &  \bigoplus_D R(\Cent_H(d))e_{\{1\}}
}
\]
Here the lower arrow applies Bonnaf\'e's isomorphism $\beta_0$ to each summand, with $G$ replaced by $H$, and $(C,c)$ replaced by $(D,d)$.
\end{proof}

Now we define the equivalence $\mathrm{B}$ of \S\ref{subsec:mapB}. It suffices to define an equivalence $\mathrm{B}:\KF_C(G,\bZ_q) \to \KF_{\{1\}}$.  On objects we put $\mathrm{B}(X) = X^c$, and on him spaces we define
\[
\mathrm{B} = \beta_{X \times Y} : (K_G(X\times Y) \otimes \bZ_q)e_C \to (K_G(X^c \times Y^c) \otimes \bZ_q)e_{\{1\}}
\]
According to the Lemma, this is an equivalence of categories, and the commutativity of the triangle of \S\ref{subsec:mapB}.

\subsection{The $\bZ_q$-algebras $\cK$ and $\cH$}
\label{subsec:KandH} 
Let $Y$ be a finite $G$-set with the property that every other finite $G$-set admits a $G$-equivariant inclusion into $Y^{\amalg n}$ for some $n$.  For example, one could take $Y$ to be the set of all cosets $gG' \subset G$ where $G'$ runs through all subgroups.  We define two associative rings:
\begin{enumerate}
\item Let $\cK$ be the endomorphism ring of $Y$ in $\KF_{\{1\}}(G,\bZ_q)$,
\item Let $\cH$ be the endomorphism ring of $Y$ in $\cF(G,\bZ_q)$
\end{enumerate}
Since every object of either category is a summand of $Y^{\amalg n}$ for some $n$, the idempotent completion of the category of finite rank free $\cK$-modules, resp. $\cH$-modules, is the same as the Karoubi completion of the category of finite rank free $\cK$-modules, resp. $\cH$-modules.
Thus $\KP(G,\bZ_q)$ is equivalent to the category of finitely-generated projective $\cK$-modules and $\cP(G,\bZ_q)$ is equivalent to the category of finitely-generated projective $\cH$-modules.

\subsection{The augmentation ideal $J \subset \cK$}
As $\cK$ and $\cH$ are finite rank and free over the complete ring $\bZ_q$, they are both semiperfect (see \S\ref{subsec:cov}), thus the categories $\KP(G,\bZ_q)$ and $\cP(G,\bZ_q)$ have the Krull-Schmidt property.  In particular, each indecomposable is a summand of the generating object $Y$.

Let $J$ denote the kernel of the homomorphism $\cK \to \cH$.  We claim that if $J$ is \emph{Hausdorff}, in the sense that $\bigcap J^n = 0$, then the Theorem of \S\ref{subsec:traceC} holds when $C = \{1\}$.  Indeed if this is the case there can be no idempotent elements belonging to $J$, thus every indecomposable module is mapped to some nonzero $\cH$-module, and a standard argument shows that any idempotent of $\cH$ lifts to one of $\cK$.

We make one further reduction.  Note that $\cK = \cK_0 \otimes_{\bZ_p} \bZ_q$, where $\cK_0$ is the endomorphism ring of $Y$ in $\KF_{\{1\}}(G,\bZ_p)$.  Similarly $\cH = \cH_0 \otimes_{\bZ_p} \bZ_q$. The augmentation map $\cK \to \cH$ is induced by the augmentation map $\cK_0 \to \cH_0$ with kernel $J_0$.  As $\bZ_q/\bZ_p$ is flat, $J^n = J_0^n \otimes \bZ_q$.  To show that $J$ is Hausdorff it therefore suffices to show $J_0$ is Hausdorff, i.e. we may assume $q = p$.

\subsection{Hausdorffness of $J$}
\label{subsec:Hausdorffness}
We suppose $q = p$.  To prove the $J \subset \cK$ is Hausdorff it suffices to find any Hausdorff filtration of $\cK$
\[
\cK = \cK^{\geq 0} \supset \cK^{\geq 1} \supset \cK^{\geq 2} \supset \cdots
\]
such that $\cK^{\geq 1} = J$ and $\cK^{\geq s} \cK^{\geq t} \subset \cK^{\geq s+t}$.  In this section we explain how the Atiyah-Hirzebruch filtration provides such a $\cK^{\geq \bullet}$, completing the proof of Theorem \ref{thm:pperm}.

Let $(Y \times Y)_{hG}$ denote the Borel construction of the $G$-action on $Y \times Y$.  It is the disjoint union of classifying spaces for subgroups $G_{y_1} \times G_{y_2} \subset G$, where $(y_1,y_2)$ runs through orbit representatives of $G$ on $Y \times Y$.  It follows from \cite[Thm. 7.2]{Atiyah} that $\cK$ is naturally identified with $K((Y\times Y)_{hG}) \otimes_{\bZ} \bZ_p$.

For a CW complex $S = \bigcup S_k$, the Atiyah-Hirzebruch filtration of $K^0(S)$ is given by $F^{\geq k} K(S) = \ker(K(S) \to K(S_{k-1}))$, the set of virtual vector bundles that vanish over a $(k-1)$-skeleton.  As $K(S)$ is defined to be the inverse limit of the $K(S_k)$, this is a Hausdorff filtration.  The filtration is compatible with the tensor product of vector bundles: 
\begin{equation}
\label{eq:231}
\text{
for $\cE_1 \in K^{\geq s}(S)$ and $\cE_2 \in K^{\geq t}(S)$, the tensor product $\cE_1 \otimes \cE_2 \in K^{\geq s+t}(S)$.}
\end{equation}
See \cite[\S\S2--3]{Atiyah} for proofs and references.

Put $\cK^{\geq s} = F^{\geq s}K ((Y \times Y)_{hG}) \otimes \bZ_p$.  To complete the proof we must show that 
\begin{equation}
\label{eq:232}
\text{
for $\cE_1\in \cK^{\geq s}$ and $\cE_2 \in \cK^{\geq t}$, the convolution $\cE_1 \ast \cE_2 \in \cK^{\geq s+t}$.  
}
\end{equation}
This condition can be checked on stalks.  By definition, the stalk of $\cE_1 \ast \cE_2$ at $(y_1,y_2)$ is the sum
$
\bigoplus_{y \in Y} (\cE_1)_{y_1,y} \otimes (\cE_2)_{y,y_2}
$, so that \eqref{eq:232} is consequence of \eqref{eq:231}.

\appendix
\pagestyle{app}

\newcommand{\ein}{\mathrm{E}_\infty}
\newcommand{\rw}[2]{\mathrm{Rep}( #1, #2)}
\newcommand{\rwk}[1]{\mathrm{Rep}( #1, \Kp)}
\newcommand{\md}{\mathrm{LMod}}
\newcommand{\Kp}{\mathbf{K}_p}
\newcommand{\coh}{\mathrm{Coh}}
\newcommand{\supp}{\mathrm{Supp}}
\newcommand{\fun}{\mathrm{Fun}}
\newcommand{\spec}{\mathrm{Spec}}
\newtheorem*{notation}{Notation}
\newcommand{\rwkp}{\rwk{\mathbf{Z}/p}}

\section{Representations in finite $\Kp$-modules, by Akhil Mathew}

\subsection{Generalities}
Let $G$ be a finite group. In this appendix, we consider certain aspects of
the representation theory of $G$ in \emph{perfect} $\Kp$-module spectra. 
We raise seemingly natural questions, which we are only able to answer
in certain specific cases. 
Recall that if $R$ is an $\ein$-ring, then we let $\md^\omega(R)$ denote the
$\infty$-category of perfect $R$-modules. 

\begin{definitionAM} 
Let $R$ be an $\ein$-ring spectrum and let $G$ be a finite group.
We define $\rw{G}{R} = \mathrm{Fun}(BG, \md^\omega(R))$ to be the
$\infty$-category of perfect $R$-modules equipped with a $G$-action.
\end{definitionAM} 

We observe that $\rw{G}{R}$ is naturally a symmetric monoidal, stable
$\infty$-category with the $R$-linear tensor product, which is exact in both
variables. We may think of it as the analog of the category of
\emph{finite-dimensional} representations of a group over a field. 
As in that case, we have a natural fully faithful, symmetric monoidal functor
\( \rw{G}{R} \to \md( R[G]),   \)
which prolongs to a symmetric monoidal, colimit-preserving functor 
\begin{equation} \label{indfunctor} \mathrm{Ind} ( \rw{G}{R}) \to \md( R[G]).
\end{equation}
In general, \eqref{indfunctor} is not an equivalence,  unless the order of $G$
is invertible in $R$ or in certain special situations. 
Nonetheless, the $\infty$-category $\rw{G}{R}$ is often of interest, and one
can ask several basic questions of it. 

The first, and perhaps most basic, question is whether one has a reasonable set
of building 
blocks for $\rw{G}{R}$, i.e., that there are somehow not too many
$G$-actions on perfect $G$-modules.
\begin{questionAM} 
Is there a finite set of objects in $\rw{G}{R}$ that generate it as a
thick subcategory?
\end{questionAM} 

Such a result  for the $\ein$-ring $R$ would be analogous to the fact that a finite group has only
finitely many irreducible representations over a given field.
There are certain natural objects that one can 
always construct. 
Given any finite $G$-set $S$, consider $R[S] 
\stackrel{\mathrm{def}}{=} R \wedge \Sigma^\infty_+ S
\in \rw{G}{R}$. When $S$ is the one-point $G$-set, we will sometimes write
simply $R$ for the associated (unit) object in $\rw{G}{R}$ when further confusion is unlikely to arise. 

\begin{questionAM} 
\label{generateperm}
Do the objects $R[S] \in \rw{G}{R}$, for $S$ ranging over the finite 
$G$-sets $S$, generate $\rw{G}{R}$ as a thick subcategory?
\end{questionAM}

The main result of the appendix is that the answer to Question \ref{generateperm} is
affirmative in the case of $R = \Kp$ and $G = \mathbf{Z}/p$.

\subsection{Examples}

To begin with, we include some results and counterexamples to show that
Question \ref{generateperm} is  nontrivial in general, even for discrete
$\ein$-rings. 

\begin{propositionAM} 
\label{rationalcase}
Suppose $R$ is an $\ein$-ring
and $G$ is a finite group. Suppose $|G| $ is invertible in $\pi_0(R)$. Then
$\rw{G}{R}$ is generated as a thick subcategory by 
$R[G]$.
\end{propositionAM}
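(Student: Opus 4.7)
The plan is to exhibit every object $M \in \rw{G}{R}$ as a retract of an object of the form $N \otimes_R R[G]$, where $N$ is a perfect $R$-module and $R[G]$ carries only the regular $G$-action; such objects visibly lie in the thick subcategory generated by $R[G]$ because tensoring with $R[G]$ is an $R$-linear exact functor that sends $R$ to $R[G]$.

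First I would establish that the trivial module $R \in \rw{G}{R}$ is a retract of the regular module $R[G]$. The norm map $\eta : R \to R[G]$, adjoint to the identity of $R$ in $\md(R)$, and the augmentation $\epsilon : R[G] \to R$, adjoint to $|G|\cdot \mathrm{id}_R$, are both $G$-equivariant (with the trivial action on $R$), and their composite $\epsilon \circ \eta$ is multiplication by $|G|$ on $R$. Since $|G|$ is invertible in $\pi_0(R)$, this composite is an equivalence, and so $R$ splits off $R[G]$ in $\rw{G}{R}$.

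Next, tensoring the previous retraction with any $M \in \rw{G}{R}$ shows that $M$ is a retract of $M \otimes_R R[G]$ in $\rw{G}{R}$, where the $G$-action on the tensor product is diagonal. The key step is then the \emph{shearing} (or untwisting) equivalence
\[
M \otimes_R R[G] \;\simeq\; U(M) \otimes_R R[G],
\]
in $\rw{G}{R}$, where $U : \rw{G}{R} \to \md^\omega(R)$ is the forgetful functor and the right-hand side carries the $G$-action only on the $R[G]$ factor. At the level of $1$-categories this is the familiar map $m \otimes g \mapsto (g^{-1}m) \otimes g$; in the $\infty$-categorical setting, it is most cleanly obtained by identifying the functor $(-) \otimes_R R[G]$ (with action only on $R[G]$) with the composite $FU$ of the forgetful functor $U$ and its left adjoint $F$ (given by free $G$-action), and observing that the shearing equivalence is simply the base change/projection formula for the pullback-pushforward pair along the basepoint $\mathrm{pt} \to BG$.

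Finally, since $U(M)$ is a perfect $R$-module, it lies in the thick subcategory of $\md^\omega(R)$ generated by $R$. Applying the exact, colimit-preserving, $R$-linear functor $(-) \otimes_R R[G] : \md^\omega(R) \to \rw{G}{R}$, the object $U(M) \otimes_R R[G]$ lies in the thick subcategory of $\rw{G}{R}$ generated by $R \otimes_R R[G] = R[G]$. Combining this with the retraction of $M$ off of $M \otimes_R R[G] \simeq U(M) \otimes_R R[G]$ yields the claim. The main conceptual obstacle is the $\infty$-categorical formulation of the shearing equivalence in the third paragraph; everything else is formal manipulation of the retraction provided by the invertibility of $|G|$.
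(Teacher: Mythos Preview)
Your argument is correct and essentially identical to the paper's: both show that $R$ is a retract of $R[G]$ via the augmentation and its dual (since their composite is multiplication by $|G|$), tensor with $M$, and then invoke the projection formula---your ``shearing equivalence'' $M \otimes_R R[G] \simeq U(M) \otimes_R R[G]$ is exactly the identification $M \otimes_R R[G] \simeq \Ind_1^G \Res_1^G(M)$ used in the paper. One minor wording issue: you describe $\epsilon$ as ``adjoint to $|G|\cdot \mathrm{id}_R$,'' but in fact both $\eta$ and $\epsilon$ are adjoint to the identity (under the two different adjunctions $\Ind \dashv \Res$ and $\Res \dashv \Ind$); the factor $|G|$ appears only in the composite $\epsilon \circ \eta$, which is what you actually use.
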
 
\begin{proof} 
In this case, we consider the map $\phi: R[G] \to R$ coming from the map of
$G$-sets $G \to \ast$  and its dual $\psi: R \to R[G]$. Then $ \frac{1}{|G|}
\psi \circ \phi$ is an idempotent endomorphism of $R[G]$ whose image is
identified with the trivial representation $R$. 
If $M \in \rw{G}{R}$, then $M \simeq M \otimes R$ is thus a retract of $M
\otimes R[G]$. However, the projection formula implies that $M
\otimes R[G]$ is the induced object $\mathrm{Ind}_1^G \mathrm{Res}^G_1 (M)$,
which clearly belongs to the thick subcategory generated by 
$\mathrm{Ind}_1^G \mathrm{Res}^G_1 (R) \simeq R[G]$. Taking retracts, it
follows that $M$ belongs to the thick subcategory generated by $R[G]$.
\end{proof}

Next, we give a counterexample in modular characteristic.
\begin{exampleAM} 
Let $R $ denote the commutative ring $
\mathbf{F}_2[\epsilon]/\epsilon^2$ (identified with a discrete
$\ein$-ring) and $G = \mathbf{Z}/2$. 
Then, in $\rw{\mathbf{Z}/2}{R}$, we can construct an invertible object $\mathcal{L}$: namely, we take
a free $R$-module of rank one, but with $\mathbf{Z}/2$-action given by
multiplication by $1 + \epsilon$. We claim that $\mathcal{L}$ does not belong to the
thick subcategory generated by the permutation modules. 

Indeed, since $ 2 = 0$ in $R$, the permutation module $R^{ \mathbf{Z}/2}$
belongs to the thick subcategory generated by the trivial representation
$R$. It
suffices to show that $\mathcal{L}$ does not belong to the thick subcategory
generated by $R$. Since $\mathcal{L}$ is  invertible (with inverse
$\mathcal{L}$ again), it is equivalent to showing that $R$ does not
belong to the thick subcategory generated by $\mathcal{L}$. 

However, we have an exact functor
\[ \rw{\mathbf{Z}/2}{R} \to \rw{\mathbf{Z}/2}{\mathbf{F}_2},  \]
given by restriction of scalars. 
This functor carries $\mathcal{L}$ to the permutation
object $\mathbf{F}_2[\mathbf{Z}/2] \in \rw{\mathbf{Z}/2}{\mathbf{F}_2}$. It
carries the unit of $\rw{\mathbf{Z}/2}{R}$ to the sum of two copies of the
unit in $\rw{\mathbf{Z}/2}{\mathbf{F}_2}$. 
However, the thick subcategory generated in $\rw{\mathbf{Z}/2}{\mathbf{F}_2}$
by $\mathbf{F}_2[{\mathbf{Z}/2}]$ does not contain the unit. This implies that
the thick subcategory $\mathcal{L}$ generates cannot contain the unit
$R$ in $\rw{\mathbf{Z}/2}{R}$. 
\end{exampleAM}

Nonetheless, we prove that if the ring is better behaved this phenomenon does
not occur. 
\begin{theoremAM} 
\label{generatebypermutation_regular}
Let $R$ be a regular noetherian ring of finite Krull dimension and let $G$ be a finite group.
Then the $\infty$-category $\rw{G}{R}$
is generated as a thick subcategory by the permutation modules $\{R[{G/H}]\}|_{H
\subset G}$.
\end{theoremAM}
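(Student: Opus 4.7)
My plan is noetherian induction on the support of the module in $\spec R$, with the inductive step reduced via base change to the analogous statement over a residue field. Since $R$ is regular noetherian of finite Krull dimension, every finitely generated $R$-module is $R$-perfect, so $\rw{G}{R}$ coincides with the bounded derived $\infty$-category of finitely generated $R[G]$-modules. Using the standard $t$-structure and the closure of $\mathcal{T} := \langle R[G/H] : H \subseteq G \rangle$ under shifts and cofiber sequences, it suffices to show that every finitely generated $R[G]$-module $M$ lies in $\mathcal{T}$. I would then induct on the closed subset $Z := \supp_R M \subseteq \spec R$, the case $Z = \emptyset$ being trivial.

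For the inductive step, I pick an irreducible component $Z_0 \subseteq Z$ with generic point $\eta$, write $\mathfrak{p}$ for the corresponding prime of $R$, and form the derived fiber $M_\eta := M \otimes_R^L k(\eta) \in \rw{G}{k(\eta)}$. The key input is the field case: for a field $k$ and any finite group $G$, the $\infty$-category $\rw{G}{k}$ is generated as a thick subcategory by $\{k[G/H]\}$, which follows by exhibiting every finite-dimensional $k[G]$-module, via a composition series, as an iterated cofiber extracted from the regular representation $k[G]=k[G/\{e\}]$ and the trivial module $k=k[G/G]$ (using that thick subcategories are closed under direct summands). Applied to $M_\eta$, this produces a finite pattern of triangles and summands exhibiting $M_\eta$ as an iterated cofiber of modules $k(\eta)[G/H]$. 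I would then lift this pattern to $\rw{G}{R}$: each $k(\eta)[G/H]$ equals the derived base change of $R[G/H]$, and each morphism in the pattern lifts to a morphism over the local ring $R_{(\mathfrak{p})}$ after possibly clearing denominators in $R \setminus \mathfrak{p}$; finiteness of the pattern lets a single localization suffice for all morphisms. This produces an object $N \in \mathcal{T}$ and a morphism $f \colon N \to M$ in $\rw{G}{R}$ such that $f \otimes_R^L k(\eta)$ is an equivalence. The cofiber $C := \mathrm{cofib}(f)$ then has $\eta \notin \supp(C)$; iterating over the finitely many irreducible components of $Z$ yields a cofiber with support strictly smaller than $Z$, which by the inductive hypothesis lies in $\mathcal{T}$, and the triangle $N \to M \to C$ gives $M \in \mathcal{T}$.

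The hardest step is the lifting. A morphism between objects in $\rw{G}{k(\eta)}$ does not automatically descend to one in $\rw{G}{R_{(\mathfrak{p})}}$, because the $\Hom$-complexes fail to commute with base change along $R_{(\mathfrak{p})} \to k(\eta)$ unless the source is $R[G]$-perfect --- which $R[G/H]$ is not when the residue characteristic divides $|G|$. The strategy is to lift the pattern one stage at a time: at each stage, the obstruction to lifting a morphism lies in a finitely generated $\Ext$-group over the regular local ring $R_{(\mathfrak{p})}$, which by derived Nakayama vanishes after reduction to $k(\eta)$; enlarging the localization finitely many times clears all obstructions, and spreading the resulting construction back to $R$ by noetherian approximation produces the required pair $(N, f)$. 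The finiteness of the pattern, supplied by the field case, is indispensable here, since it bounds the number of localizations that must be performed.
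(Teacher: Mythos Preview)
Your strategy has a genuine gap at the lifting step. Once $\mathfrak{p}$ is not a minimal prime of $R$, the map $R_{\mathfrak{p}} \to k(\eta)$ is not flat, and morphisms between $G$-representations do not lift along it. Concretely, take $R = \mathbf{Z}_p$, $G = \mathbf{Z}/p$, $\mathfrak{p} = (p)$, and let $I \subset \mathbf{Z}_p[G]$ be the augmentation ideal: then $I^{G} = 0$ while $(I \otimes \mathbf{F}_p)^{G} \cong \mathbf{F}_p$, so the nonzero map $\mathbf{F}_p = \mathbf{F}_p[G/G] \to I \otimes \mathbf{F}_p$ hitting a fixed vector has no lift $\mathbf{Z}_p \to I$. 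Your appeal to ``derived Nakayama'' runs in the wrong direction --- Nakayama lets you detect equivalences after reduction, it does not manufacture lifts of morphisms --- and ``enlarging the localization'' in $R \setminus \mathfrak{p}$ only handles the flat spreading-out from $R_{\mathfrak{p}}$ to an open of $\spec R$, not the non-flat passage $R_{\mathfrak{p}} \to k(\eta)$. There is a second problem hiding behind this one: the $N$ you build by lifting the blocks $k(\eta)[G/H]$ to $R[G/H]$ has full support in $\spec R$, so $\supp(\mathrm{cofib}(f))$ need not be contained in $Z$ at all, and the noetherian induction does not move forward. (Finally, your field-case sketch using only $k[G]$ and $k$ is incomplete; one needs the Sylow transfer, hence $k[G/G_p]$, before composition series can be invoked.)

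The paper avoids residue fields and lifting altogether. It first uses the \emph{flat} localization $R \to R \otimes \mathbf{Q}$, where clearing denominators genuinely works, to embed $M_0$ (after adding a summand and scaling) into an induced module $\mathrm{Ind}_1^G(M_0/\mathrm{tors})$; this reduces the problem to a torsion quotient $X = \bigoplus_p X_p$. For each $X_p$ the Sylow transfer reduces to a $p$-group acting on a $p$-power-torsion module, and then one argues directly over $R$: the key input is that for a $p$-group $G$ acting $R$-linearly on a finitely generated $p$-power-torsion module $M_0$, the fixed submodule $M_0^{G}$ has the same support as $M_0$, so $M_0/M_0^{G}$ has strictly smaller length at every minimal prime of $\supp M_0$. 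A double induction --- noetherian on the support, then on the total length at its minimal primes --- finishes the argument with no lifting required.
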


Our strategy is to reduce to the case of a $p$-torsion object (for some $p$)
and then to prove a more precise result via a bit of commutative algebra. 
In the rest of this subsection, when we want to work with 
discrete modules over a discrete noetherian ring, we will use the subscript $0$
to avoid confusion.

\begin{definitionAM} 
Let $R$ be a noetherian ring and $M_0$ a finitely generated (discrete) $R$-module. Let
$\mathfrak{p}$ be a minimal prime ideal in $\supp M_0$.  In this case,
$(M_0)_{\mathfrak{p}}$ is a module over $R_{\mathfrak{p}}$ whose support is
concentrated at the closed point $\mathfrak{p}$ and is therefore of finite
length. We let $n_{\mathfrak{p}}(M_0) =
\ell_{R_{\mathfrak{p}}}((M_0)_{\mathfrak{p}})$. 
\end{definitionAM}

\begin{lemmaAM} 
\label{fixedpointnonzero}
Let $R$ be a noetherian ring and let $M_0$ be a discrete
$R$-module which is finitely generated and $p$-power torsion. Suppose given an $R$-linear action of a
$p$-group $G$ on $M_0$. Then the support of $M_0^{G}$
is equal to that of $M_0$.
\end{lemmaAM}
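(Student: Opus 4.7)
The plan is to prove both inclusions separately. The easy direction $\mathrm{Supp}(M_0^G) \subset \mathrm{Supp}(M_0)$ is automatic because $M_0^G$ is an $R$-submodule of $M_0$, so $(M_0^G)_\mathfrak{p}$ embeds in $(M_0)_\mathfrak{p}$ for every prime $\mathfrak{p}$. The content is the reverse inclusion, which I would attack by localizing at a prime in $\mathrm{Supp}(M_0)$ and reducing to the classical fixed-point theorem for $p$-groups acting on $p$-groups.

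The key observation is that $G$-fixed points commute with localization. Writing $M_0^G$ as the kernel of the $R$-linear map $M_0 \to \prod_{g \in G} M_0$ sending $m$ to $(gm - m)_g$, and using that localization is exact (in particular preserves kernels), one obtains a natural isomorphism $(M_0^G)_\mathfrak{p} \cong ((M_0)_\mathfrak{p})^G$. Thus it suffices to prove the following elementary fact: if $N$ is a nonzero abelian group annihilated by some power of $p$ and equipped with an action of a finite $p$-group $G$, then $N^G \neq 0$.

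For this sublemma, I would first pass to the $p$-torsion subgroup $N[p]$, which is nonzero (since $N$ is a nonzero $p$-power torsion group), $G$-stable, and satisfies $N[p]^G \subset N^G$; this reduces to the case where $N$ is an $\mathbf{F}_p$-vector space with a $G$-action. Pick any nonzero $x \in N$; the $\mathbf{F}_p$-span of the finite orbit $Gx$ is a nonzero finite $G$-stable subgroup $B \subset N$. Since $G$ is a $p$-group, every orbit in $B$ has cardinality a power of $p$, so the orbit equation gives $|B^G| \equiv |B| \equiv 0 \pmod{p}$; combined with $0 \in B^G$, this forces $|B^G| \geq p$ and hence yields a nonzero element of $B^G \subset N^G$. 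Applied to $N = (M_0)_\mathfrak{p}$ for $\mathfrak{p} \in \mathrm{Supp}(M_0)$, this produces a nonzero element of $(M_0^G)_\mathfrak{p}$, completing the argument.

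I do not anticipate any genuine obstacle here: the argument is the classical fixed-point theorem for $p$-group actions on $p$-groups, combined with the commutation of $G$-fixed points with flat base change. The only sensitive bookkeeping is to present $M_0^G$ as a finite limit so that it is preserved by localization, for which the kernel description above suffices; no noetherian or finite-generation hypothesis on $M_0$ is actually used.
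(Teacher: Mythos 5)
Your proof is correct and follows essentially the same route as the paper: localize at a prime in the support, use that $(-)^G$ commutes with localization, and invoke the classical fact that a finite $p$-group acting on a nonzero $p$-power-torsion module has nonzero fixed points. The paper simply cites that last fact without proof, whereas you supply the standard orbit-counting argument; the substance is identical.
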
 
\begin{proof} 
Suppose $M_0$ is annihilated by $p^n$.
The support of $M_0$ is, by definition, those prime ideals $\mathfrak{p}
\subset R$ such that $(M_0)_{\mathfrak{p}} \neq 0$. 
So it suffices to show that if $(M_0)_{\mathfrak{p}} \neq 0$, then
$(M_0^{G})_{\mathfrak{p}} \neq 0$ too. But 
\[ (M_0^{G})_{\mathfrak{p}} = 
(M_0)_{\mathfrak{p}}^{G} \neq 0
\]
because $(M_0)_{\mathfrak{p}}$ is a nonzero $\mathbf{Z}/p^n$-module with a
$G$-action, and any such has nonzero fixed points.
\end{proof} 

\begin{corollaryAM} 
\label{lengthgoesdown}
Hypotheses as in Lemma \ref{fixedpointnonzero}, for any minimal prime ideal $\mathfrak{p} \in \supp
M$, we have $n_{\mathfrak{p}}(M_0/M_0^G) < n_{\mathfrak{p}}(M_0)$.
\end{corollaryAM}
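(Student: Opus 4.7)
The plan is to localize the short exact sequence
\[
0 \to M_0^G \to M_0 \to M_0/M_0^G \to 0
\]
at the minimal prime $\mathfrak{p} \in \supp M_0$ and exploit the additivity of length. Since localization is exact and commutes with taking $G$-invariants for a finite group (as $G$-invariants are a finite limit), the localized sequence reads
\[
0 \to (M_0)_{\mathfrak{p}}^G \to (M_0)_{\mathfrak{p}} \to (M_0/M_0^G)_{\mathfrak{p}} \to 0.
\]
Because $\mathfrak{p}$ is minimal in $\supp M_0$, the middle term $(M_0)_{\mathfrak{p}}$ has finite length over $R_{\mathfrak{p}}$, and so do both outer terms.

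Next I would invoke Lemma \ref{fixedpointnonzero}: since $(M_0)_{\mathfrak{p}} \neq 0$ and $M_0^G$ has the same support as $M_0$, we have $(M_0^G)_{\mathfrak{p}} = (M_0)_{\mathfrak{p}}^G \neq 0$. Hence its length is a positive integer. Additivity of length on the short exact sequence above then gives
\[
n_{\mathfrak{p}}(M_0) = \ell_{R_{\mathfrak{p}}}\bigl((M_0)_{\mathfrak{p}}^G\bigr) + n_{\mathfrak{p}}(M_0/M_0^G),
\]
with the first summand strictly positive, yielding $n_{\mathfrak{p}}(M_0/M_0^G) < n_{\mathfrak{p}}(M_0)$.

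There is essentially no obstacle here: everything has been reduced to the previous lemma plus additivity of length. The only small point to verify carefully is that $G$-invariants commute with localization at $\mathfrak{p}$, which holds since $G$ is finite, so $(-)^G$ is a finite limit and flat base change (localization) commutes with it. The quantity $n_{\mathfrak{p}}(M_0/M_0^G)$ is interpreted simply as $\ell_{R_{\mathfrak{p}}}((M_0/M_0^G)_{\mathfrak{p}})$, which is automatically finite as a quotient of a finite length module, and is zero if $\mathfrak{p}$ has left the support, consistent with the strict inequality.
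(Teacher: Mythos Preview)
Your argument is correct and is exactly the intended one: the paper states this corollary without proof, leaving implicit the standard deduction from the lemma via additivity of length on the localized short exact sequence. The only nontrivial input is that $(M_0^G)_{\mathfrak{p}} \neq 0$, which is precisely what Lemma~\ref{fixedpointnonzero} (and its proof, which already uses $(M_0^G)_{\mathfrak{p}} = (M_0)_{\mathfrak{p}}^G$) provides.
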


\begin{definitionAM} 
Let $R$ be a noetherian ring. 
Let $\coh(R)$
denote the $\infty$-category of $R$-modules $M$ such that: 
\begin{enumerate}
\item $\pi_i(M) \neq 0$ for $i \gg 0$ or $i \ll 0$. 
\item For each $i$, $\pi_i(M)$ is a finitely generated $R$-module.
\end{enumerate}
Given a prime number $p$, 
let $\coh^p(R)$ denote the subcategory of $\coh(R)$ spanned by those $M \in
\coh(R)$ such that, in addition, each of the homotopy groups $\pi_i(M)$ is $p$-power torsion. 
\end{definitionAM}

In general, $\coh(R) \subset \md(R)$ is a thick subcategory, but it is not
closed under tensor products. However, if $R$ is a \emph{regular} noetherian
ring of finite Krull dimension, then $\coh(R)$ is equal to
$\md^\omega(R)$ as a subcategory of $\md(R)$.

\begin{propositionAM} 
\label{reponptorsion}
Let $R$ be a noetherian ring  and let $G$ be a finite $p$-group.
Then the $\infty$-category $\fun(BG, \coh^p(R))$ is generated 
as a stable subcategory by the objects $A$ for $A \in
\coh^p(R)$ (given trivial $G$-action). 
\end{propositionAM}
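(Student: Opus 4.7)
The plan is to reduce to discrete objects via a Postnikov argument, and then carry out a nested noetherian induction driven by the length function $n_{\mathfrak{p}}(-)$ at minimal primes of the support.

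First I would reduce to the case where $M \in \fun(BG, \coh^p(R))$ is discrete. The Postnikov truncations $\tau_{\leq n}$ are formed pointwise on $BG$ and so preserve the $G$-action, yielding cofiber sequences in $\fun(BG, \coh^p(R))$
\[
\Sigma^{n}\pi_n(M) \longrightarrow \tau_{\leq n}M \longrightarrow \tau_{\leq n-1}M,
\]
where each $\pi_n(M)$ is viewed as a discrete $R$-module with $R$-linear $G$-action. Since $M$ has only finitely many nonzero homotopy groups, this exhibits $M$ as an iterated cofiber of shifts of the discrete pieces $\pi_n(M)$, so it suffices to prove the statement for a discrete finitely generated $p$-power torsion $R$-module $M_0$ with $R$-linear $G$-action.

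For the discrete case I would argue by a double induction. Set $Z := \supp(M_0) \subset \spec(R)$ and
\[
\nu(M_0) := \sum_{\mathfrak{p}\text{ minimal in }Z} n_{\mathfrak{p}}(M_0),
\]
and do noetherian induction on $Z$ with secondary induction on $\nu(M_0)$. If $M_0 = 0$ there is nothing to show; otherwise Lemma \ref{fixedpointnonzero} gives $M_0^G \neq 0$, and the canonical short exact sequence
\[
0 \to M_0^G \to M_0 \to N_0 \to 0, \qquad N_0 := M_0 / M_0^G,
\]
is a cofiber sequence in $\fun(BG, \coh^p(R))$. The subobject $M_0^G$ carries the trivial action, so it is already in the target subcategory; it remains to place $N_0$. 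If $\supp(N_0) \subsetneq Z$, the outer (noetherian) induction applies directly. If instead $\supp(N_0) = Z$, then the minimal primes of $\supp(N_0)$ coincide with those of $Z$, and Corollary \ref{lengthgoesdown} gives $n_{\mathfrak{p}}(N_0) < n_{\mathfrak{p}}(M_0)$ at each such $\mathfrak{p}$, so $\nu(N_0) < \nu(M_0)$ and the secondary induction applies. In either case $N_0$, and hence $M_0$, lies in the stable subcategory generated by trivial-$G$-action objects.

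The $p$-group hypothesis enters only through Lemma \ref{fixedpointnonzero}. The main point requiring care — and where I expect the bookkeeping to live — is setting up the two nested inductions cleanly so that the dichotomy ``support strictly drops, or length at every fixed minimal prime strictly drops'' produces honest termination; Corollary \ref{lengthgoesdown} is exactly the input that makes the second alternative work.
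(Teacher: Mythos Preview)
Your argument is correct and follows essentially the same route as the paper: reduce to discrete $p$-power torsion modules via Postnikov truncation, then run noetherian induction on the support together with a secondary induction governed by $\sum_{\mathfrak{p}} n_{\mathfrak{p}}(M_0)$, using the fixed-point inclusion $M_0^G \hookrightarrow M_0$ and Corollary~\ref{lengthgoesdown} to drive the descent. The only cosmetic difference is that the paper fixes the minimal primes of $Z$ once and phrases the secondary hypothesis for all $N$ with $\supp(N)\subset Z$ (so your dichotomy ``support drops vs.\ $\nu$ drops'' is absorbed into a single induction), but the content is identical.
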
 

\begin{proof} 
Let $\mathcal{C} \subset \fun(BG, \coh^p(R))$ be the \emph{stable} subcategory generated
by the trivial actions on objects in $\coh^p(R)$. 

Suppose $M \in 
\fun(BG, \coh^p(R))$. 
To show that $M$ belongs to $\mathcal{C}$, we will use noetherian induction on
$\supp \pi_*(M) \subset \spec R$. 
If $\pi_*(M) =0$, there is nothing to prove.
We make the following inductive hypothesis for 
$Z \subset \spec R$ a  given closed subset.
 If $M'$ is an $R$-module with
$G$-action, all of whose homotopy groups 
are supported on proper subsets of $Z$, we will assume that  then $M'$ belongs
to $\mathcal{C}$.

Now suppose $M \in \fun(BG, \coh^p(R))$ with $\supp \pi_*(M) \subset Z$; we want
to show that $M \in \mathcal{C}$.
Since we can write $M$ as an iterated extension of
objects in $\fun(BG, \coh^p(R))$ each of which is a shift of a discrete $R$-module with
$G$-action, we may assume that $M$ itself arises from a $p$-power
torsion discrete $R$-module
$M_0 = \pi_0(M)$ with $G$-action; we will (by a slight abuse of notation) identify
$M_0$ and $M$. 
Moreover, $\supp M_0 \subset Z$. 

Now let $\mathfrak{p}_1, \dots, \mathfrak{p}_n$ be the minimal primes of $Z$. 
Using another induction, we may assume that any $N \in \fun(BG, \coh^p(R))$ such
that $N$ is discrete with $\supp \pi_0(N) \subset Z$ and $\sum_{i=1}^n
n_{\mathfrak{p}_i}(\pi_0(N))  < 
\sum_{i=1}^n
n_{\mathfrak{p}_i}(\pi_0(M))$ belongs to $\mathcal{C}$.

We know that the fixed point module $M_0^{G} \subset M_0$, considered as an object of $\coh^p(R)$
with trivial $G$-action, belongs to the thick subcategory (it is one of the
generators),
and it maps to $M$. Now we can
consider the quotient $M_0/M_0^G$. By Corollary \ref{lengthgoesdown} and the second inductive hypothesis, we 
know that $M_0/M_0^G \in \mathcal{C}$.
Putting everything together, we get that $M_0 \in \mathcal{C}$. 
\end{proof}

\begin{corollaryAM} 
Let $R$ be a noetherian ring  and let $G$ be a finite group.
Let $G_p \subset G$ be a $p$-Sylow subgroup of $G$.
Then the $\infty$-category $\fun(BG, \coh^p(R))$ is generated 
as a thick subcategory by the objects $A \wedge (G/G_p)_+$ for $A \in
\coh^p(R)$. 
\label{reponptorsion2}
\end{corollaryAM}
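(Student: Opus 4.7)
The plan is to combine Proposition~\ref{reponptorsion} (the $p$-group case) with the standard transfer argument for the inclusion $G_p \subset G$.

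Given $M \in \fun(BG, \coh^p(R))$, first restrict to obtain $\Res^G_{G_p}(M) \in \fun(BG_p, \coh^p(R))$. As $G_p$ is a $p$-group, Proposition~\ref{reponptorsion} places $\Res^G_{G_p}(M)$ in the stable, hence thick, subcategory generated by objects $A \in \coh^p(R)$ with trivial $G_p$-action. Applying the exact induction functor $\Ind_{G_p}^G$ then shows that $\Ind_{G_p}^G \Res^G_{G_p}(M)$ belongs to the thick subcategory of $\fun(BG, \coh^p(R))$ generated by the objects $\Ind_{G_p}^G A \simeq A \wedge (G/G_p)_+$, where $A$ ranges over $\coh^p(R)$.

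It then suffices to exhibit $M$ as a retract of $\Ind_{G_p}^G \Res^G_{G_p}(M)$. Because $G_p$ has finite index in $G$, the functor $\Ind_{G_p}^G$ is both left and right adjoint to $\Res^G_{G_p}$. Let $\eta \colon M \to \Ind_{G_p}^G \Res^G_{G_p}(M)$ denote the unit of the $\Res \dashv \Ind$ adjunction and $\varepsilon \colon \Ind_{G_p}^G \Res^G_{G_p}(M) \to M$ the counit of the $\Ind \dashv \Res$ adjunction. A standard computation identifies $\varepsilon \circ \eta$ with multiplication by $[G:G_p]$ on $M$. Since $[G:G_p]$ is prime to $p$ and every homotopy group of $M$ is $p$-power torsion, this composite acts invertibly on $\pi_\ast(M)$; in $\coh^p(R)$ an endomorphism that is an isomorphism on homotopy groups is itself an equivalence, and rescaling by its inverse produces the required splitting. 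Combined with the previous paragraph, this exhibits $M$ as belonging to the thick subcategory generated by $\{A \wedge (G/G_p)_+ : A \in \coh^p(R)\}$.

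The main obstacle is verifying the transfer identity $\varepsilon \circ \eta = [G:G_p]$ in the $\infty$-categorical Borel setting. This is classical folklore: after identifying $\Ind_{G_p}^G \Res^G_{G_p}(M) \simeq M \wedge (G/G_p)_+$, the map $\eta$ is induced by the diagonal inclusion $\ast \to (G/G_p)_+ \wedge (G/G_p)_+$ composed with the projection onto one factor, and $\varepsilon$ is induced by the augmentation $(G/G_p)_+ \to S^0$; summing over the cosets of $G/G_p$ gives exactly multiplication by $[G:G_p]$. With this identity in hand, the rest of the argument is formal.
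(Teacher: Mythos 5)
Your proof is correct and follows essentially the same route as the paper: exhibit $M$ as a retract of $\mathrm{Ind}_{G_p}^G \mathrm{Res}_{G_p}^G(M) \simeq M \otimes_R R[G/G_p]$ via the transfer, using that $[G:G_p]$ is prime to $p$ and hence acts invertibly on the $p$-power torsion homotopy groups, and then invoke Proposition~\ref{reponptorsion} for the $p$-group $G_p$. The only cosmetic difference is that the paper phrases the retraction via the composite $R \xrightarrow{\mathrm{tr}} R[G/G_p] \to R$ tensored with $M$ (the projection formula), whereas you phrase it via the unit and counit of the induction--restriction adjunctions; these are the same argument.
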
 
\begin{proof} 
Fix $M \in \fun(BG, \coh^p(R))$.
One sees easily, as in the proof of Proposition \ref{rationalcase}, that $M$ is a retract of $\mathrm{Ind}_{G_p}^G
\mathrm{Res}_{G_p}^G (M) = M \otimes_R R[G/G_p]$, 
as the composite
\[ R \stackrel{\mathrm{tr}}{\to}R[G/G_p]  \to R, \]
for the second map induced by the projection $G/G_p \to \ast$ and the first map
its dual (the \emph{transfer}) is given by multiplication by $[G: G_p]$.
So, one reduces to the case where $G$ is a $p$-group.  
In this case, one can use Proposition \ref{reponptorsion}.
\end{proof}

\begin{proof}[Proof of Theorem \ref{generatebypermutation_regular}] 
Let $\mathcal{D} \subset \rw{G}{R}$ denote the thick subcategory generated by
the permutation objects. 

Suppose $M \in \rw{G}{R}$. We want to prove that $M \in \mathcal{D}$. 
Without loss of generality, we can assume that $M$ is discrete. We let $M_0 =
\pi_0(M)$ be the associated discrete $R$-module with $G$-action. 
In this case, we know that the $G$-representation $M_0 \otimes \mathbf{Q}$ is a direct summand of an
induced object $\mathrm{Ind}_1^G \mathrm{Res}_1^G( M_0\otimes \mathbf{Q}) $. 
In particular, we have a (discrete) finitely generated $(R \otimes
\mathbf{Q})[G]$-module $V_0$ such that 
we have an equivalence of $(R \otimes \mathbf{Q})[G]$-modules
\[ M_0 \otimes \mathbf{Q}  \oplus V_0 \simeq  \mathrm{Ind}_1^G
\mathrm{Res}_1^G( M_0\otimes \mathbf{Q}). \]
Now choose a $G$-stable $R$-submodule $W_0 \subset V_0$ such that $W_0 \otimes
\mathbf{Q} =V_0$ and such that $W_0$ is a finitely generated $R$-module.

Now $N_0 \stackrel{\mathrm{def}}{=}M_0 \oplus W_0$ defines an object in $\rw{G}{R}$; by definition
of a thick subcategory, it suffices to
show that this object belongs to $\mathcal{D}$. 
However, we know that the $(R \otimes \mathbf{Q})[G]$-module $N_0 \otimes
\mathbf{Q} $ is a induced module $ \mathrm{Ind}_1^G( M_0 \otimes \mathbf{Q})$
which contains the induced $R[G]$-module $\mathrm{Ind}_1^G(M_0/\mathrm{tors})$. 

Multiplying by a highly divisible integer if necessary, we may assume that
we have an inclusion $N_0 
 \subset \mathrm{Ind}_1^G(M_0/\mathrm{tors})$. 
Since 
$ \mathrm{Ind}_1^G(M_0/\mathrm{tors}) \in \mathcal{D}$, it suffices to show that the quotient
(or rather, the object in $\rw{G}{R}$ defined by it) $X =
\mathrm{Ind}_1^G(M_0/\mathrm{tors})/
N_0$ belongs to $\mathcal{D}$. This object $X$, however, is torsion. 
It therefore decomposes as a \emph{finite} direct sum, 
\[ X = \bigoplus_{p} X_p,  \]
where each $X_p \in \rw{G}{R}$ is $p$-power torsion. We can now use
Corollary \ref{reponptorsion2} to conclude that each $X_p\in \mathcal{D}$, so that $X
\in \mathcal{D}$ too.

\end{proof}

\subsection{The main result}

The main theorem of this appendix states: \begin{theoremAM} 
\label{KUZ/pgenerators}
Let $\Kp$ denote $p$-adically completed complex $K$-theory.
The smallest stable subcategory 
of $\rwk{\mathbf{Z}/p}$ generated by $\Kp$ and $\Kp[\mathbf{Z}/p]$ is all of
$\rwk{\mathbf{Z}/p}$.
\end{theoremAM}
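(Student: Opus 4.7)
My plan is to reduce Theorem \ref{KUZ/pgenerators} to the $\bZ_p$ case of Theorem \ref{generatebypermutation_regular} by combining the Heller--Reiner classification of $\bZ_p[\mathbf{Z}/p]$-lattices with an Adams spectral sequence / Tate-vanishing argument that lifts facts about $\bZ_p[\mathbf{Z}/p]$-modules to facts about $\KU_p[\mathbf{Z}/p]$-modules. Write $\mathcal{T} \subset \rwk{\mathbf{Z}/p}$ for the thick subcategory generated by $\KU_p$ and $\KU_p[\mathbf{Z}/p]$. Two additional useful objects already lie in $\mathcal{T}$: the even realization $I^{\KU} := \mathrm{fib}(\KU_p[\mathbf{Z}/p] \to \KU_p)$ of the augmentation ideal (as the fiber of the augmentation map), and the mod-$p$ realization $\mathbf{F}_p^{\KU} := \mathrm{cofib}(p \colon \KU_p \to \KU_p)$ of the trivial $\mathbf{F}_p[\mathbf{Z}/p]$-module.

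The first step is to build an even realization in $\mathcal{T}$ of every finitely generated $\bZ_p[\mathbf{Z}/p]$-module $A$ --- i.e.\ some $A^{\KU} \in \mathcal{T}$ with $\pi_0 A^{\KU} = A$ and $\pi_1 A^{\KU} = 0$. By Heller--Reiner, any torsion-free $A$ is a direct sum of $\bZ_p$, $\bZ_p[\mathbf{Z}/p]$, and $I$, all of which are handled above. For finite $p$-torsion $A$, since $\mathbf{F}_p[\mathbf{Z}/p]$ is local with unique simple module $\mathbf{F}_p$, $A$ is an iterated extension of copies of $\mathbf{F}_p$; each extension lifts to a cofiber sequence in $\mathcal{T}$ by choosing a class in the appropriate Adams $E_2$-page, and the general case then follows from the torsion/torsion-free short exact sequence. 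The second step handles a general $M \in \rwk{\mathbf{Z}/p}$ by imitating Proposition \ref{thm:coefficients}: choose a $\pi_*$-surjection $N := \KU_p[\mathbf{Z}/p]^{\oplus n_0} \oplus \Sigma \KU_p[\mathbf{Z}/p]^{\oplus n_1} \twoheadrightarrow M$ using $\bZ_p[\mathbf{Z}/p]$-generators of $\pi_0 M$ and $\pi_1 M$, and let $K = \mathrm{fib}(N \to M)$. The long exact sequence exhibits $\pi_0 K$ and $\pi_1 K$ as $\bZ_p[\mathbf{Z}/p]$-submodules of a free $\bZ_p[\mathbf{Z}/p]$-module, hence torsion-free, hence by Heller--Reiner direct sums of $\bZ_p$, $\bZ_p[\mathbf{Z}/p]$, $I$. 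Consequently, if $K$ itself lies in $\mathcal{T}$ then $M \simeq \mathrm{cofib}(K \to N) \in \mathcal{T}$.

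The crux of the argument, and the main obstacle, is to show $K \in \mathcal{T}$. The naive strategy of iterating the cofiber reduction fails to terminate because the syzygies of lattice modules are periodic ($\Omega \bZ_p = I$, $\Omega I = \bZ_p$ in $\bZ_p[\mathbf{Z}/p]$-modules), so $\pi_*$ does not become ``smaller'' in any size-theoretic sense. I plan to finesse this by invoking the Greenlees--Sadofsky vanishing of the Tate construction, which for $K(1)$-local $\KU_p$-modules identifies $M^{hG}$ with $L_{K(1)} M_{hG}$ and collapses the Adams spectral sequence enough to force the uniqueness of a $\KU_p[\mathbf{Z}/p]$-module lift of lattice-type $\pi_*$-data --- so that $K$ must coincide with the even realization built in Step 1. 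A more direct route, avoiding uniqueness, would be to use Proposition \ref{thm:coefficients} together with the torsion-freeness of $\pi_* K$ to construct a splitting $K \simeq (\pi_0 K)^{\KU} \oplus \Sigma (\pi_1 K)^{\KU}$ in $\rwk{\mathbf{Z}/p}$, the obstruction to which lives in an $\mathrm{Ext}^1_{\bZ_p[\mathbf{Z}/p]}$ between lattice pieces that one verifies vanishes (or is absorbed) by the specific structure of $\bZ_p[\mathbf{Z}/p]$.
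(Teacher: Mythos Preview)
Your reduction in Step~2 --- replacing an arbitrary $M$ by the fiber $K$ of a surjection from a sum of copies of $\KU_p[\mathbf{Z}/p]$, so that $\pi_*K$ is torsion-free and hence (by Heller--Reiner) a sum of copies of $V_1, V_2, V_3$ --- is exactly the paper's first move.  The gap is in the ``crux'': neither of the two mechanisms you propose for showing $K \in \mathcal{T}$ actually works as stated.

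Option~B is simply false.  The obstruction to producing an equivariant map $(\pi_0 K)^{\KU} \to K$ inducing the identity on $\pi_0$ is \emph{not} a single class in $\Ext^1_{\bZ_p[\mathbf{Z}/p]}$ between lattice pieces; it is the survival of $\mathrm{id}_{\pi_0 K} \in E_2^{0,0}$ through all differentials of the Adams/HFPSS, and the targets of those differentials are governed by $\Ext^s$ (equivalently $H^s(\mathbf{Z}/p;-)$) for $s \geq 2$.  These groups do not vanish: for instance $H^{2k}(\mathbf{Z}/p; V_2) \cong \mathbf{F}_p$ and $H^{2k+1}(\mathbf{Z}/p; V_3) \cong \mathbf{F}_p$ for all $k \geq 1$, so there is no degree-reason collapse, and one can write down torsion-free $K$ for which the relevant differential is nonzero.  (The same defect undermines the torsion part of your Step~1: the claim that an extension class in $\Ext^1$ automatically lifts to a map in $\rwk{\mathbf{Z}/p}$ presupposes exactly the survival you have not established.)

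Option~A is too vague to assess, but Greenlees--Sadofsky does not do what you suggest.  Tate vanishing identifies $M^{hG}$ with $L_{K(1)} M_{hG}$; in the paper it is used only to guarantee that $M^{hG}$ has finite type.  It does not collapse the HFPSS and does not give uniqueness of lifts of lattice-type $\pi_*$-data.  Indeed the paper's Proposition~\ref{analyszeHFPSS} is devoted precisely to analyzing the \emph{possible nonzero} differentials in the HFPSS for such $M$, so uniqueness cannot be taken for granted.

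What the paper actually does after your Step~2 is make two further reductions before attempting to lift anything.  First, map in copies of $\Sigma\KU_p[\mathbf{Z}/p]$ to kill $\pi_1$, obtaining $C$ with $\pi_1 C = 0$ and $\pi_0 C$ torsion-free.  Second, map in copies of $\Sigma\KU_p$ to kill $\pi_1(C^{h\mathbf{Z}/p})$, obtaining $C'$ with $\pi_1 C' = 0$, $\pi_0 C'$ torsion-free, and $\pi_1((C')^{h\mathbf{Z}/p}) = 0$.  Only under this last hypothesis does the HFPSS analysis (Proposition~\ref{analyszeHFPSS}, Lemma~\ref{pi1zero}) force the edge map $\pi_0((C')^{h\mathbf{Z}/p}) \to (\pi_0 C')^{\mathbf{Z}/p}$ to be surjective: a nontrivial $d_r$ out of $E_r^{0,0}$ would create permanent torsion cycles in filtration degree contributing to $\pi_{-1}$, contradicting the hypothesis.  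Once that surjectivity is in hand, one maps in copies of $\KU_p$ hitting all the $V_2$-summands; the resulting cofiber has $\pi_0$ a sum of $V_1$'s and $V_3$'s, and that case (Lemma~\ref{somethingsC'}) is handled directly because every generator can be hit from $\KU_p[\mathbf{Z}/p]$.  The missing idea in your proposal is this extra reduction to $\pi_1(M^{h\mathbf{Z}/p}) = 0$ together with the structural HFPSS argument that exploits it.
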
 

We note Theorem \ref{KUZ/pgenerators} does not even require thick subcategories.


Our proof of Theorem \ref{KUZ/pgenerators} is somewhat ad hoc: it relies on an
analysis of the possible homotopy fixed point spectral sequences converging to
$\pi_*(M^{h\mathbf{Z}/p})$ for $M \in \rwk{\mathbf{Z}/p}$, together with the
(known)
purely algebraic classification of torsion-free representations of $\mathbf{Z}/p$ over the $p$-adics
$\mathbf{Z}_p$.

We will break through proof of 
Theorem \ref{KUZ/pgenerators} into a series of steps. First, we begin by recalling the
purely algebraic ingredient that we need.

\begin{theoremAM}[{\cite[Th. 2.6]{HeRe62}}] 
\label{algclassZ/p}
Let $V$ be a (discrete) module over the group ring $\mathbf{Z}_p[\mathbf{Z}/p]$
whose underlying $\mathbf{Z}_p$-module is finitely generated and free. Then $V$
is a finite direct sum of the $\mathbf{Z}_p[\mathbf{Z}/p]$-modules $V_1, V_2,
V_3$ where $V_1 = \mathbf{Z}_p[\mathbf{Z}/p]$ is the induced representation,
$V_2 = \mathbf{Z}_p$ is the trivial representation, and $V_3 =
V_1/V_1^{\mathbf{Z}/p}$ is the natural quotient $V_1/V_2$.
\end{theoremAM}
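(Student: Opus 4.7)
Let $x$ generate $\mathbf{Z}/p$ and let $N = 1 + x + \cdots + x^{p-1}$ be the norm element, so that $(x-1)N = x^p - 1 = 0$ in $\mathbf{Z}_p[\mathbf{Z}/p]$. The plan is to exhibit $V$ as an extension whose outer terms are direct sums of $V_2$'s and $V_3$'s respectively, and then to classify the extension up to isomorphism using the action of the automorphism groups of the two sides.

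First I would analyze the exact sequence
\begin{equation*}
0 \to V^{\mathbf{Z}/p} \to V \to V/V^{\mathbf{Z}/p} \to 0.
\end{equation*}
The invariants $V^{\mathbf{Z}/p} = \ker(x-1)$ are $\mathbf{Z}_p$-free as a submodule of a free module over the PID $\mathbf{Z}_p$, and $\mathbf{Z}/p$ acts trivially, so $V^{\mathbf{Z}/p} \cong V_2^b$ for some $b \geq 0$. For the quotient, if $p[v] = 0$ in $V/V^{\mathbf{Z}/p}$ then $pv \in V^{\mathbf{Z}/p}$, giving $p(x-1)v = 0$; the $\mathbf{Z}_p$-freeness of $V$ forces $v \in V^{\mathbf{Z}/p}$, so $V/V^{\mathbf{Z}/p}$ is also torsion-free. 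The relation $(x-1)N = 0$ shows $Nv \in V^{\mathbf{Z}/p}$ for every $v \in V$, so $N$ acts by zero on $V/V^{\mathbf{Z}/p}$; hence this quotient is a finitely generated torsion-free module over $\mathbf{Z}_p[\mathbf{Z}/p]/(N) \cong \mathbf{Z}_p[x]/\Phi_p(x) \cong \mathbf{Z}_p[\zeta_p]$. Since $\mathbf{Z}_p[\zeta_p]$ is a discrete valuation ring, such a module is free; identifying $\mathbf{Z}_p[\zeta_p] \cong V_3$ as $\mathbf{Z}_p[\mathbf{Z}/p]$-modules gives $V/V^{\mathbf{Z}/p} \cong V_3^c$ for some $c \geq 0$.

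Second I would classify the extension. Applying $\mathrm{Hom}(-, V_2)$ to the fundamental short exact sequence $0 \to V_2 \to V_1 \to V_3 \to 0$ (using projectivity of $V_1$ and $\mathrm{Hom}(V_3, V_2) = 0$, since any such homomorphism factors through $V_3/(x-1)V_3 \cong \mathbf{F}_p$), the connecting map is identified with multiplication by $p$, giving
\begin{equation*}
\mathrm{Ext}^1_{\mathbf{Z}_p[\mathbf{Z}/p]}(V_3, V_2) \cong \mathbf{Z}_p/p\mathbf{Z}_p \cong \mathbf{F}_p,
\end{equation*}
generated by the class of $V_1$. Hence $\mathrm{Ext}^1(V_3^c, V_2^b) \cong \mathrm{Mat}_{b \times c}(\mathbf{F}_p)$. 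The automorphism groups $\mathrm{Aut}(V_2^b) = \mathrm{GL}_b(\mathbf{Z}_p)$ and $\mathrm{Aut}(V_3^c) = \mathrm{GL}_c(\mathbf{Z}_p[\zeta_p])$ act on $\mathrm{Ext}^1$ through the surjective reductions to $\mathrm{GL}_b(\mathbf{F}_p)$ and $\mathrm{GL}_c(\mathbf{F}_p)$, and these act on $\mathrm{Mat}_{b \times c}(\mathbf{F}_p)$ by left and right multiplication. Standard Gaussian elimination over $\mathbf{F}_p$ brings the classifying matrix to block form $\begin{pmatrix} I_r & 0 \\ 0 & 0 \end{pmatrix}$ for some $r$; this exhibits $V$ as the direct sum of $r$ copies of the fundamental extension $V_1$, together with $b-r$ trivially split $V_2$ summands and $c-r$ trivially split $V_3$ summands, yielding $V \cong V_1^r \oplus V_2^{b-r} \oplus V_3^{c-r}$.

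The main step is the $\mathrm{Ext}^1$ computation together with the surjectivity of the reductions $\mathrm{Aut}(V_i^n) \twoheadrightarrow \mathrm{GL}_n(\mathbf{F}_p)$ for $i = 2, 3$; once these are in place, the passage to block form is routine Gaussian elimination and the classification follows immediately.
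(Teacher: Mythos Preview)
The paper does not prove this statement: it is quoted from \cite{HeRe62} and used as a black box in the appendix. So there is no proof in the paper to compare against.

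Your argument is correct and is essentially the standard one. The identification $V/V^{\mathbf{Z}/p}\cong V_3^c$ via freeness over the DVR $\mathbf{Z}_p[\zeta_p]$ is clean, and the $\mathrm{Ext}^1$ computation is right: the map $\mathrm{Hom}(V_1,V_2)\to\mathrm{Hom}(V_2,V_2)$ is multiplication by $p$ because the inclusion $V_2\hookrightarrow V_1$ sends $1$ to the norm element $N$, and any equivariant map $V_1\to V_2$ sends $N\mapsto p\cdot(\text{image of }1)$. The one point worth making explicit is why $\mathrm{Aut}(V_3^c)=\mathrm{GL}_c(\mathbf{Z}_p[\zeta_p])$ acts on $\mathrm{Ext}^1(V_3^c,V_2^b)$ through $\mathrm{GL}_c(\mathbf{F}_p)$: this is because $\mathrm{End}(V_3)=\mathbf{Z}_p[\zeta_p]$ acts on the one-dimensional $\mathbf{F}_p$-vector space $\mathrm{Ext}^1(V_3,V_2)$ via the residue map $\mathbf{Z}_p[\zeta_p]\to\mathbf{Z}_p[\zeta_p]/(\zeta_p-1)\cong\mathbf{F}_p$ (the extension being totally ramified). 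With that in hand, the Smith-normal-form step over $\mathbf{F}_p$ finishes it.
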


Let $M \in \rwk{\mathbf{Z}/p}$ be such that $\pi_*(M)$ is torsion-free. Then
$\pi_*(M)$ is a $\mathbf{Z}_p[\mathbf{Z}/p]$-module. 
By 
Theorem \ref{algclassZ/p}, $\pi_0(M)$ and $\pi_1(M)$ are direct sums of the indecomposable building
blocks  $V_1, V_2, V_3$. 
Our goal is to analyze the possible homotopy fixed point spectral sequences for
$\pi_*(M^{h \mathbf{Z}/p})$. 

To begin with, we write down the $\mathbf{Z}/p$-cohomology 
of these objects as modules over $R_* \stackrel{\mathrm{def}}{=} H^*(\mathbf{Z}/p; \mathbf{Z}_p) \simeq
\mathbf{Z}_p[x_2]/(px_2)$.
\begin{propositionAM} 
\label{cohomologycalc}
As graded modules over $R_*$, one has: 
\begin{itemize}
\item  $H^*(\mathbf{Z}/p; V_1) \simeq \mathbf{Z}_p \simeq R_*/(x_2)$
(it begins in degree zero).
\item $H^*(\mathbf{Z}/p; V_2) \simeq R_*$.
\item $H^*(\mathbf{Z}/p; V_3) \simeq R_*\left\{y_1\right\}/(py_1)$ with $|y_1|
= 1$.
\end{itemize}
\end{propositionAM}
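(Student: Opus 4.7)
The plan is to treat the three indecomposable modules $V_1, V_2, V_3$ separately, deducing the computation for $V_3$ from those for $V_1$ and $V_2$ via the defining short exact sequence $0 \to V_2 \to V_1 \to V_3 \to 0$.

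The statement for $V_2 = \mathbf{Z}_p$ is a restatement of the definition of $R_*$. For $V_1 = \mathbf{Z}_p[\mathbf{Z}/p] = \mathrm{Ind}_1^{\mathbf{Z}/p} \mathbf{Z}_p$, I would invoke Shapiro's lemma to obtain $H^*(\mathbf{Z}/p; V_1) \cong H^*(\{1\}; \mathbf{Z}_p) = \mathbf{Z}_p$, concentrated in degree $0$. Since this module is concentrated in degree $0$ while $x_2$ has positive degree, $x_2$ necessarily acts as zero, so $H^*(\mathbf{Z}/p; V_1) \cong R_*/(x_2)$ as graded $R_*$-modules, as claimed.

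For $V_3$, I would feed the short exact sequence into the long exact sequence of group cohomology. Because the inclusion $V_2 \hookrightarrow V_1$ identifies $V_2$ with $V_1^{\mathbf{Z}/p}$, the induced map $H^0(V_2) \to H^0(V_1)$ is the identity on $\mathbf{Z}_p$, while by the previous step $H^n(V_1) = 0$ for all $n \geq 1$. The long exact sequence therefore collapses to give $H^0(V_3) = 0$ together with connecting isomorphisms $\delta \colon H^n(V_3) \xrightarrow{\sim} H^{n+1}(V_2)$ for every $n \geq 1$. These $\delta$'s are $R_*$-linear because cup product with a class in $R_* = H^*(\mathbf{Z}/p; \mathbf{Z}_p)$ is natural with respect to connecting maps arising from short exact sequences of coefficient modules. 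Letting $y_1 \in H^1(V_3)$ denote the preimage of $x_2 \in H^2(V_2)$ under $\delta$, $R_*$-linearity gives $\delta(x_2^k y_1) = x_2^{k+1}$ for all $k \geq 0$; hence each $x_2^k y_1$ generates $H^{2k+1}(V_3) \cong \mathbf{F}_p$, and the relation $p y_1 = 0$ follows from $p x_2 = 0$, producing the presentation $R_*\{y_1\}/(p y_1)$.

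None of the individual steps looks especially difficult; the only point demanding a moment's care is recording that the connecting map is $R_*$-linear, which is just the naturality of the cup product pairing with respect to short exact sequences of coefficients.
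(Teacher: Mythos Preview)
Your argument is correct and matches the paper's approach exactly: the paper simply remarks that the result is well-known and that the third calculation follows from the first two, and your use of Shapiro's lemma for $V_1$ together with the long exact sequence for $V_3$ is precisely the intended (standard) derivation.
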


Proposition \ref{cohomologycalc} is well-known, and the third calculation 
follows from the first two. 

Consider now the HFPSS for $M \in \rwk{\mathbf{Z}/p}$, which runs 
\begin{equation} \label{HFPSS} H^s( \mathbf{Z}/p; \pi_t(M)) \implies \pi_{t-s} (M
^{h\mathbf{Z}/p}).  \end{equation}
The $E_2$-term is 2-periodic because $\Kp$ is 2-periodic, and it is a module
over the spectral sequence for $\Kp$ with trivial action.
As a result, we obtain: 
\begin{propositionAM} 
The differentials in the HFPSS \eqref{HFPSS} are $R_*$-linear. 
\end{propositionAM}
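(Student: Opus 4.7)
The plan is to exhibit the homotopy fixed point spectral sequence for $M$ as a module over the HFPSS for $\Kp$ with trivial $\mathbf{Z}/p$-action, and then use that the generators of $R_*$ lift to permanent cycles in the latter.

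First I would set up the module structure. The HFPSS \eqref{HFPSS} is the spectral sequence associated to a canonical tower of spectra computing $M^{h\mathbf{Z}/p}$ — for instance, the tower of partial totalizations of the Bousfield–Kan cosimplicial object $\{M^{\bullet}\}$, or equivalently the tower coming from the skeletal filtration of $B\mathbf{Z}/p$. The multiplication $\Kp \wedge_{\Kp} M \to M$, viewed as a map of $\Kp[\mathbf{Z}/p]$-modules with $\Kp$ carrying the trivial action, is natural in $M$ and therefore lifts to a map of towers. Because $\Mod(\Kp)$ is symmetric monoidal and the formation of these towers is lax symmetric monoidal, one obtains a pairing of filtered spectra inducing a pairing of spectral sequences. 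On the $E_2$-page this pairing recovers the evident $R_*$-action on $H^*(\mathbf{Z}/p;\pi_*(M))$, where $R_*$ is by definition the $E_2$-page of the HFPSS for $\Kp$ with trivial action.

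Next I would invoke the standard Leibniz rule for such a module structure on a spectral sequence: for $a$ a class in the HFPSS for $\Kp^{h\mathbf{Z}/p}$ and $x$ a class in the HFPSS for $M^{h\mathbf{Z}/p}$, one has
\[
d_r(ax) = d_r(a)\,x + (-1)^{|a|} a\, d_r(x).
\]
Consequently, to deduce $R_*$-linearity of every $d_r$ on the HFPSS for $M$, it suffices to show that the generators $1 \in R_0$ and $x_2 \in R_2$ are permanent cycles in the HFPSS for $\Kp$ (with trivial action). The class $1$ is a permanent cycle for formal reasons. For $x_2$, one appeals to Atiyah's theorem: $\Kp^*(B\mathbf{Z}/p)$ is concentrated in even degrees and is a formal-power-series ring over $\mathbf{Z}_p$, so there is a class in $\pi_{-2}(\Kp^{h\mathbf{Z}/p}) = \Kp^2(B\mathbf{Z}/p)$ detecting (a unit multiple of) $x_2$ on $E_\infty$. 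This provides an explicit permanent cycle representing $x_2$.

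Combining the two steps, the Leibniz rule reduces to $d_r(ax) = a\, d_r(x)$ for $a \in R_*$, which is exactly $R_*$-linearity. The main bookkeeping issue I anticipate is establishing the pairing of towers at the level of filtered spectra carefully enough to get a genuine multiplicative spectral sequence with the Leibniz formula — in $\infty$-categorical language this is automatic from the lax symmetric monoidal structure on limits, but it is the only point requiring real care.
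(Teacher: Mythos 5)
Your proposal is correct and follows essentially the same route as the paper: the paper's one-line proof is precisely that the HFPSS for $M$ is a module over the HFPSS for $\Kp$ with trivial action, which degenerates at $E_2$, so the Leibniz rule gives $R_*$-linearity. Your extra detail (the lax monoidal pairing of towers, and the permanent-cycle status of $x_2$ via the evenness of $\Kp^*(B\mathbf{Z}/p)$) just fleshes out the degeneration claim — which also follows immediately from sparsity, since the $E_2$-page for the trivial action is concentrated in even bidegrees.
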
 
\begin{proof} 
This follows from the fact that \eqref{HFPSS} is a module over the spectral
sequence for $\Kp$ with trivial action, which degenerates at $E_2$.
\end{proof}

We now make a key observation about the structure of the HFPSS in case
$\pi_*(M)$ is torsion-free.
We will regard the $E_r^{\ast, \ast}$ pages of the HFPSS as graded
$R_*$-modules, where the grading we  use on $E_r^{\ast, \ast}$ is the
cohomological (vertical) one. 
Before doing so, we need to recall a basic construction for producing permanent
cycles in the HFPSS.

\begin{consAM}
Let $ M \in \rwk{\mathbf{Z}/p}$. 
Then there is a natural map, called the \emph{norm} map 
\[ M \to M^{h \mathbf{Z}/p} , \]
such that the composite 
$M \to M^{h\mathbf{Z}/p} \to M$ is given by multiplication by $\Sigma_{g \in
\mathbf{Z}/p} g$.  
\end{consAM}

In particular, it follows that any element in $\pi_*(M)^{\mathbf{Z}/p}$ (which
is the $E_2^{0, \ast}$ of the HFPSS) that is a norm from $\pi_*(M)$ is a
permanent cycle in the HFPSS, as it lifts to an element of $\pi_*(M^{h
\mathbf{Z}/p})$.

\begin{propositionAM} 
\label{analyszeHFPSS}
Consider the HFPSS \eqref{HFPSS} for $M \in \rwk{\mathbf{Z}/p}$ with 
$\pi_*(M)$ torsion-free. View the $E_r$-term as a graded module over $R_*  =
H^*(\mathbf{Z}/p; \mathbf{Z}_p)$. 
Then 
one has a (graded) decomposition of 
\[ E_r \simeq  E_r^{\mathrm{tors}} + E_r^{\mathrm{free}}  + P_r,  \]
where: 
\begin{enumerate}
\item $E_r^{\mathrm{tors}} $ is the submodule of elements which are
$x_2$-power torsion. 
\item 
$E_r^{\mathrm{free}}$ is a  sum of copies of $R_*$,  and $R_{\ast - 1}/(p)$ (as a
graded module; the generators are in degree zero and one, respectively).
\item $P_r$ is a direct sum of copies of $\mathbf{Z}_p$ concentrated in degree
zero.
\end{enumerate}
Moreover, one has: 
\begin{enumerate}
\item  $E_r^{\mathrm{tors}} $ is concentrated in degrees $\leq r-1$ and is
generated by elements in degrees zero and one.
\item The differential $d_r$  annihilates $P_r$ and 
$E_r^{\mathrm{tors}} $; moreover, 
$E_r^{\mathrm{tors}} $ injects into the next page.
\end{enumerate}
\end{propositionAM}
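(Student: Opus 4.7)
The plan is to induct on $r \geq 2$. For the base case ($r = 2$), Theorem~\ref{algclassZ/p} splits each of $\pi_0(M), \pi_1(M)$ into copies of $V_1, V_2, V_3$, and Proposition~\ref{cohomologycalc} identifies the corresponding $E_2$ page as a direct sum of $R_*$-modules: each $V_1$-summand yields a $\mathbf{Z}_p$ in $s$-degree $0$ (with trivial $x_2$-action), each $V_2$-summand yields a copy of $R_*$ (generator in degree $0$), and each $V_3$-summand yields a copy of $R_{*-1}/(p)$ (generator in degree $1$). Declaring $P_2$ to be the $V_1$-contributions, $E_2^{\mathrm{free}}$ the $V_2$- and $V_3$-contributions, and $E_2^{\mathrm{tors}}:=0$ makes the decomposition work, with the torsion concentration condition vacuous. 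The $P_2$-classes are permanent cycles, hence in particular $d_2$-cycles, because $H^0(\mathbf{Z}/p; V_1) = V_1^{\mathbf{Z}/p}$ is spanned by $\sum_g g$, which lies in the image of the norm map, so each such class lifts to $\pi_*(M^{h\mathbf{Z}/p})$ and thus survives to $E_\infty$.

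For the inductive step, assume the decomposition at page $r$. First observe that $d_r$ restricts to an $R_*$-linear endomorphism of $E_r^{\mathrm{free}}$: domain restriction is by hypothesis, and for the codomain one notes that $d_r$ shifts $s$ by $r$, so its image lies in $s \geq r$, disjoint from both $P_r$ (at $s = 0$) and $E_r^{\mathrm{tors}}$ (at $s \leq r - 1$). Each summand of $E_r^{\mathrm{free}}$ is cyclic over $R_*$ with generator in degree $\epsilon \in \{0, 1\}$; the value $d_r(g)$ of such a generator lies in $s$-degree $r + \epsilon$, a region where $E_r^{\mathrm{free}}$ is $p$-torsion. Thus $d_r$ descends mod $p$ to an $\mathbf{F}_p[x_2]$-linear endomorphism of the free $\mathbf{F}_p[x_2]$-module $E_r^{\mathrm{free}}/p$, and Smith normal form over this PID lets me choose new $R_*$-generators $\{g_i\}, \{h_j\}$ such that $d_r(g_i) = c_i\, x_2^{\alpha_i}\, h_{\sigma(i)}$ for some partial injection $\sigma$, units $c_i \in \mathbf{F}_p^\times$, and integer exponents $\alpha_i \geq 0$ forced by bidegree, while unpaired generators lie in $\ker(d_r)$.

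Now compute $\ker(d_r)/\mathrm{im}(d_r)$ summand by summand. A paired $R_*$-source contributes kernel $p R_* g_i \cong \mathbf{Z}_p$ concentrated in degree $0$ with $x_2$-action zero (since $px_2 = 0$ in $R_*$); this becomes a new $\mathbf{Z}_p$-summand of $P_{r+1}$. A paired $R_{*-1}/(p)$-source has zero kernel (being already $p$-torsion) and simply disappears. A paired target, whether of type $R_*$ or $R_{*-1}/(p)$, contributes the quotient $R_* h_j / x_2^{\alpha_i} R_* h_j$ (or the $p$-reduced analogue), an $x_2^{\alpha_i}$-torsion truncated tower generated in the target's generator degree and supported in degrees $\leq r - 1$; these pieces assemble into $E_{r+1}^{\mathrm{tors}}$. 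Unpaired summands survive as $E_{r+1}^{\mathrm{free}}$, and the old $P_r$, $E_r^{\mathrm{tors}}$ inject into $E_{r+1}$ since $d_r$ kills them and $\mathrm{im}(d_r)$ lies in $s \geq r$.

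Finally, the secondary conditions at stage $r+1$ follow by the same bidegree-plus-$R_*$-linearity arguments: any $d_{r+1}$-image of an $x_2$-power-torsion class must itself be $x_2$-power torsion, but its $s$-degree is $\geq r+1$, which is beyond the support ($\leq r$) of $E_{r+1}^{\mathrm{tors}}$ and inside the free-tower region where $x_2$ acts injectively; hence $d_{r+1}$ vanishes on both $P_{r+1}$ and $E_{r+1}^{\mathrm{tors}}$, and $E_{r+1}^{\mathrm{tors}} \hookrightarrow E_{r+2}$ because $\mathrm{im}(d_{r+1}) \subset E_{r+1}^{\mathrm{free}}$. The main obstacle I expect is the Smith-normal-form lifting step: verifying that the $\mathbf{F}_p[x_2]$-linear diagonalization of $d_r \bmod p$ can be lifted through the $\mathbf{Z}_p$-structure on degree-$0$ generators to a genuine $R_*$-linear change of basis on $E_r^{\mathrm{free}}$. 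This should reduce to a Nakayama-style lifting argument, but requires care to preserve the distinction between generators of different degree classes.
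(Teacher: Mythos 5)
Your argument is correct and runs on the same chassis as the paper's proof: induction on $r$, base case read off from Proposition~\ref{cohomologycalc} (with the $V_1$-contributions placed in $P_2$ via the norm), and an inductive step driven by $R_*$-linearity of $d_r$ plus the degree bound on $E_r^{\mathrm{tors}}$. You diverge from the paper in two sub-steps, in both cases supplying more detail. (i) Where the paper merely asserts that the homology of $(E_r^{\mathrm{free}},d_r)$ is $E_r^{\mathrm{tors}}$ plus new copies of $R_*/x_2^k$ and $R_{*-1}/(p)$ truncated by powers of $x_2$, you actually compute it by reducing $d_r|_{E_r^{\mathrm{free}}}$ mod $p$ and diagonalizing the resulting map of graded free $\mathbf{F}_p[x_2]$-modules; this makes the provenance of the new $P$- and torsion-summands explicit. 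The ``main obstacle'' you flag is not one: a degree-preserving basis change cannot mix degree-$0$ and degree-$1$ generators (their degrees differ by $1$ while $x_2$ has degree $2$ and there are no generators in negative degree), so it is a pair of matrices over $\mathbf{F}_p$, one acting on each degree class; the degree-$0$ block lifts to $\mathrm{GL}_n(\mathbf{Z}_p)$ since reduction mod $p$ is surjective on $\mathrm{GL}_n$, and the degree-$1$ generators already span $\mathbf{F}_p$-vector spaces, so nothing needs lifting. (ii) For $d_{r+1}(P_{r+1})=0$ you argue purely by grading: the image is $x_2$-power torsion and sits in cohomological degree $r+1>r$, where the established structure of $E_{r+1}$ leaves only the free towers, on which $x_2$ acts injectively in positive degrees. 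The paper instead invokes the norm map to show the relevant classes $px$ are permanent cycles. Both are valid; the paper's version gives permanence of $P$ directly from a geometric input, while yours keeps the whole induction inside the module-theoretic bookkeeping of the spectral sequence (and, as you can check, would even dispense with the norm at $E_2$, since $H^{>0}(\mathbf{Z}/p;V_1)=0$ makes the same $x_2$-torsion argument apply there).
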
 
\begin{proof} 
This is a direct induction on $r$. At $r = 2$, it is clear in view of 
Proposition \ref{cohomologycalc}: the copies of $R_*$ come from copies of $V_2$, and the
copies of $R_{\ast - 1}/p$ come from copies of $V_3$.
Note that any contribution from an induced representation $V_1$ in $E_2^{0, t}$
must survive to $E_\infty$ (i.e., support no differentials) in view of the norm
map, so we can safely put any such contribution into $P$. 

Suppose we know that the result is true at stage $r$. 
Then, since $d_r$ must 
preserve $E_r^{\mathrm{tors}}$ and since it raises degrees by $r$, it follows
that it acts by zero on $E_r^{\mathrm{tors}}$. 
In particular, $d_r$ does not interact with $E_r^{\mathrm{tors}}$ for reasons
of grading.

Choose a minimal system of generators of degrees zero and one in
$E_r^{\mathrm{free}}$ as a module over $R_*$.  These generators are carried by
$d_r$ to elements of $E_r$ in degrees $r, r+1$, which is above the maximal
degree in $E_r^{\mathrm{tors}}$. It follows that $E_{r+1}^{\mathrm{tors}}$, as the
homology, is $E_r^{\mathrm{tors}}$ plus possibly additional copies of $R_*/x_2^k$ and
$R_*\left\{y_1\right\}/x_2^k y_1$, where $k$ is such that
$E_{r+1}^{\mathrm{tors}}$ has no terms above $r$. 

Finally, if any element $x
\in E_2^{0,t} = (\pi_t M)^{\mathbf{Z}/p}$ in a
summand of $R_*$ supports a differential, we note that $px$ is a permanent
cycle in view of the \emph{norm} map $M \to M^{h\mathbf{Z}/p}$, whose
composition with the natural map $M^{h\mathbf{Z}/p} \to M$ is multiplication
by $\sum_{g \in \mathbf{Z}/p} g$. So, in this case we place the remaining $\mathbf{Z}_p$ summand into $P$. 
This gives the desired inductive step. 
\end{proof}

We are now ready to complete the proof of the main result in a series of lemmas. 

\begin{notation} 
For the rest of the appendix, we will let $\mathcal{C} \subset \rwkp$ denote
the stable subcategory generated by the permutation modules $\Kp,
\Kp[\mathbf{Z}/p]$.
\end{notation}

\begin{lemmaAM} 
\label{firstlm}
Suppose $M \in \rwkp$ and:
\begin{enumerate}
\item $\pi_1(M)  = 0$.
\item The $\mathbf{Z}_p[\mathbf{Z}/p]$-module $\pi_0(M)$ is a sum of copies of
$V_1, V_2$.
\end{enumerate}
Then $M$ is equivalent to a direct sum of copies of $\Kp[\mathbf{Z}/p]$ and 
$\Kp$ (and in particular belongs to $\mathcal{C}$).
\end{lemmaAM}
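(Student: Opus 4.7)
The plan is a two-step decomposition: peel off summands corresponding to $V_1$ using projectivity, then handle the reduced case where $\pi_0$ consists only of trivial representations.

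For the first step, write $\pi_0(M) = V_1^{\oplus a} \oplus V_2^{\oplus b}$. Since $V_1 = \mathbf{Z}_p[\mathbf{Z}/p]$ is a free $\pi_0\Kp[\mathbf{Z}/p]$-module, $\Kp[\mathbf{Z}/p]$ is projective in $\rwkp$ by \S\ref{subsec:proj}, and by the projective-cover principle of \S\ref{subsec:cov} the inclusion $V_1^{\oplus a} \hookrightarrow \pi_0(M)$ lifts to a map $\Kp[\mathbf{Z}/p]^{\oplus a} \to M$ in $\rwkp$. The long exact sequence shows its cofiber $C$ satisfies $\pi_0(C) = V_2^{\oplus b}$ and $\pi_1(C) = 0$. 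To obtain the splitting $M \simeq \Kp[\mathbf{Z}/p]^{\oplus a} \oplus C$, the connecting map $C \to \Sigma\Kp[\mathbf{Z}/p]^{\oplus a}$ must vanish: using the self-duality of the permutation module \eqref{eq:zorro}, this mapping group identifies with $[C \otimes \Kp[\mathbf{Z}/p]^{\oplus a}, \Sigma\Kp]$, and since $C \otimes \Kp[\mathbf{Z}/p]$ is induced from the trivial subgroup, Frobenius reciprocity reduces this to $\pi_{-1}(C)^{\oplus a} = \pi_1(C)^{\oplus a} = 0$ by Bott periodicity.

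For the second step, I assume $\pi_0(M) = V_2^{\oplus b}$ and $\pi_1(M) = 0$, and I aim for an equivalence $\Kp^{\oplus b} \to M$ in $\rwkp$. Since both sides are even with $\pi_0 = V_2^{\oplus b}$, Proposition \ref{thm:coefficients} reduces the task to producing $b$ maps $\Kp \to M$ in $\rwkp$ whose $\pi_0$-images form a basis of $\pi_0(M)^{\mathbf{Z}/p} = \mathbf{Z}_p^b$. Each such map is an element of $\pi_0(M^{h\mathbf{Z}/p})$, and the question becomes whether the edge map of the HFPSS \eqref{HFPSS} surjects onto $\mathbf{Z}_p^b$.

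This surjectivity is the main obstacle. By Proposition \ref{analyszeHFPSS}, each generator of $E_2^{0,0} = \mathbf{Z}_p^b$ lies in a free $R_*$-summand, and the norm map forces its $p$-multiple to be a permanent cycle; naively this gives surjectivity only modulo $p$. To bridge this gap, I would analyze the classifying map $B\mathbf{Z}/p \to B\mathrm{Aut}_{\Kp}(\Kp^{\oplus b})$ of the $\mathbf{Z}/p$-action: the hypothesis of $\pi_0$-triviality factors this map through the identity component $B\mathrm{Aut}_{\Kp}(\Kp^{\oplus b})^{\circ}$, so the possible actions are classified by higher-filtration terms controlled (in the $b=1$ case) by $H^1(B\mathbf{Z}/p; \tau_{\geq 2}\gl_1(\Kp))$. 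An obstruction calculation using Rezk's logarithm, parallel to the proof of the proposition in \S\ref{subsec:one-dimensional}, should show this group vanishes, forcing the action to be trivializable and yielding $M \simeq \Kp^{\oplus b}$.
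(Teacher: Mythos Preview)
Your Step~1 is correct, though the paper does not need it: splitting off the $V_1$-summands via projectivity and the vanishing of $[C,\Sigma\Kp[\mathbf{Z}/p]]$ works as you say.

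The genuine gap is in Step~2. You correctly identify that everything hinges on the surjectivity of the edge map $\pi_0(M^{h\mathbf{Z}/p}) \to \pi_0(M)^{\mathbf{Z}/p}$, but you miss the elementary reason it holds. With $\pi_0(M)$ built only from $V_1$ and $V_2$ (no $V_3$), Proposition~\ref{cohomologycalc} shows $H^s(\mathbf{Z}/p;\pi_0 M)$ vanishes for $s$ odd; combined with $\pi_1(M)=0$, the entire $E_2$-page of \eqref{HFPSS} is supported in bidegrees $(s,t)$ with both $s$ and $t$ even. Any $d_r$ shifts bidegree by $(r,r-1)$, and since $r$ and $r-1$ have opposite parity, every differential has zero source or zero target. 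Hence the spectral sequence degenerates at $E_2$ and the edge map is surjective. This is precisely the paper's ``no room for differentials,'' and it handles the $V_1$- and $V_2$-summands simultaneously, making your Step~1 reduction unnecessary.

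Your proposed workaround via $\gl_1$-obstruction theory is incomplete. You only sketch the $b=1$ case and say the relevant group ``should'' vanish; even granting that (which requires care at small primes beyond what \S\ref{subsec:one-dimensional} provides), there is no argument offered for $b>1$, where $\mathrm{Aut}_{\Kp}(\Kp^{\oplus b}) = \GL_b(\Kp)$ is not governed by $\gl_1$ and the Rezk logarithm does not directly apply. The parity argument above replaces all of this with one line.
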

\begin{proof} 
The hypotheses imply that there is no room for differentials in the homotopy
fixed point spectral sequence, so the map $\pi_0(M^{h\mathbf{Z}/p}) \to
\pi_0(M)^{\mathbf{Z}/p}$ is surjective. Choose a $\mathbf{Z}_p$-basis $x_1,
\dots, x_n$ for $\pi_0(M)^{\mathbf{Z}/p}$ and a
$\mathbf{Z}_p[\mathbf{Z}/p]$-basis $y_1, \dots, y_m$ for a complement (which
is  $\mathbf{Z}_p[\mathbf{Z}/p]$-free) to
$\pi_0(M)^{\mathbf{Z}/p}$ in $\pi_0(M)$.
The $\left\{y_i\right\}$ each determine maps $\psi_i: \Kp[\mathbf{Z}/p] \to M$
carrying $1 \mapsto y_i$ on $\pi_0$. Choose a lift $\widetilde{x}_j \in
\pi_0(M^{h\mathbf{Z}/p})$ for each $x_j$ to obtain maps  $\phi_j: \Kp \to M$
in $\rwkp$. Then the direct sum of the $\psi_i$ and the $\phi_j$ determines an
equivalence
\[ \bigoplus_{i = 1}^n \Kp[\mathbf{Z}/p] \oplus \bigoplus_{j = 1}^m \Kp  \simeq M \]
in the $\infty$-category $\rwkp$, by inspection of homotopy groups.
\end{proof} 

\begin{lemmaAM} 
Suppose $M \in \rwkp$ and:
\begin{enumerate}
\item $\pi_1(M)  = 0$.
\item The $\mathbf{Z}_p[\mathbf{Z}/p]$-module $\pi_0(M)$ is a sum of copies of
$V_1, V_3$.
\end{enumerate}
Then $M \in \mathcal{C}$.
\label{somethingsC'} 
\end{lemmaAM}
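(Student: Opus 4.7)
The approach is to adapt the construction in the proof of Lemma \ref{firstlm}: surject onto $\pi_0(M)$ by a map from a finite sum of copies of $\Kp[\mathbf{Z}/p]$, and arrange that the fiber satisfies the hypotheses of that lemma. The key algebraic observation is that $V_3 = V_1/V_2$ as a cyclic $\mathbf{Z}_p[\mathbf{Z}/p]$-module, so covering a $V_3$ summand by $V_1 = \mathbf{Z}_p[\mathbf{Z}/p]$ (sending $1$ to a generator) produces kernel $V_2$, not another $V_3$. This is what makes the two-step procedure terminate.

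In detail, write $\pi_0(M) \cong V_1^{\oplus a} \oplus V_3^{\oplus b}$. For each $V_1$-summand pick a $\mathbf{Z}_p[\mathbf{Z}/p]$-generator $y_i$; for each $V_3$-summand pick any element $z_j \in \pi_0(M)$ that is a $\mathbf{Z}_p[\mathbf{Z}/p]$-generator of that summand. By the induction--restriction adjunction (since $\Kp[\mathbf{Z}/p] = \mathrm{Ind}_1^{\mathbf{Z}/p}\Kp$), one has $[\Kp[\mathbf{Z}/p], M]_{\rwkp} = \pi_0(M)$, and the map determined by an element $x \in \pi_0(M)$ is the $\mathbf{Z}_p[\mathbf{Z}/p]$-linear map $V_1 \to \pi_0(M)$ sending $1 \mapsto x$. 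Assemble the chosen elements into a map
\[
\phi : P := \Kp[\mathbf{Z}/p]^{\oplus(a+b)} \longrightarrow M
\]
in $\rwkp$.

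By construction, $\pi_0(\phi)$ is a surjection $V_1^{\oplus(a+b)} \twoheadrightarrow V_1^{\oplus a} \oplus V_3^{\oplus b}$. On the first $a$ coordinates it is an isomorphism of $V_1$'s; on the last $b$ coordinates it is the quotient $V_1 \to V_1/V_2 = V_3$, whose kernel is $\mathbf{Z}_p \cdot N \cong V_2$ (where $N = \sum_{g\in\mathbf{Z}/p} g$). Hence $\ker \pi_0(\phi) = V_2^{\oplus b}$. Let $F = \mathrm{fib}(\phi)$. Since $\pi_1(P) = 0$ (as $\Kp[\mathbf{Z}/p]$ has homotopy $\pi_*(\Kp)\otimes \mathbf{Z}_p[\mathbf{Z}/p]$, concentrated in even degrees) and $\pi_1(M) = 0$, the long exact sequence together with the 2-periodicity of underlying $\Kp$-modules gives $\pi_0(F) = V_2^{\oplus b}$ and $\pi_1(F) = 0$.

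Now Lemma \ref{firstlm} applies to $F$ (with the $V_1$-multiplicity equal to zero), so $F$ is equivalent to a direct sum of copies of $\Kp$; in particular $F \in \mathcal{C}$. Since $P \in \mathcal{C}$ as well and $\mathcal{C}$ is stable, $M \simeq \mathrm{cofib}(F \to P)$ lies in $\mathcal{C}$. The only real content is the first paragraph's observation; no obstruction computation or spectral sequence argument is needed, because the specific form of the module $V_3$ guarantees a clean choice of resolution. The main thing to watch is that the lifted map realizes the intended algebraic surjection, which is guaranteed by the adjunction computation of $[\Kp[\mathbf{Z}/p], M]$.
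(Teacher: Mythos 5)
Your proposal is correct and follows essentially the same route as the paper: cover each $V_1$-summand and each $V_3$-summand by a copy of $\Kp[\mathbf{Z}/p]$, observe that the fiber has $\pi_1=0$ and $\pi_0$ a sum of trivial modules $V_2$, and finish by Lemma \ref{firstlm}. Your extra care in checking that an arbitrary cyclic generator of a $V_3$-summand yields kernel $\cong V_2$ is a welcome precision, but the argument is the same.
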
 
\begin{proof} 
Choose a decomposition $\pi_0(M) \simeq \bigoplus_1^m V_1 \oplus \bigoplus_1^n
V_3$.
For each summand $V_1 \subset \pi_0(M)$, choose a map $\phi: \Kp[\mathbf{Z}/p]
\to M$ realizing that summand on homotopy. For each summand $V_3 \subset
\pi_0(M)$, choose a map $\psi: \Kp[\mathbf{Z}/p] \to M$ realizing the surjection
$V_1 \twoheadrightarrow V_3 $ composed with the inclusion $V_3 \subset M$ on
homotopy. Collecting these together, we obtain a map $\bigoplus_1^{n+m}
\Kp[\mathbf{Z}/p] \to M$ whose fiber $F$ has the property that its homotopy groups
are direct sums of $V_1$ (trivial modules). 
By Lemma \ref{firstlm}, we may now conclude that 
$F \in \mathcal{C}$, so $M \in \mathcal{C}$ too. 
\end{proof}

We now come to a key technical lemma that relies on Proposition \ref{analyszeHFPSS}.
\begin{lemmaAM} 
\label{pi1zero}
Suppose $M \in \rwkp$ and:
\begin{enumerate}
\item $\pi_1(M) = 0$. 
\item $\pi_1(M^{h\mathbf{Z}/p}) = 0$.
\item $\pi_0(M)$ is torsion-free.
\end{enumerate}
Then the map $\pi_0(M^{h \mathbf{Z}/p}) \to \pi_0(M)^{\mathbf{Z}/p}$ is
surjective.
\end{lemmaAM}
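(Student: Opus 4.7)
The plan is to analyze the homotopy fixed point spectral sequence (HFPSS)
$$E_2^{s,t} = H^s(\mathbf{Z}/p;\pi_t M) \Longrightarrow \pi_{t-s}(M^{h\mathbf{Z}/p})$$
and show $E_\infty^{0,0} = E_2^{0,0} = \pi_0(M)^{\mathbf{Z}/p}$, which (via the edge composite $\pi_0(M^{h\mathbf{Z}/p}) \twoheadrightarrow E_\infty^{0,0} \hookrightarrow E_2^{0,0}$) is equivalent to the claimed surjectivity. Since no differential can target $(0,0)$, I must show that every class $x \in E_r^{0,0}$ satisfies $d_r(x) = 0$ for all $r \geq 2$. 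By Theorem~\ref{algclassZ/p}, $\pi_0 M \cong V_1^a \oplus V_2^b \oplus V_3^c$; Proposition~\ref{analyszeHFPSS} places the $V_1$-contributions at $E_r^{0,0}$ in the permanent part $P_r$ via the norm construction, so only the $V_2$-generators are at issue. Because $\pi_1 M = 0$, the $E_2$-page is supported in even $t$, and the $\beta$-periodicity from $\pi_2(\Kp)$ identifies the target $E_r^{r,r-1}$ with $E_r^{r,0}$; this vanishes for even $r$ (by parity) and for odd $r$ receives contributions only from $V_3$-summands.

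The main leverage comes from the Greenlees--Sadofsky equivalence. Because $M$ is a $\Kp$-module it is $K(1)$-local, so the norm map $M_{h\mathbf{Z}/p} \xrightarrow{\sim} M^{h\mathbf{Z}/p}$ is an equivalence. A standard calculation in the homotopy orbit spectral sequence shows that $\pi_0(M^{h\mathbf{Z}/p}) \cong \pi_0(M_{h\mathbf{Z}/p})$ has $\mathbf{Q}_p$-rank equal to $\dim_{\mathbf{Q}_p}(\pi_0 M)_{\mathbf{Z}/p}\otimes \mathbf{Q}_p = a+b$, matching the rank of $E_2^{0,0}$. Hence $E_\infty^{0,0} \subseteq E_2^{0,0} \cong \mathbf{Z}_p^{a+b}$ has full $\mathbf{Q}_p$-rank, and the cokernel of our map is finite $p$-torsion.

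To upgrade this rational equality to an integral one, I plan to reduce modulo $p$. Applying $(-)^{h\mathbf{Z}/p}$ to the cofiber sequence $M \xrightarrow{p} M \to M/p$ and using both $\pi_1(M^{h\mathbf{Z}/p}) = 0$ and its $2$-periodic twin $\pi_{-1}(M^{h\mathbf{Z}/p}) = 0$, the resulting long exact sequence collapses to $\pi_0((M/p)^{h\mathbf{Z}/p}) \cong \pi_0(M^{h\mathbf{Z}/p})/p$. I will then run the analogous HFPSS analysis for $M/p$ (whose $\pi_0 = \pi_0(M)/p$ has its own decomposition as an $\mathbf{F}_p[\mathbf{Z}/p]$-module, and whose $\pi_1$ vanishes), matching dimensions modulo $p$. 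Nakayama's lemma then lifts mod-$p$ surjectivity to integral surjectivity.

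The hard part will be this last mod-$p$ step: one must verify that no nontrivial differential $d_r(x)$ emanating from a $V_2$-generator can survive after reducing modulo $p$ without contradicting the Greenlees--Sadofsky rank count. The key algebraic input is the relation $d_r \circ d_r = 0$ applied to the cross-pairing between the degree-zero $V_2$-generators and the degree-one $V_3$-generators in Proposition~\ref{analyszeHFPSS}, combined with the forced nonvanishing of the $V_3$-differentials imposed by $\pi_{\pm 1}(M^{h\mathbf{Z}/p}) = 0$; together these should squeeze out any potential differential on $E_r^{0,0}$.
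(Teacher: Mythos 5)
Your setup is the right one (pass to the edge map of the HFPSS, dispose of the $V_1$-generators by the norm, observe that the target of a $d_r$ out of $E_r^{0,0}$ is nonzero only for $r$ odd and then consists of the classes $x_2^{(r-1)/2}\bar y$ coming from degree-one free generators $\bar y$ of $V_3$-type), but the step that actually kills the differential is missing, and the tools you propose for it do not supply it. The Greenlees--Sadofsky rank count only sees $\pi_0$ rationally, and rationally the spectral sequence degenerates automatically (every $d_r$ out of $E_r^{0,0}$ has $p$-torsion image since $p x$ is a permanent cycle by the norm), so the rank count can never detect a nontrivial differential. The mod-$p$/Nakayama reduction also does not go through as described: $(\pi_0(M)/p)^{\mathbf{Z}/p}$ is in general strictly larger than $\pi_0(M)^{\mathbf{Z}/p}/p$ (already for a single $V_3$ summand), so ``matching dimensions modulo $p$'' is not the statement you need; worse, $\pi_0(M/p)$ is not torsion-free, so Theorem \ref{algclassZ/p} and Proposition \ref{analyszeHFPSS} --- the entire structure theory your analysis rests on --- are unavailable for $M/p$. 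Finally, $d_r\circ d_r=0$ gives nothing here: $d_r$ already vanishes on cohomological degree $r$ for degree reasons, so the ``cross-pairing'' is vacuous.

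The argument that closes the gap is a single observation using hypothesis (2) directly inside the spectral sequence. Suppose $x\in E_r^{0,0}$ is a free degree-zero generator with $d_r(x)=x_2^{(r-1)/2}\bar y\neq 0$ for some degree-one generator $\bar y$ of a summand $R_{*-1}/(p)$. Passing to $E_{r+1}$, that summand is truncated to $R_*\{\bar y\}/(x_2^{(r-1)/2}\bar y,\, p\bar y)$, which is nonzero (it contains $\bar y$ itself) and consists of classes of odd total degree $t-s$, now lying in $E_{r+1}^{\mathrm{tors}}$. By Proposition \ref{analyszeHFPSS}, the torsion part supports no further differentials and injects into every subsequent page, so these classes survive to $E_\infty$ and force $\pi_{-1}(M^{h\mathbf{Z}/p})\cong\pi_1(M^{h\mathbf{Z}/p})\neq 0$ (using $2$-periodicity), contradicting hypothesis (2). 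This is the content you gesture at with ``forced nonvanishing of the $V_3$-differentials,'' but the relevant fact is not that the $V_3$-classes must support differentials; it is that once one of them is \emph{hit}, the classes below it become permanent, never-bounding torsion. Without this step the proof is incomplete.
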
 
\begin{proof} 
Consider the HFPSS for $M$. In this case, no differentials can leave $E_2^{0,0}
= (\pi_0 M)^{\mathbf{Z}/p}$ because any such would force a nontrivial
$\pi_{-1}$ in $M^{h\mathbf{Z}/p}$: by the analysis of the HFPSS in Proposition \ref{analyszeHFPSS}, a
nontrivial differential $d_r$ out of $E_r^{0, 0}$ would create elements in
$E_{r+1}^{\mathrm{tors}}$ with $t - s = -1$ that would then have to be
permanent cycles, leading to nontrivial contributions in $\pi_{-1}$.
This implies the surjectivity statement of the lemma.
\end{proof}

\begin{lemmaAM} 
\label{lotsofthingsmeanC'}
Suppose $M \in \rwkp$ and:
\begin{enumerate}
\item $\pi_1(M) = 0$. 
\item $\pi_1(M^{h\mathbf{Z}/p}) = 0$.
\item $\pi_0(M)$ is torsion-free.
Then $M \in \mathcal{C}$.
\end{enumerate}
\end{lemmaAM}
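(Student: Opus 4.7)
The plan is to build a map from a sum of copies of $\Kp$ to $M$ that peels off precisely the trivial summands of $\pi_0(M)$, so that the cofiber already satisfies the hypotheses of Lemma \ref{somethingsC'} and therefore lies in $\mathcal{C}$. Since $\Kp$ itself lies in $\mathcal{C}$ and $\mathcal{C}$ is stable, this forces $M \in \mathcal{C}$.

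To set up the peeling, I would first apply Theorem \ref{algclassZ/p} to the torsion-free $\mathbf{Z}_p[\mathbf{Z}/p]$-module $\pi_0(M)$, producing a finite decomposition $\pi_0(M) \cong V_1^{\oplus a} \oplus V_2^{\oplus b} \oplus V_3^{\oplus c}$ (finiteness coming from $M$ being perfect). Taking $\mathbf{Z}/p$-invariants, and using the vanishing $V_3^{\mathbf{Z}/p} = 0$ from Proposition \ref{cohomologycalc}, the summand $V_2^{\oplus b}$ is a direct summand of $\pi_0(M)^{\mathbf{Z}/p}$. Choose a $\mathbf{Z}_p$-basis $x_1, \dots, x_b$ for $V_2^{\oplus b}$.

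The key step is to invoke Lemma \ref{pi1zero}: its surjectivity statement lifts each $x_j$ to an element $\widetilde{x}_j \in \pi_0(M^{h\mathbf{Z}/p})$, which is the same data as a map $\phi_j : \Kp \to M$ in $\rwkp$. Assemble these into $\phi : \bigoplus_{j=1}^b \Kp \to M$ and let $C$ be its cofiber. The long exact sequence in homotopy, combined with $\pi_1(M) = 0$ and the vanishing of $\pi_{-1}(\Kp)$, yields $\pi_1(C) = 0$ and $\pi_0(C) \cong V_1^{\oplus a} \oplus V_3^{\oplus c}$. So $C$ satisfies the hypotheses of Lemma \ref{somethingsC'}, giving $C \in \mathcal{C}$, and the cofiber sequence $\bigoplus \Kp \to M \to C$ forces $M \in \mathcal{C}$.

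I expect the substantive obstacle to be exactly the lifting step---ensuring that each chosen $\mathbf{Z}/p$-invariant element of $\pi_0(M)$ is the image on $\pi_0$ of an actual morphism $\Kp \to M$ in $\rwkp$. That is precisely the content of Lemma \ref{pi1zero}, and it rests on the nontrivial structural analysis of the homotopy fixed point spectral sequence carried out in Proposition \ref{analyszeHFPSS}. Once that lifting is available, the rest of the argument is a formal manipulation of the cofiber sequence together with the earlier lemma handling the $V_1, V_3$ case.
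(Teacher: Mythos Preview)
Your proposal is correct and follows essentially the same approach as the paper: lift invariants via Lemma~\ref{pi1zero}, take the cofiber, and invoke Lemma~\ref{somethingsC'}. The only cosmetic difference is that the paper lifts a $\mathbf{Z}_p$-basis of \emph{all} of $\pi_0(M)^{\mathbf{Z}/p}$ (so the cofiber has $\pi_0$ consisting only of copies of $V_3$), whereas you lift only the $V_2^{\oplus b}$ part; either choice feeds equally well into Lemma~\ref{somethingsC'}.
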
 
\begin{proof} 
Choose a basis $x_1, \dots, x_n$ of 
$\pi_0(M)^{h\mathbf{Z}/p}$ as a $\mathbf{Z}_p$-module; by Lemma \ref{pi1zero}, we can find a map 
\[ \phi: \bigoplus_{i=1}^n \Kp \to M \]
which in homotopy induces the map $\mathbf{Z}_p^n \to \pi_0(M)$ corresponding
to the vectors $\left\{x_i\right\}$. 
Form the cofiber $C$ of $\phi$; it suffices to show 
that $C \in \mathcal{C}$. 

However, we see that $\pi_1(C)  = 0$ and that $\pi_0(C)$ is a sum of copies of
$V_1, V_3$ as a $\mathbf{Z}_p[\mathbf{Z}/p]$-module. By Lemma \ref{somethingsC'}, $C$ belongs
to the stable subcategory desired. 
\end{proof} 

\begin{proof}[Proof of Theorem \ref{KUZ/pgenerators}]
Given $M \in \rwk{\mathbf{Z}/p}$. We want to show that $M$ belongs to the
stable subcategory $\mathcal{C}$. 
First, choose a system of generators $x_1, \dots, x_n$ for $\pi_0(M) \oplus
\pi_1(M)$. We obtain a map 
\[ \bigoplus \Kp[\mathbf{Z}/p] \oplus \bigoplus \Sigma \Kp[\mathbf{Z}/p] \to M, \]
which is surjective on $\pi_*$.
Let $N$ be the fiber of this map. Clearly, it suffices to show that $N \in
\mathcal{C}$. The long exact sequence in homotopy shows that $\pi_*(N)$ is
torsion-free as a $\mathbf{Z}_p$-module. 

Next, we  choose a system of generators for $\pi_1(N)$, and obtain a map
\[ \bigoplus \Sigma \Kp[\mathbf{Z}/p] \to N,  \]
which is a surjection on $\pi_1$. Let $C$ be the cofiber of this map, in turn.
It suffices to show that $C \in \mathcal{C}$. However, we note that $\pi_1(C)
=0 $ (by the long exact sequence) and $\pi_0(C)$ is $\mathbf{Z}_p$-torsion-free. 

Finally, we choose a system of generators for $\pi_1(C^{h \mathbb{Z}/p})$, and
obtain a map
\[ \bigoplus \Sigma \Kp \to C,  \]
which induces a surjection in $\pi_1( ( \cdot)^{h \mathbb{Z}/p})$. Let $C'$ be
the cofiber of this map, in turn. It suffices to show that $C' \in
\mathcal{C}$. We see from the long exact sequence in
homotopy groups that $\pi_1(C') = 0$, $\pi_1(C'^{h\mathbb{Z}/p}) = 0$, and
$\pi_0(C')$ is torsion-free.  
However, we can now apply Lemma \ref{lotsofthingsmeanC'} to see that $C' \in \mathcal{C}$. 
\end{proof}

\pagestyle{plain}

\end{document}